\DeclareFontFamily{U}{mathx}{\hyphenchar\font45}
\DeclareFontShape{U}{mathx}{m}{n}{
      <5> <6> <7> <8> <9> <10> gen * mathx
      <10.95> mathx10 <12> <14.4> <17.28> <20.74> <24.88> mathx12
      }{}
\DeclareSymbolFont{mathx}{U}{mathx}{m}{n}
\DeclareMathSymbol{\intop}  {1}{mathx}{"B3}
\newcommand\indep{\independent}
\newcommand\independent{\protect\mathpalette{\protect\independenT}{\perp}}
\def\independenT#1#2{\mathrel{\rlap{$#1#2$}\mkern4mu{#1#2}}}
\newcommand{\wh}{\widehat}
\let\temp\phi
\let\phi\varphi
\let\varphi\temp
\renewcommand{\sec}{\textsection}
\newcommand{\E}{\mathbb{E}}
\newcommand{\given}{\,|\,}  %
\newcommand{\les}{\lesssim}
\DeclareMathOperator*{\argmax}{arg\,max}
\DeclareMathOperator{\BernoulliDist}{Ber}
\DeclareMathOperator{\pa}{pa} %
\DeclareMathOperator{\an}{an} %
\DeclareMathOperator{\nd}{nd} %
\DeclareMathOperator{\de}{de} %
\newcommand{\AND}{\text{ and }}
\newtheorem{theorem}{Theorem}[section]
\newtheorem{lemma}[theorem]{Lemma}
\newtheorem{corollary}[theorem]{Corollary}
\newtheorem{proposition}[theorem]{Proposition}
\newtheorem*{theorem*}{Theorem}
\newtheorem*{lemma*}{Lemma}
\newtheorem*{corollary*}{Corollary}
\newtheorem*{proposition*}{Proposition}
\newtheorem*{conjecture*}{Conjecture}
\theoremstyle{definition}
\newtheorem{definition}{Definition}
\newtheorem*{definition*}{Definition}
\theoremstyle{definition}
\newtheorem{example}{Example}
\theoremstyle{definition}
\newtheorem*{example*}{Example}
\theoremstyle{definition}
\theoremstyle{definition}
\newtheorem*{assumption*}{Assumption}
\theoremstyle{definition}
\newtheorem{condition}{Condition}
\theoremstyle{remark}
\newtheorem{remark}{Remark}
\theoremstyle{remark}
\newtheorem*{remark*}{Remark}
\newcommand{\gr}{G}%
\newcommand{\subgr}[1]{\gr[#1]}
\newcommand{\rv}{X}
\newcommand{\truepr}{P}
\newcommand{\ent}{H}
\newcommand{\mi}{I}
\newcommand{\layer}{L}
\newcommand{\anc}{A}
\newcommand{\estgr}{\wh{\gr}}
\DeclareMathOperator{\MB}{MB}
\newcommand{\mkvbdy}{m}
\newcommand{\mbsize}{M}
\newcommand{\width}{w}
\newcommand{\depth}{r}
\newcommand{\entgap}{\Delta}
\newcommand{\ppsgap}{\xi}
\newcommand{\ppsthresh}{\kappa}
\newcommand{\unentgap}{\eta}
\newcommand{\unentthresh}{\omega}
\newcommand{\ord}{\prec}
\newcommand{\anprime}{\an_j^i(k)}
\DeclareMathOperator{\imp}{im}
\title{Efficient Bayesian network structure learning via local Markov boundary search}
\author[]{Ming Gao}
\author[]{Bryon Aragam}
\affil[]{\emph{University of Chicago}}
\begin{document}

\maketitle

\begin{abstract}%
We analyze the complexity of learning directed acyclic graphical models from observational data in general settings without specific distributional assumptions. 
Our approach is information-theoretic and uses a local Markov boundary search procedure in order to recursively construct ancestral sets in the underlying graphical model. 
Perhaps surprisingly, we show that for certain graph ensembles, a simple forward greedy search algorithm (i.e. without a backward pruning phase) suffices to learn the Markov boundary of each node.
This substantially improves the sample complexity, which we show is at most polynomial in the number of nodes.
This is then applied to learn the entire graph under a novel identifiability condition that generalizes existing conditions from the literature.
As a matter of independent interest, we establish finite-sample guarantees for the problem of recovering Markov boundaries from data. Moreover, we apply our results to the special case of polytrees, for which the assumptions simplify, and provide explicit conditions under which polytrees are identifiable and learnable in polynomial time.
We further illustrate the performance of the algorithm, which is easy to implement, in a simulation study.
Our approach is general, works for discrete or continuous distributions without distributional assumptions, and as such sheds light on the minimal assumptions required to efficiently learn the structure of directed graphical models from data.
\end{abstract}

\section{Introduction}
\label{sec:intro}

Learning the structure of a distribution in the form of a graphical model is a classical problem in statistical machine learning, whose roots date back to early problems in structural equations and covariance selection \citep{wright1921,wright1934,dempster1972}. 
Graphical models such as Markov networks and Bayesian networks impose structure in the form of an undirected graph (UG, in the case of Markov networks) or a directed acyclic graph (DAG, in the case of Bayesian networks). 
This structure is useful for variety of tasks ranging from querying and sampling to inference of conditional independence and causal relationships, depending on the type of graph used. 
In practice, of course, this structure is rarely known and we must rely on \emph{structure learning} to first infer the graphical structure.
The most basic version of this problems asks, given $n$ samples from some distribution $\truepr$ that is represented by a graphical model $\gr=(V,E)$, whether or not it is possible to reconstruct $\gr$.

In this paper, we study the structure learning problem for Bayesian networks (BNs). 
Our main contribution is a fine-grained analysis of a polynomial time and sample complexity algorithm for learning the structure of BNs with potentially unbounded maximum in-degree and without faithfulness. 
In particular, in our analysis we attempt to expose the underlying probabilistic assumptions that are important for these algorithms to work, drawing connections with existing work on local search algorithms and the conditional independence properties of $\truepr$.

\subsection{Contributions}
\label{sec:intro:contrib}

One of the goals of the current work is to better understand the minimal assumptions needed to identify and learn the structure of a DAG from data. Although this is a well-studied problem, existing theoretical work (see Section~\ref{sec:intro:related}) relies on assumptions that, as we show, are not really necessary. In particular, our results emphasize generic probabilistic structure (conditional independence, Markov boundaries, positivity, etc.) as opposed to parametric or algebraic structure (linearity, additivity, etc.), and hence provide a more concrete understanding of the subtle necessary conditions for the success of this approach.

With this goal in mind, we study two fundamental aspects of the structure learning problem: Identifiability and Markov boundary search. On the one hand, we provide a weaker condition for identifiability compared to previous work, and on the other, we exhibit families of DAGs for which forward greedy search suffices to provably recover the parental sets of each node. More specifically, our contributions can be divided into several parts: 
\begin{enumerate}
\item \emph{Identifiability} (Theorem~\ref{thm:ident:unequal2:short}).
We prove a new identifiability result on DAG learning. Roughly speaking, this condition requires that the entropy conditioned on an ancestral set $H(X_{k}\given \anc)$ of each node in $\gr$ is dominated by one of its ancestors.
An appealing feature of this assumption is that it applies to general distributions without parametric or structural assumptions, and generalizes existing ones based on second moments to a condition on the local entropies in the model. We also discuss in depth various relaxations of this and other conditions (Appendix~\ref{app:ext}).
\item \emph{Local Markov boundary search} (Algorithm~\ref{alg:pps}, Proposition~\ref{prop:pps}).
We prove finite-sample guarantees for a Markov boundary learning algorithm that is closely related to the incremental association Markov blanket (IAMB) algorithm, proposed in \citet{tsamardinos2003algorithms}. 
These results also shed light on the assumptions needed to successfully learn Markov boundaries in general settings; in particular, we do not require faithfulness, which is often assumed. 
\item \emph{Structure learning} (Algorithm~\ref{alg:uneq:short}, Theorem~\ref{thm:main}).
We propose an algorithm which runs in $O(d^3\depth\log d)$ time and $O(d^2\depth\log^{3} d)$ sample complexity, to learn an identifiable DAG $\gr$ from samples. Here, $d$ is the dimension and $\depth\le d$ is the depth of the DAG $\gr$, defined in Section~\ref{sec:bg}. 
\item \emph{Learning polytrees} (Theorem~\ref{thm:poly}). As an additional application of independent interest, we apply our results to the problem of learning polytrees \citep{dasgupta1999polytree,rebane2013recovery}.
\item \emph{Generalizations and extensions} (Appendix~\ref{app:ext}). We have included an extensive discussion of our assumptions with many examples and generalizations to illustrate the main ideas. For example, this appendix includes relaxations of the positivity assumption on $\truepr$, the main identifiability condition (Condition~\ref{cond:ident:main}), the PPS condition (Condition~\ref{cond:pps}), and extensions to general, non-binary distributions. We also discuss examples of the conditions and a comparison to the commonly assumed faithfulness condition.
\end{enumerate}
Finally, despite a long history of related work on Markov blanket learning algorithms \citep[e.g.][]{aliferis2010a,pena2007towards,statnikov2013algorithms}, to the best of our knowledge there has been limited theoretical work on the finite-sample properties of IAMB and related algorithms. It is our hope that the present work will serve to partially fill in this gap.

\subsection{Related work}
\label{sec:intro:related}

Early approaches to structure learning assumed faithfulness (for the definition, see Appendix~\ref{app:gm}; this concept is not needed in the sequel), which allows one to learn the Markov equivalence class of $\truepr$ \citep{spirtes1991,heckerman1995,lam1994,friedman1999,chickering2003}. 
Under the same assumption and assuming additional access to a black-box query oracle, \citet{barik2019learning} develop an algorithm for learning discrete BNs. 
\citet{barik2020provable} develop an algorithm for learning the undirected skeleton of $\gr$ without assuming faithfulness.
On the theoretical side, the asymptotic sample complexity of learning a faithful BN has also been studied \citep{friedman1996,zuk2012}. \citet{brenner2013} propose the SparsityBoost score and prove a polynomial sample complexity result, although the associated algorithm relies on solving a difficult integer linear program. 
\citet{chickering2002} study structure learning without faithfulness, although this paper does not establish finite-sample guarantees.
\citet{zheng2018dags,zheng2020learning} transform the score-based DAG learning problem into continuous optimization, but do not provide any guarantees.
\citet{aragam2019globally} analyze the sample complexity of score-based estimation in Gaussian models, although this estimator is based on a nonconvex optimization problem that is difficult to solve.

An alternative line of work, more in spirit with the current work, shows that $\gr$ itself can be identified without assuming faithfulness \citep{shimizu2006,hoyer2009,zhang2009,peters2013}, although these methods lack finite-sample guarantees. Recently, \citet{ghoshal2017ident} translated the equal variance property of \citet{peters2013} into a polynomial-time algorithm for linear Gaussian models with polynomial sample complexity. Around the same time, \citet{park2017} developed an efficient algorithm for learning a special family of distributions with 
\emph{quadratic variance functions}. To the best of our knowledge, these algorithms were the first provably polynomial-time algorithm for learning DAGs that did not assume faithfulness.
See also \citet{ordyniak2013} for an excellent discussion of the complexity of BN learning. The algorithm of \citet{ghoshal2017ident} has since been extended in many ways \citep{ghoshal2017sem,chen2018causal,wang2018nongauss,gao2020npvar}. 

Our approach is quite distinct from these approaches, although as we show, our identifiability result subsumes and generalizes existing work on equal variances.
Additionally, we replace second-moment assumptions with entropic assumptions, which are weaker and have convenient interpretations in terms of conditional independence, which is natural in the setting of graphical models. 
Since many of the analytical tools for analyzing moments and linear models are lost in the transition to discrete models, our work relies on fundamentally different (namely, information-theoretic) tools in the analysis.
Our approach also has the advantage of highlighting the important role of several fundamental assumptions that are somewhat obscured by the linear case, which makes strong use of the covariance structure induced by the linear model.
Finally, we note that although information-theoretic ideas have long been used to study graphical models \citep[e.g.][]{janzing2010causal,janzing2012information,kocaoglu2017entropic,kocaoglu2017greedy}, these works do not propose efficient algorithms with finite-sample guarantees, which is the main focus of our work.

\section{Preliminaries}
\label{sec:bg}

\paragraph{Notation} We use $[d]=\{1,\ldots,d\}$ to denote an index set.
As is standard in the literature on graphical models, we identify the vertex set of a graph $\gr=(V,E)$ with a random vector $\rv=(\rv_{1},\ldots,\rv_{d})$, and in the sequel we will frequently abuse notation by identifying $V=X=[d]$. 
For any subset $S\subset V$, $\subgr{S}$ is the subgraph defined by $S$. 
Given a DAG $\gr=(V,E)$ and a node $\rv_{k}\in V$, $\pa(k)=\{\rv_{j}:(j,k)\in E\}$ is the set of parents, $\de(k)$ is the set of descendants, $\nd(k):=V\setminus\de(k)$ is the set of nondescendants, and $\an(k)$ is the set of ancestors. 
Analogous notions are defined for subsets of nodes in the obvious way. 
A source node is any node $\rv_{k}$ such that $\an(k)=\emptyset$ and a sink node is any node $\rv_{k}$ such that $\de(k)=\emptyset$. 
Every DAG admits a unique decomposition into $r$ layers, defined recursively as follows: $\layer_{1}$ is the set of all source nodes of $\gr$ and $\layer_{j}$ is the set of all sources nodes of $\subgr{V\setminus\cup_{t=0}^{j-1}\layer_{t}}$. 
By convention, we let $\layer_{0}=\emptyset$ and layer width $d_j=|\layer_j|$, the largest width $\max_jd_j=\width$. 
An ancestral set is any subset $\anc\subset V$ such that $\an(\anc)\subset \anc$. 
The layers of $\gr$ define canonical ancestral sets by $\anc_{j}=\cup_{t=0}^{j}\layer_{t}$. 
Finally, the Markov boundary of a node $X_{k}$ relative to a subset $S\subset V$ is the smallest subset $m\subset S$ such that $X_{k}\indep (S\setminus m) \given m$, and is denoted by $\MB(\rv_{k};S)$ or $\mkvbdy_{Sk}$ for short.

The entropy of a discrete random variable $Z$ is given by $\ent(Z)=-\sum_{z}\truepr(Z=z)\log \truepr(Z=z)$, the conditional entropy given $Y$ is $\ent(Z\given Y)=-\sum_{z,y}\truepr(Z=z, Y=y)\log \truepr(Z=z\given Y=y)$, and the mutual information between $Z$ and $Y$ is $\mi(Z;Y)=\ent(Z) - \ent(Z\given Y)$. For more background on information theory, see \citet{cover2012elements}.

\paragraph{Graphical models}

Let $\rv=(\rv_{1},\ldots,\rv_{d})$ be a random vector with distribution $\truepr$. In the sequel, for simplicity, we assume that $\rv\in\{0,1\}^{d}$ and that $\truepr$ is strictly positive, i.e. $\truepr(X=x)>0$ for all $x\in\{0,1\}^{d}$. These assumptions are not necessary; see Appendix~\ref{app:ext} for extensions to categorical random variables and/or continuous random variables and nonpositive distributions.

A DAG $\gr=(V,E)$ is a \emph{Bayesian network} (BN) for $\truepr$ if $\truepr$ factorizes according to $\gr$, i.e.
\begin{align}
\label{eq:defn:bn}
\truepr(\rv)
= \prod_{k=1}^{d} \truepr(\rv_{k}\given\pa(k)).
\end{align}
Obviously, by the chain rule of probability, a BN is not necessarily unique---any permutation of the variables can be used to construct a valid BN according to \eqref{eq:defn:bn}. A \emph{minimal I-map} of $\truepr$ is any BN such that removing any edge would violate \eqref{eq:defn:bn}. The importance of the factorization \eqref{eq:defn:bn} is that it implies that separation in $\gr$ implies conditional independence in $\truepr$, and a minimal I-map encodes as many such independences as possible (although not necessarily all independences). 
For more details, a review of relevant graphical modeling concepts is included, see Appendix~\ref{app:gm}.

The purpose of structure learning is twofold: 1) To identify a unique BN $\gr$ which can be identified from $\truepr$, and 2) To devise algorithms to learn $\gr$ from data. This is our main goal.

\section{Identifiability of minimal I-maps}
\label{sec:ident}

In this section we introduce our main assumption regarding identifiability of minimal I-maps of $\truepr$.

\subsection{Conditions for identifiability}

For $X_k \in V\setminus \anc_{j+1}$, define $\an_j(k):=\an(X_{k})\cap L_{j+1}$, i.e $\an_j(k)$ denotes the ancestors of $X_k$ in $\layer_{j+1}$. By convention, we let $\anc_{0}=\emptyset$. Finally, with some abuse of notation, let $\layer(X_{k})\in[r]$ indicate which layer $X_{k}$ is in, i.e. $\layer(X_{k})=j$ if and only if $X_{k}\in\layer_{j}$.
\begin{condition}
\label{cond:ident:main}

For each $X_{k}\in V$ with $\layer(X_k)\ge 2$ and for each $j=0,\cdots,\layer(X_{k})-2$, there exists $X_i \in \an_{j}(k)$ such that following two conditions hold:
\begin{enumerate}[label=(C\arabic*), wide = 0pt]
\item $\ent(X_i\given \anc_{j}) < \ent(X_k \given \anc_{j})$, \label{cond:uneq:short}
\item $I(X_k;X_i \given \anc_{j}) > 0$. \label{cond:uneq:mi:short}
\end{enumerate}
\end{condition}
We refer $X_i\in\an_j(k)$ satisfying \ref{cond:uneq:short} and \ref{cond:uneq:mi:short} in Condition~\ref{cond:ident:main} as the \textit{important ancestors} for $X_k$, denoted by $\imp_{j}(k)$. Thus, another way of stating Condition~\ref{cond:ident:main} is that 
for each $X_{k}\in V$ with $\layer(X_k)\ge 2$ and for each $j=0,\cdots,\layer(X_{k})-2$, there exists an important ancestor, i.e. $\imp_{j}(k)\ne\emptyset$.

The idea behind this condition is the following: Suppose we wish to identify the $t$th layer of $\gr$. Condition~\ref{cond:ident:main} requires that for every node \emph{after} $\layer_{t}$ (represented by $X_{k}$),
there is at least one node $X_{i}$ in $\layer_{t}$ satisfying \ref{cond:uneq:short} and \ref{cond:uneq:mi:short}. 
The intuition is the entropy (uncertainty) in the entire system is required to increase as time evolves. 

Condition \ref{cond:uneq:short} is the operative condition: By contrast, \ref{cond:uneq:mi:short} is just a nondegeneracy condition that says $X_i\not \indep X_k\given \anc_j$, which will be violated only when $\an_j(k)$ is conditionally independent of $X_k$ given $\anc_{j}$. 
At the population level, \ref{cond:uneq:mi:short} is superficially similar to faithfulness, however, a closer look reveals significant differences.
One example of conditional independence between an entire layer of ancestors and descendants is path-cancellation, where effects through \emph{multiple} paths are neutralized through delicate choices of parameters,
whereas unfaithfulness occurs when there happens to be just one such path cancellation. Moreover, \ref{cond:uneq:mi:short} only applies to a small set of ancestral sets, whereas faithfulness applies to all possible $d$-separating sets.
Not only is this kind of path-cancellation for all of $\an_j(k)$ unlikely, we show in Appendix~\ref{app:lem:poly} that this is essentially the only way \ref{cond:uneq:mi:short} can be violated: If $\gr$ is a poly-forest, then \ref{cond:uneq:mi:short} always holds, see Lemma~\ref{lem:poly}.

In Section~\ref{sec:alg:int}, we will discuss this condition in the context of an algorithm and an example, which should help explain its purpose better. 
Before we interpret this condition further, however,
let us point out why this condition is important: It identifies $\gr$.

\begin{theorem}\label{thm:ident:unequal2:short}
If there is a minimal I-map $\gr$ satisfying Condition~\ref{cond:ident:main}, then $\gr$ is identifiable from $\truepr$. 
\end{theorem}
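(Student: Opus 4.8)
The plan is to show that $\truepr$ determines the layered decomposition $\emptyset=\anc_0\subset\anc_1\subset\cdots$ of $\gr$, and that the layering in turn determines every edge; uniqueness of $\gr$ then follows because the whole construction is a functional of $\truepr$ alone.

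First I would reduce edge-recovery to the layering. In the true graph every parent of $X_k$ lies in a strictly earlier layer, so $\pa(k)\subseteq\anc_{\layer(X_k)-1}$, and since $\anc_{\layer(X_k)-1}$ is an ancestral set not containing $X_k$ it contains no descendant of $X_k$ (nor $X_k$ itself); hence it consists of non-descendants of $X_k$, and the local Markov property gives $X_k\indep(\anc_{\layer(X_k)-1}\setminus\pa(k))\given\pa(k)$. Minimality of the I-map forces $\pa(k)$ to be the minimal such set, i.e. $\pa(k)=\MB(X_k;\anc_{\layer(X_k)-1})$. Because $\truepr$ is strictly positive, the Markov boundary relative to a fixed conditioning set is unique, so once the sets $\anc_j$ are known the parent set of every node—and therefore $\gr$—is pinned down by $\truepr$. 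It thus suffices to recover the sequence $(\anc_j)_j$ from $\truepr$.

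I would recover the layers by induction on $j$, assuming $\anc_j$ is already determined and producing $\layer_{j+1}$ (whence $\anc_{j+1}=\anc_j\cup\layer_{j+1}$). Two population facts drive the step. \emph{(i) Entropy exposes sources:} if $X_k\in V\setminus\anc_{j+1}$ (layer $\ge j+2$), then by \ref{cond:uneq:short} it has an ancestor $X_i\in\an_j(k)\subseteq\layer_{j+1}$ with $\ent(X_i\given\anc_j)<\ent(X_k\given\anc_j)$; in particular any minimizer of $\ent(\cdot\given\anc_j)$ over $V\setminus\anc_j$ must lie in $\layer_{j+1}$. \emph{(ii) Same-layer independence:} since $\anc_{j+1}$ is an ancestral set, the marginal $\truepr_{\anc_{j+1}}$ factorizes according to the induced DAG $\subgr{\anc_{j+1}}$, in which the nodes of $\layer_{j+1}$ are sinks whose parents all lie in $\anc_j$; a one-step d-separation argument then yields $X_a\indep X_b\given\anc_j$ for any distinct $X_a,X_b\in\layer_{j+1}$. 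These facts let me assemble the \emph{entire} layer by a greedy sweep in increasing order of $\ent(\cdot\given\anc_j)$: maintaining a confirmed set $S$, append the current node $X$ to $S$ iff $X\indep Y\given\anc_j$ for every $Y\in S$. If $X\in\layer_{j+1}$ it passes by (ii); if $X$ is in a later layer, then by (i) its important ancestor $X_i\in\layer_{j+1}$ has strictly smaller conditional entropy, so it was processed earlier and, by the induction within the sweep, already placed in $S$, while \ref{cond:uneq:mi:short} gives $I(X;X_i\given\anc_j)>0$ and makes the test fail. Hence $S=\layer_{j+1}$, completing the induction; ranging over all $j$ reconstructs $(\anc_j)_j$, and with the first paragraph, $\gr$.

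The main obstacle is the assembly of the \emph{full} layer rather than a single source: conditional entropy only guarantees that each later node sits strictly above \emph{one} of its layer-$(j+1)$ ancestors, and a descendant may well have smaller conditional entropy than some unrelated same-layer node, so entropy alone cannot separate a layer. The crux is the interplay in the greedy step—\ref{cond:uneq:short} to guarantee the important ancestor is processed first, the ancestral-marginal independence (ii) to keep genuine layer members from being rejected, and \ref{cond:uneq:mi:short} to guarantee later nodes are detectably dependent on an already-confirmed ancestor. Carefully establishing (ii) and verifying that the strict inequality in \ref{cond:uneq:short} orders each node strictly after its important ancestor (including tie handling among equal-entropy nodes) is the delicate part.
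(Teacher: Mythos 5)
Your proposal is correct and follows essentially the same route as the paper: induct on layers, use \ref{cond:uneq:short} to guarantee that every node beyond $\layer_{j+1}$ is preceded in the conditional-entropy order by an important ancestor in $\layer_{j+1}$, use the mutual independence of $\layer_{j+1}$ given $\anc_j$ to keep genuine layer members, and use \ref{cond:uneq:mi:short} to reject later-layer nodes; your ``confirm iff independent of all confirmed nodes'' sweep is just a reformulation of the paper's test-and-mask step, and the final edge recovery via $\pa(k)=\MB(X_k;\anc_{\layer(X_k)-1})$ with uniqueness from positivity matches the paper's Appendix~\ref{app:gm}. The only cosmetic difference is that the paper proves the more general Theorem~\ref{thm:ident:unequal2} (allowing the auxiliary set $\anprime$) and obtains Theorem~\ref{thm:ident:unequal2:short} as the special case $\anprime=\emptyset$.
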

In order to better understand Condition~\ref{cond:ident:main}, let us first compare it to existing assumptions such as equal variances \citep{ghoshal2017ident,peters2013}. 
Indeed, there is a natural generalization of the equal variance assumption to Shannon entropy:
\begin{enumerate}[label=(C\arabic*), wide = 0pt]
\setcounter{enumi}{2}
\item $\ent(\rv_{k}\given \pa(k))\equiv h^*$ is the same for each node $k=1,\ldots,d$. \label{cond:equal}
\end{enumerate}

One reason to consider entropy is due to the fact that every distribution with $\E X^c < \infty$ for some $c>0$ has well-defined entropy, whereas not all distributions have finite variance. Though quoted here,
we will not require \ref{cond:equal} in the sequel; it is included here merely for comparison.
Indeed, the next result shows that this ``equal entropy'' condition is a special case of Condition~\ref{cond:ident:main}:
\begin{lemma}
\label{lem:equal2main}
Assuming \ref{cond:uneq:mi:short}, \ref{cond:equal} implies \ref{cond:uneq:short}. Thus, the ``equal entropy'' condition implies Condition~\ref{cond:ident:main}.
\end{lemma}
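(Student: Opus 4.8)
The plan is to fix a node $X_k$ with $\layer(X_k)\ge 2$ and an index $j\in\{0,\dots,\layer(X_k)-2\}$, take the ancestor $X_i\in\an_j(k)$ supplied by the standing hypothesis \ref{cond:uneq:mi:short} (so that $I(X_k;X_i\given\anc_j)>0$), and show that under \ref{cond:equal} this same $X_i$ satisfies \ref{cond:uneq:short}, i.e. $\ent(X_i\given\anc_j)<\ent(X_k\given\anc_j)$. Since Condition~\ref{cond:ident:main} only asks for a single witness $X_i$ satisfying both \ref{cond:uneq:short} and \ref{cond:uneq:mi:short}, producing such a witness is exactly what is needed, which yields the second sentence of the lemma. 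The two ingredients are (i) the layer decomposition, which pins down where parents and nondescendants sit, and (ii) the local Markov property of the minimal I-map $\gr$.

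First I would record the structural fact that every edge of $\gr$ goes forward in the layering, i.e. $(a,b)\in E\implies\layer(a)<\layer(b)$; this is immediate from the recursive source-removal definition of the layers. Two consequences follow: the parents of any node lie in strictly earlier layers, and every node in a strictly earlier layer is a nondescendant. Applying this to $X_i\in\layer_{j+1}$ gives $\pa(i)\subseteq\anc_j$ and $\anc_j\subseteq\nd(i)$, so the local Markov property yields $X_i\indep(\anc_j\setminus\pa(i))\given\pa(i)$ and hence $\ent(X_i\given\anc_j)=\ent(X_i\given\pa(i))=h^*$ by \ref{cond:equal}. The same reasoning applied to $X_k$, using $\pa(k)\subseteq\anc_{\layer(X_k)-1}$ together with $\anc_{\layer(X_k)-1}\subseteq\nd(k)$, gives $\ent(X_k\given\anc_{\layer(X_k)-1})=\ent(X_k\given\pa(k))=h^*$.

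The final step upgrades the resulting comparison to a strict inequality using \ref{cond:uneq:mi:short}. Since $\anc_j\subseteq\anc_{\layer(X_k)-1}$, the chain rule for entropy gives
\[
\ent(X_k\given\anc_j)-\ent(X_k\given\anc_{\layer(X_k)-1})=I\big(X_k;\anc_{\layer(X_k)-1}\setminus\anc_j\given\anc_j\big).
\]
Because $X_i\in\layer_{j+1}\subseteq\anc_{\layer(X_k)-1}\setminus\anc_j$, monotonicity of conditional mutual information (the chain rule together with nonnegativity) bounds the right-hand side below by $I(X_k;X_i\given\anc_j)$. Combining with the previous paragraph then gives
\[
\ent(X_k\given\anc_j)\;\ge\; h^*+I(X_k;X_i\given\anc_j)\;>\;h^*\;=\;\ent(X_i\given\anc_j),
\]
where strictness is exactly where \ref{cond:uneq:mi:short} is used. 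This is precisely \ref{cond:uneq:short}.

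I expect the only delicate point to be the bookkeeping in the second paragraph: one must verify that the relevant conditioning sets genuinely consist of nondescendants and contain the parents, so that the local Markov property collapses $\ent(\,\cdot\given\anc_\bullet)$ to $\ent(\,\cdot\given\pa(\cdot))$. Everything downstream is routine, as \ref{cond:equal} turns both parent-conditional entropies into the common value $h^*$, and \ref{cond:uneq:mi:short} enters exactly once, to convert the nonstrict inequality $\ent(X_k\given\anc_j)\ge h^*$ into the strict inequality demanded by \ref{cond:uneq:short}.
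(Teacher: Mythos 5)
Your proof is correct, and it takes a genuinely different route from the paper. The paper does not prove Lemma~\ref{lem:equal2main} directly: it derives it as a corollary of Lemma~\ref{lem:uneq2main}, which shows that the ``unequal entropy'' Condition~\ref{cond:uneq1} implies Condition~\ref{cond:ident:main}, together with the observation that \ref{cond:equal} implies Condition~\ref{cond:uneq1}. That argument fixes a topological ordering $\tau$, picks the ancestor $X_i=X_{\tau_s}\in\an_j(k)$ closest to $X_k$ in $\tau$, and compares $\ent(X_i\given X_{\tau_{[1:s-1]}})$ with $\ent(X_k\given X_{\tau_{[1:s-1]}})$ via the inequality \eqref{eq:thm:ident:unequal1} and monotonicity of conditional entropy. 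Your argument instead works entirely with the layer decomposition: you collapse $\ent(X_i\given\anc_j)$ and $\ent(X_k\given\anc_{\layer(X_k)-1})$ to the common value $h^*$ using the local Markov property (parents in earlier layers, ancestral sets consisting of nondescendants), and then write $\ent(X_k\given\anc_j)-h^*$ as $I(X_k;\anc_{\layer(X_k)-1}\setminus\anc_j\given\anc_j)\ge I(X_k;X_i\given\anc_j)>0$ by the chain rule. What your route buys is self-containedness and a clean accounting of where \ref{cond:uneq:mi:short} enters (exactly once, to make the inequality strict); by contrast, the paper's intermediate step ``\ref{cond:equal} implies Condition~\ref{cond:uneq1}'' quietly requires the conditional mutual information on the right-hand side of \eqref{eq:thm:ident:unequal1} to be positive, a nondegeneracy point your direct argument sidesteps. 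What the paper's route buys is the stronger Lemma~\ref{lem:uneq2main} in the same stroke, which covers the unequal-entropy generalization that your argument, relying on both parent-conditional entropies equalling $h^*$, does not immediately give.
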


In fact, \ref{cond:uneq:short} significantly relaxes \ref{cond:equal}. The latter implies that all nodes in $\layer_{j+1}$ have smaller conditional entropy than $X_k$, whereas \ref{cond:uneq:short} only requires this inequality to hold for at least one ancestor $X_i\in\an_j(k)$. Moreover, something even stronger is true: The equal variance condition can be relaxed to \emph{unequal} variances (see Assumption~1 in \cite{ghoshal2017sem}), and we can derive a corresponding ``unequal entropy'' condition. This condition is also a special case of Condition~\ref{cond:ident:main}. We can also construct explicit examples that satisfy Condition~\ref{cond:ident:main}, but neither the equal nor unequal entropy condition. For details, see Appendix~\ref{app:ext:compare}. 

\begin{remark}
Condition~\ref{cond:ident:main} can be relaxed even further: See Appendix~\ref{app:ext:general} and Remark~\ref{rem:gen} for a discussion along with its corresponding algorithm.
\end{remark}

\subsection{Algorithmic interpretation}
\label{sec:alg:int}

The proof of Theorem~\ref{thm:ident:unequal2:short} motivates a natural algorithm to learn the DAG, shown in Algorithm~\ref{alg:uneq:short}. This algorithm exploits the fact that given $\anc_j$, nodes within $\layer_{j+1}$ are mutually independent. Algorithm~\ref{alg:uneq:short} is a layer-wise DAG learning algorithm. For each layer, it firstly sorts the conditional entropies in ascending order $\tau$, then runs a ``Testing and Masking'' (TAM) step to distinguish nodes in $\layer_{j+1}$ from remaining ones ($X_k$): We use $\imp_j(k)$ defined in \ref{cond:uneq:short} to detect and \textit{mask} $X_k \notin \layer_{j+1}$ by \textit{testing} conditional independence. 
By masking, we mean we do not consider the nodes being masked when proceeding over the entropy ordering $\tau$ to identify $\layer_{j+1}$.

\begin{algorithm}[t]
\caption{Learning DAG structure}
\label{alg:uneq:short}
\textbf{Input:} $X=(X_1,\ldots,X_d)$, $\unentthresh$\\
\textbf{Output:} $\widehat{\gr}$.
\begin{enumerate}
\item Initialize empty graph $\widehat{\gr}=\emptyset$ and $j=0$
\item Set $\wh{\layer}_j=\emptyset$, let $\wh{\anc}_{j}=\cup_{t=0}^j \wh{\layer}_t$
\item While $V\setminus \wh{\anc}_j\ne\emptyset$: %
\begin{enumerate}
\item For $k\notin \wh{\anc}_{j}$, estimate conditional entropy $\ent(X_k\given \wh{\anc}_{j})$ by some estimator $\wh{h}_{jk}$. 
\item Initialize $\wh{\layer}_{j+1} = \emptyset$, $\wh{S}_{j+1} = \emptyset$. Sort $\wh{h}_{jk}$ in ascending order and let $\wh{\tau}^{(0)}$ be the corresponding permutation of $V\setminus \wh{\anc}_j$. 
\item For $\ell\in 0,1,2,\ldots$ until $|\wh{\tau}^{(\ell)}|=0$: \textbf{[TAM step]}
\begin{enumerate}
\item  $\wh{\layer}_{j+1}=\wh{\layer}_{j+1}\cup \{\wh{\tau}^{(\ell)}_1\}$.
\item For $k\notin \wh{\anc}_{j}\cup \wh{\layer}_{j+1} \cup \wh{S}_{j+1}$, estimate $\mi(X_k; \wh{\tau}^{(\ell)}_1 \given \wh{\anc}_{j})$. %
\item Set $\wh{S}_{j+1} = \wh{S}_{j+1}\cup \{k: \wh{I}^{(\ell)}_{jk} \ge \unentthresh\}$.
\item $\wh{\tau}^{(\ell+1)}=\wh{\tau}^{(\ell)}\setminus \left(\wh{\layer}_{j+1} \cup \wh{S}_{j+1}\right)$
\end{enumerate}
\item\label{alg:uneq:short:pa} For $k\in \wh{\layer}_{j+1}$, set $\pa_{\wh{\gr}}(k)=\MB(X_{k};\wh{\anc}_{j})$. %
\item Update $j = j+1$.
\end{enumerate}
\item Return $\widehat{\gr}$.
\end{enumerate}
\end{algorithm}

\begin{figure}[t]
    \centering
    \includegraphics[width=1.\linewidth]{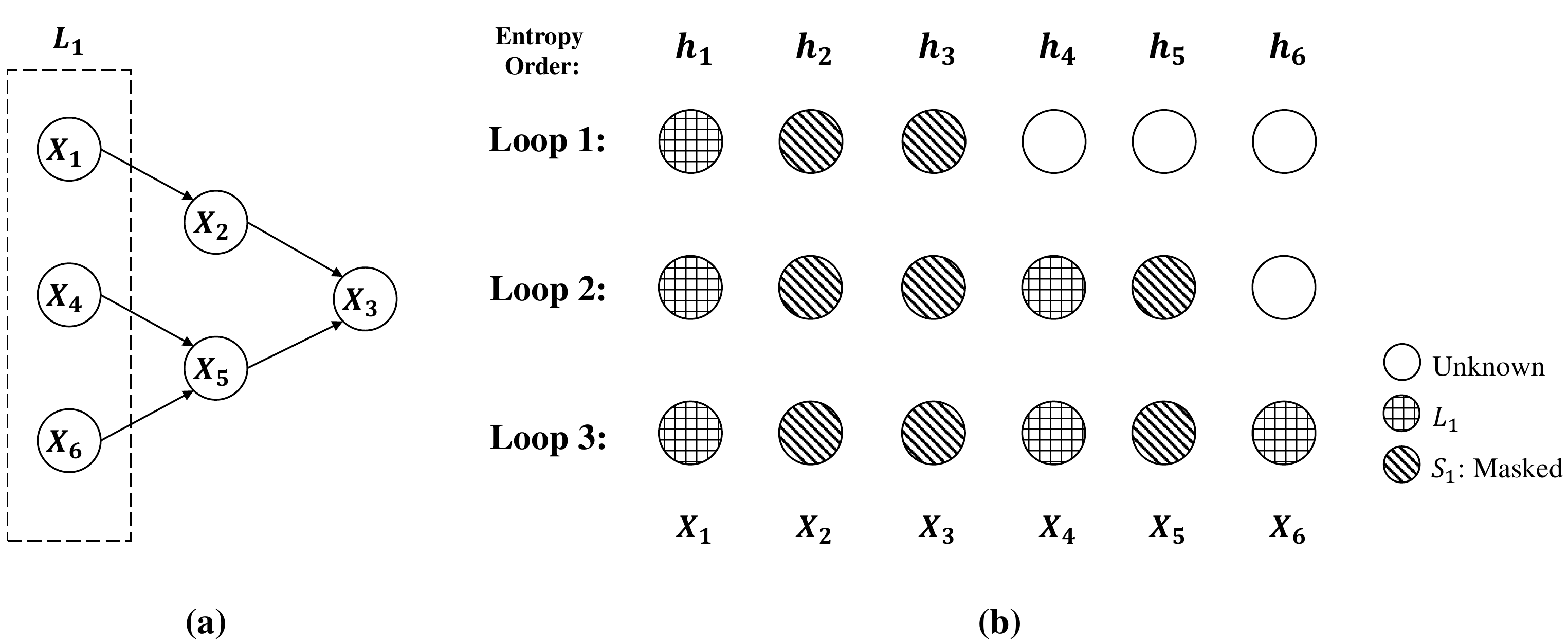}
    \caption{Example for TAM algorithm: (a) True DAG with $\layer_1$ outlined; (b) Status of the algorithm after each loop, denoted by different patterns of nodes.}
    \label{fig:uneq:example}
\end{figure}

In order to see how Algorithm~\ref{alg:uneq:short} works, consider the example shown in Figure~\ref{fig:uneq:example}(a). In the first step with $j=0$, we use marginal entropy (i.e. since $\anc_{0}=\emptyset$) to distinguish $\layer_{1}$ from the remaining nodes. 
Let $H(X_{\ell}):=h_{\ell}$ and assume for simplicity that the nodes are ordered such that $h_1 < h_2 < \cdots < h_6$ (Step 3(b)). 
Apparently, the inequalities that $h_3 < h_{[4:6]}$ and $h_5<h_6$ imply \ref{cond:equal} does not hold here. 
Suppose there are no spurious edges, i.e. descendants and ancestors are dependent. Now we can see conditions in Theorem~\ref{thm:ident:unequal2:short} are satisfied and the important ancestors for $X_2,X_3,X_5$ are $X_1,X_1,X_4$ respectively. The implementation of Algorithm~\ref{alg:uneq:short} is visualized in Figure~\ref{fig:uneq:example}(b): In the first loop, $X_1$ is taken into $\layer_1$ and $X_2,X_3$ are masked due to dependence (Step 3c(iii)). In the second loop, $X_4$ is added to $\layer_{1}$ and then $X_5$ is masked. Finally, with $X_6$ put into $\layer_1$, we have identified $\layer_1$. 

It is worth emphasizing that the increasing order of marginal entropies in this example is purely for simplicity of presentation, and does not imply any information on the causal order of the true DAG. The marginal entropies of nodes need not be monotonic with respect to the topological order of $\gr$.

\section{Local Markov boundary search}
\label{sec:mb}

Algorithm~\ref{alg:uneq:short} assumes that we can learn $\MB(X_{k};\anc_{j})$, the Markov boundary of $X_{k}$ in the ancestral set $\anc_{j}$. This is a well-studied problem in the literature, and a variety of greedy algorithms have been proposed for learning Markov boundaries from data \citep{tsamardinos2003algorithms,tsamardinos2003time,aliferis2010a,pena2007towards,fu2008fast,gao2016efficient}, all based on the same basic idea: Greedily add variables whose association with $X_{k}$ is the highest. In this section, we establish theoretical guarantees for such a greedy algorithm.
In the next section, we apply this algorithm to reconstruct to full DAG $\gr$ via Algorithm~\ref{alg:uneq:short}.

To present our Markov boundary search algorithm, we first need to set the stage. Let $X_{k}\in V$ be any node and $\anc$ an ancestral set of $X_{k}$.
We wish to compute $\MB(X_{k};\anc)$ and $\ent(X_{k}\given\anc)$. An algorithm for this is outlined in Algorithm~\ref{alg:pps}.
In contrast to many existing local search methods for learning Markov boundaries, this algorithm is guaranteed to return the parents of $X_{k}$ in $\gr$. 
More specifically, if $\rv_{k}\in\layer_{j+1}$, then $\MB(X_k;\anc_j)=\pa(\rv_{k})$. For this reason, we refer to Algorithm \ref{alg:pps} as \emph{possible parent selection}, or PPS for short. In fact,  PPS is exactly the forward phase of the well-known IAMB algorithm for Markov blanket discovery \citep{tsamardinos2003algorithms} with conditional mutual information used both as an association measure and as a conditional independence test.

\begin{algorithm}[t]
\caption{Possible Parental Set (PPS) procedure}
\label{alg:pps}
\textbf{Input:} $X=(X_1,\ldots,X_d)$, $k$, $\anc$, $\ppsthresh$ \\
\textbf{Output:} Conditional entropy $\wh{h}$, Markov boundary $\wh{\mkvbdy}$
\begin{enumerate}
\item Initialize $\wh{\mkvbdy}=\emptyset$, loop until $\wh{\mkvbdy}$ does not change:
\begin{enumerate}
    \item\label{alg:pps:mi} For $\ell\in \anc\setminus \wh{\mkvbdy}$, estimate conditional mutual information $I(X_\ell;X_k\given \wh{\mkvbdy})$ by some estimator $\wh{I}_{\ell}$.
    \item Let $\ell^*=\argmax_{\ell\notin \anc\setminus \wh{\mkvbdy}}\wh{I}_{\ell}$, if $\wh{I}_{\ell^*} > \ppsthresh$, set $\wh{\mkvbdy}=\wh{\mkvbdy}\cup \{\ell^*\}$.
\end{enumerate}
\item\label{alg:pps:condent} Estimate conditional entropy $\ent(X_k\given \wh{\mkvbdy})$ by some estimator $\wh{h}$.
\item Return conditional entropy estimation $\wh{h}$ and estimated Markov boundary $\wh{\mkvbdy}$.
\end{enumerate}
\end{algorithm}

Although PPS will always return a valid Markov \emph{blanket}, without a backward phase to remove unnecessary variables added by the forward greedy step, PPS may fail to return a \emph{minimal} Markov blanket, i.e. the Markov boundary. The following condition is enough to ensure no unnecessary variables are included:
\begin{condition}[PPS condition]
\label{cond:pps}
For any proper subset $m\subsetneq \MB(X_k;\anc)$ 
and any node $X_{\ell}\in \anc\setminus \MB(\rv_{k};\anc)$, there exists $X_{c}\in \MB(\rv_{k};\anc)\setminus m$ such that
\[
I(X_k;X_{c}\given m) > I(X_k;X_{\ell}\given m).
\]
\end{condition}
\noindent 
Condition~\ref{cond:pps} requires that nodes in $\MB(\rv_{k};\anc)$ always contribute larger conditional mutual information marginally than those that are not in $\MB(\rv_{k};\anc)$. Thus when we do greedy search to select parents in Algorithm~\ref{alg:pps}, only the nodes in $\MB(\rv_{k};\anc)$ will be chosen. Therefore, with a proper threshold $\ppsthresh$, this set can be perfectly recovered without incorporating any nuisance nodes.

We now present the sample complexity of using PPS to recover Markov boundaries under Condition~\ref{cond:pps}.
We will make frequent use of the Markov boundary and its size:
\begin{align}
\label{eq:defb:mkvbdy}
\mkvbdy_{\anc k}
:= \MB(\rv_{k};\anc), \ \ \ \ \mbsize_{\anc k}:=|\mkvbdy_{\anc k}|.
\end{align}
In particular, by definition we have $\rv_{k}\indep (\anc\setminus\mkvbdy_{\anc k})\given \mkvbdy_{\anc k}$. Under Condition~\ref{cond:pps}, we further define following quantities:
\begin{align}\label{eq:defn:ppsgap}
\widetilde{\entgap}_{\anc k} & := \min_{m\subsetneq \mkvbdy_{\anc k}}\left[\max_{c\in \mkvbdy_{\anc k}\setminus m} I(X_k;X_c\given m) - \max_{\ell\in \anc_{j}\setminus \mkvbdy_{\anc k}} I(X_k;X_{\ell}\given m)\right] > 0, \nonumber \\
 \ppsgap_{\anc k} & := \min_{m\subsetneq \mkvbdy_{\anc k}}\min_{c\in \mkvbdy_{\anc k}\setminus m}I(X_k;X_c\given m)/2 > 0.
\end{align}
$\widetilde{\entgap}_{\anc k}$ is the gap between the mutual information of nodes inside or outside of $\mkvbdy_{\anc k}$. $\ppsgap_{\anc k}$ is the minimum mutual information contributed by the nodes in $\mkvbdy_{\anc k}$. The larger these quantities, the easier the $\MB(X_k;\anc)$ is to be recovered. 
\begin{proposition}
\label{prop:pps}
Fix $k\in V$ and let $\anc$ be any set of ancestors of $X_{k}$.
Suppose Condition~\ref{cond:pps} holds. Algorithm~\ref{alg:pps} is applied for to estimate $\mkvbdy_{\anc k}$ and $\ent(X_k\given \anc)$ with $\ppsthresh \le \ppsgap_{\anc k}$, we have with $t \le \min\left(\ppsthresh, \tilde{\entgap}_{\anc k}/2\right)$
\[
\truepr\left(\wh{\mkvbdy}_{\anc k}=\mkvbdy_{\anc k}, \big|\widehat{\ent}(X_k\given \anc)-\ent(X_k\given \anc)\big|<t\right)\ge 1-(\mbsize_{\anc k}+2)|\anc| \frac{\delta^2_{\mbsize_{\anc k}}}{t^2}.
\]
where $\delta^2_{\mbsize_{\anc k}}$ is the estimation error of conditional entropy defined by \eqref{eq:def:delta} in Appendix~\ref{app:main:prebound}, which depends on $\mbsize_{\anc k}$.
\end{proposition}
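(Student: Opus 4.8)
I would split the argument into a deterministic ``population'' analysis (showing the idealized greedy run recovers $\mkvbdy_{\anc k}$) and a stochastic ``coupling'' step (showing the empirical run tracks the population run whenever the queried estimators are accurate), and then control the coupling failure by a union bound over exactly the estimators the algorithm actually queries.

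\textbf{Step 1 (population correctness).} First I would run the idealized version of Algorithm~\ref{alg:pps} with exact conditional mutual informations in place of the $\wh{I}_\ell$. Suppose the running set equals a proper subset $m\subsetneq\mkvbdy_{\anc k}$. By Condition~\ref{cond:pps} the best node in $\mkvbdy_{\anc k}\setminus m$ strictly dominates every node outside $\mkvbdy_{\anc k}$, with margin at least $\widetilde{\entgap}_{\anc k}>0$, so the exact $\argmax$ lies in $\mkvbdy_{\anc k}\setminus m$; moreover its mutual information is at least $2\ppsgap_{\anc k}\ge 2\ppsthresh>\ppsthresh$, so the greedy step fires and $m$ grows. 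This repeats until $m=\mkvbdy_{\anc k}$, where the defining property $X_k\indep(\anc\setminus\mkvbdy_{\anc k})\given\mkvbdy_{\anc k}$ forces $I(X_k;X_\ell\given\mkvbdy_{\anc k})=0$ for every remaining $\ell$, so nothing exceeds $\ppsthresh$ and the loop halts having produced a deterministic chain $\emptyset=m^{(0)}\subsetneq\cdots\subsetneq m^{(\mbsize_{\anc k})}=\mkvbdy_{\anc k}$ (fixing any tie-breaking rule among interior nodes).

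\textbf{Step 2 (coupling the empirical and population runs).} I would define the good event $\mathcal{E}$ on which every estimator queried \emph{along the population chain} is within $t$ of its target: $|\wh{I}_\ell-I(X_k;X_\ell\given m^{(s)})|<t$ for all $s$ and $\ell\in\anc\setminus m^{(s)}$, together with $|\wh{h}-\ent(X_k\given\mkvbdy_{\anc k})|<t$. By induction on $s$, on $\mathcal{E}$ the empirical run coincides with the population run: the bound $t\le\widetilde{\entgap}_{\anc k}/2$ preserves the ordering ``interior beats exterior'' (a $\widetilde{\entgap}_{\anc k}$-gap survives two $t$-perturbations), so the empirical $\argmax$ stays interior; and $t\le\ppsthresh\le\ppsgap_{\anc k}$ keeps the selected estimate above $\ppsthresh$ while forcing every estimate below $\ppsthresh$ once $m=\mkvbdy_{\anc k}$ (the truth there being $0$). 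Hence the empirical algorithm makes identical decisions, visits the same subsets, queries the same estimators, and terminates with $\wh{\mkvbdy}_{\anc k}=\mkvbdy_{\anc k}$; the terminal entropy estimate is within $t$ by the definition of $\mathcal{E}$.

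\textbf{Step 3 (probability bound) and main obstacle.} Along the population chain the loop executes at most $\mbsize_{\anc k}+1$ times, each querying at most $|\anc|$ conditional mutual informations, plus one terminal conditional-entropy estimate, for a total of at most $(\mbsize_{\anc k}+1)|\anc|+1\le(\mbsize_{\anc k}+2)|\anc|$ estimators, every one conditioning on a set of size at most $\mbsize_{\anc k}$ and hence having estimation error at most $\delta^2_{\mbsize_{\anc k}}$. Chebyshev's (equivalently Markov's) inequality bounds the probability that any single estimator deviates from its target by $t$ or more by $\delta^2_{\mbsize_{\anc k}}/t^2$, and a union bound gives $\truepr(\mathcal{E}^c)\le(\mbsize_{\anc k}+2)|\anc|\,\delta^2_{\mbsize_{\anc k}}/t^2$; since $\mathcal{E}$ implies both conclusions, the claim follows. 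The delicate point is Step~2: the set $\wh{\mkvbdy}$ on which each mutual information is estimated is \emph{itself random}, so a naive union bound would range over all $2^{\mbsize_{\anc k}}$ subsets of $\mkvbdy_{\anc k}$ and wreck the polynomial count. The coupling to the deterministic population chain is exactly what confines the union bound to the $O(\mbsize_{\anc k}|\anc|)$ estimators actually queried; making this airtight needs the inductive robustness above plus a fixed tie-breaking convention (or a genericity assumption) so that accurate estimates force the empirical $\argmax$ to match the population one step for step. A secondary point is checking that the single budget $\delta^2_{\mbsize_{\anc k}}$ from Appendix~\ref{app:main:prebound} simultaneously controls all mutual-information increments (as differences of conditional entropies) and the terminal entropy, uniformly over conditioning sets of size at most $\mbsize_{\anc k}$.
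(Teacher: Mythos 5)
Your proposal is correct and follows essentially the same route as the paper's proof in Appendix~\ref{app:samcom:pps}: a greedy induction showing that under Condition~\ref{cond:pps} each round adds an element of $\mkvbdy_{\anc k}$ whenever the queried estimators are $t$-accurate (using the margins $\widetilde{\entgap}_{\anc k}/2$ and $\ppsgap_{\anc k}$ exactly as you do), followed by Chebyshev bounds from \eqref{eq:def:delta} and a union/product bound over the at most $(\mbsize_{\anc k}+2)|\anc|$ queried quantities. The only place you go beyond the paper is the explicit coupling to a deterministic population chain used to keep the union bound at $O(\mbsize_{\anc k}|\anc|)$ despite the random conditioning sets; note that the paper's proof simply treats the current subset $m$ as fixed at each step, and the residual issue you flag (accurate estimates force the empirical $\argmax$ to be \emph{interior}, but not necessarily equal to the population $\argmax$, so the visited chain can still be random) is an informality shared by, indeed inherited from, the paper's own argument rather than a defect unique to yours.
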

A na\"ive analysis of this algorithm would have a sample complexity that depends on the size of the ancestral set $\anc$; note that our more fine-grained analysis depends instead on the size of the Markov boundary $\MB(X_{k};\anc)$.
We assume with sample size large enough, $\delta^2_{\mbsize_{\anc k}}$ is small such that the right hand side remains to be positive and goes to $1$. 
The proof of Proposition~\ref{prop:pps} is deferred to Appendix~\ref{app:samcom:pps}. 

Condition~\ref{cond:pps} ensures the success of the greedy PPS algorithm. Although this assumption is not strictly necessary for structure learning (see Appendix~\ref{app:ext:pps} for details), it significantly improves the sample complexity of the structure learning algorithm (Algorithm~\ref{alg:uneq:short}). Thus, it is worthwhile to ask when Condition~\ref{cond:pps} holds: We will take up this question again in Section~\ref{sec:dag:pps}.

\section{Learning DAGs}
\label{sec:dag}

Thus far, we have accomplished two important subtasks for learning a DAG: In Section~\ref{sec:ident}, we identified its layer decomposition $\layer=(\layer_{1},\ldots,\layer_{r})$. In Section~\ref{sec:mb}, we showed that the PPS procedure successfully recovers (ancestral) Markov boundaries. Combining these steps, we obtain a complete algorithm for learning $\gr$.
In this section, we study the computational and sample complexity of this algorithm; proofs are deferred to the appendices.

We adopt the notations for Markov boundaries as in \eqref{eq:defb:mkvbdy}:
\begin{align}
\label{eq:defb:mkvbdy:layer}
\mkvbdy_{jk}
:= \MB(\rv_{k};\anc_{j}) \ \ \ \ \mbsize_{jk}:=|\mkvbdy_{jk}|
\end{align}
Therefore, $\rv_{k}\indep (\anc_{j}\setminus\mkvbdy_{jk})\given \mkvbdy_{jk}$.
A critical quantity in the sequel will be the size of the largest Markov boundary $\mkvbdy_{jk}$, which we denote by $\mbsize$: 
\begin{align}
\label{eq:defn:mbsize:largest}
\mbsize
:=\max_{jk}\mbsize_{jk} = \max_{jk}|\mkvbdy_{jk}|.
\end{align}
This quantity depends on the number of nodes $d$ and the structure of the DAG. For example, if the maximum in-degree of $\gr$ is 1, then $\mbsize=1$. A related quantity that appears in existing work is the size of the largest Markov boundary \emph{relative to all of $\rv$}, which may be substantially larger than $\mbsize$.
The former quantity includes both ancestors \emph{and} descendants, whereas $\mkvbdy_{jk}$ only contains ancestors. Analogously, let $\mbsize_j = \max_k\mbsize_{jk}$.

\subsection{Algorithm}
\label{sec:dag:alg}
By combining Algorithms~\ref{alg:uneq:short} and \ref{alg:pps}, we obtain a complete algorithm for learning the DAG $\gr$, which we refer to as the TAM algorithm, short for \emph{Testing and Masking}.
It consists of two parts: 
\begin{enumerate}
\item Learning the layer decomposition $(\layer_{1},\ldots,\layer_{r})$ by minimizing conditional entropy and TAM step (Algorithm \ref{alg:uneq:short});
\item Learning the parental sets and reducing the size of conditioning sets by learning the Markov boundary 
$\mkvbdy_{jk} = \MB(\rv_{k};\anc_{j})$ (Algorithm \ref{alg:pps}).
\end{enumerate}

Specifically, we use PPS (Algorithm~\ref{alg:pps}) to estimate the conditional entropies (Step 3(a)), conditional mutual information (Step 3c(ii)), and the Markov boundary (Step 3(d)).
For completeness, the complete procedure is detailed in Algorithm \ref{alg:uneq:withpps} in Appendix~\ref{app:compalg}.
More generally, Algorithm~\ref{alg:pps} can be replaced with any Markov boundary recovery algorithm or conditional entropy estimator; this highlights the utility of Algorithm~\ref{alg:uneq:short} as a separate meta-algorithm for DAG learning.

One missing piece is the choice of estimators for conditional entropy and mutual information in steps~\eqref{alg:pps:mi} and~\eqref{alg:pps:condent} of the PPS procedure. We adopt the minimax entropy estimator from \citet{wu2016minimax} by treating (without loss of generality) the joint entropy as the entropy of a multivariate discrete variable, although other estimators can be used without changing the analysis. 
The complexity of this estimator is exponential in $\mbsize$ (i.e. since there are up to $2^{\mbsize}$ states to sum over in any Markov boundary $\mkvbdy_{jk}$), so the computational complexity of Algorithm~\ref{alg:pps} is $O(Md2^{M})$. In addition, for the TAM step, there are at most $\max_jd_j$ nodes in each layer to estimate conditional mutual information with remaining at most $d$ nodes, thus this step has computational complexity $O(d\max_j d_j 2^\mbsize)$. 
Thus assuming $\mbsize \lesssim \log d$, 
the overall computational complexity of Algorithm~\ref{alg:uneq:short} is at most $O\big(\depth \times (d\times \mbsize d2^\mbsize + d\times\max_j d_j 2^\mbsize)\big) = O(d^3\depth\log d)$. 
Specifically, for $\depth$ layers, we must estimate the conditional entropy of at most $d$ nodes, and call TAM step once.

\subsection{Main statistical guarantees}
\label{sec:dag:stat}
In order to analyze the sample complexity of Algorithm~\ref{alg:uneq:withpps} under Conditions~\ref{cond:ident:main} and ~\ref{cond:pps}, we introduce following positive quantities:
\begin{align*}
    \entgap&:= \min_j \min_{k\in V\setminus\anc_{j+1}}(\ent(X_k\given\anc_j) - \ent(X_i\given \anc_j) > 0 \\
    \unentgap&:= \min_j\min_{k\in V\setminus\anc_{j+1}}\mi(X_k;X_i\given \anc_j) > 0
\end{align*}    
where $X_i \in \imp_j(k)$ is defined in Condition~\ref{cond:ident:main}. These two quantities are corresponding to the two conditions \ref{cond:uneq:short} and \ref{cond:uneq:mi:short}, which are used to distinguish each layer with its descendants.
Compared to \textbf{strong} faithfulness, which is needed on finite samples, we only require a much smaller, restricted set of information measures to be bounded from zero.
We also adopt the quantities defined in \eqref{eq:defn:ppsgap} by setting $\anc=\anc_j$ and drop the notation $\anc$ such that $\widetilde{\entgap}_{jk}:=\widetilde{\entgap}_{\anc_j k}$ and $\ppsgap_{jk}:=\ppsgap_{\anc_j k}$.

Finally we are ready to state the main theorem about sample complexity of Algorithm~\ref{alg:uneq:short}:
\begin{theorem}
\label{thm:main}
Suppose $\truepr$ satisfies Conditions~\ref{cond:ident:main} and~\ref{cond:pps}, and let $\gr$ be the minimal I-map identified by Theorem~\ref{thm:ident:unequal2:short}.
Let $\estgr$ be output of Algorithm~\ref{alg:uneq:withpps} applied with
$\unentthresh\le \unentgap/2$ and  $\kappa\le\min_{jk}\ppsgap_{jk}$. Denote $\entgap^*_{\unentthresh,\ppsthresh}=\min_{jk}(\entgap/2,\unentthresh,\ppsthresh, \widetilde{\entgap}_{jk}/2)$.
If $\mbsize\lesssim \log d$ and 
\[
n\gtrsim \frac{d^2\depth\log^{3} d}{(\entgap^*_{\unentthresh,\ppsthresh})^2\epsilon},
\]
then $\estgr=\gr$ with probability $1-\epsilon$. 
\end{theorem}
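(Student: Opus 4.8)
The plan is to decouple the stochastic and deterministic parts of the argument by conditioning on a single \emph{good event} $\mathcal E$ on which every conditional entropy and conditional mutual information estimate produced by Algorithm~\ref{alg:pps} throughout the run of the combined procedure is accurate to within the common tolerance $t := \entgap^*_{\unentthresh,\ppsthresh}$ of its population value. The reason for choosing $t=\min_{jk}(\entgap/2,\unentthresh,\ppsthresh,\widetilde{\entgap}_{jk}/2)$ is that this one tolerance is small enough to preserve \emph{simultaneously} the entropy ordering that drives layer selection (governed by $\entgap$), the masking mutual-information test (threshold $\unentthresh\le\unentgap/2$), and the two gaps $\ppsgap_{jk},\widetilde{\entgap}_{jk}$ required by Proposition~\ref{prop:pps} for PPS to recover Markov boundaries. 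I would then prove two separate claims: that $\mathcal E$ forces $\estgr=\gr$ deterministically, and that $\pr(\mathcal E^c)\le\epsilon$ under the stated sample size.

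For the deterministic implication I would induct on the layer index $j$, with hypothesis that after $j$ iterations the algorithm has recovered $\wh\anc_j=\anc_j$ together with the correct parent sets of all nodes in $\anc_j$; this coupling is essential, since the estimates at stage $j{+}1$ condition on $\wh\anc_j$ and are only meaningful once $\wh\anc_j=\anc_j$. Given $\wh\anc_j=\anc_j$, I would argue the TAM step recovers $\layer_{j+1}$ exactly, reusing the mechanism behind Theorem~\ref{thm:ident:unequal2:short}: by \ref{cond:uneq:short} in Condition~\ref{cond:ident:main}, every node $X_k\notin\anc_{j+1}$ has an important ancestor $X_i\in\layer_{j+1}$ with strictly smaller conditional entropy, so on $\mathcal E$ no descendant is ever the minimizer of the estimated entropy before its important ancestor has been selected; when $X_i$ is selected, \ref{cond:uneq:mi:short} ($\mi(X_k;X_i\given\anc_j)\ge\unentgap\ge 2\unentthresh$) guarantees the estimated mutual information exceeds $\unentthresh$ and $X_k$ is masked. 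Conversely, mutual independence of the nodes within $\layer_{j+1}$ given $\anc_j$ gives $\mi=0$ for layer-mates, so on $\mathcal E$ their estimates stay below $\unentthresh$ and no true layer member is masked. Finally, for each recovered $X_k\in\layer_{j+1}$, Proposition~\ref{prop:pps} applies with $\anc=\anc_j$, so PPS returns $\MB(X_k;\anc_j)=\pa(k)$, closing the induction.

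For the probability bound I would union-bound the failure of $\mathcal E$ over all estimator calls. Across the $\depth$ layers, each of the at most $d$ conditional-entropy evaluations invokes PPS, which performs $O(\mbsize\,|\anc_j|)=O(\mbsize d)$ mutual-information estimates and one entropy estimate, while the TAM masking contributes $O(d)$ further estimates per selected node; summing yields $O(\depth\,\mbsize d^2)=O(\depth d^2\log d)$ estimator calls under $\mbsize\lesssim\log d$. Each call fails with probability at most $C\delta^2_{\mbsize}/t^2$ by Chebyshev together with Proposition~\ref{prop:pps}, where $\delta^2_{\mbsize}$ is the mean-squared error of the minimax entropy estimator of \citet{wu2016minimax} on an alphabet of size $2^{\mbsize+1}\lesssim d$; its leading term scales as $\delta^2_{\mbsize}\lesssim (\log d)^2/n$ once $n$ is large enough that the bias term $(2^{\mbsize}/(n\log 2^{\mbsize}))^2$ is dominated. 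Combining, $\pr(\mathcal E^c)\lesssim \depth d^2\log d\cdot(\log^2 d)/(n t^2)=\depth d^2\log^3 d/\big(n(\entgap^*_{\unentthresh,\ppsthresh})^2\big)$, which is at most $\epsilon$ precisely under the stated sample complexity.

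I expect the main obstacle to be the bookkeeping that ties the inductive correctness to the statistical control: one must verify that the \emph{same} event $\mathcal E$ validates the entropy ordering, the masking test, and both PPS gaps at every layer and node, and that conditioning the stage-$(j{+}1)$ estimates on the exactly correct set $\wh\anc_j$ does not inflate their error beyond $\delta^2_{\mbsize}$. A secondary technical point is pinning down the dependence of $\delta^2_{\mbsize}$ on the alphabet size $2^{\mbsize}$ and confirming that the $\log^3 d$ rate arises from combining the $O(\log d)$ PPS depth with the $O(\log^2 d)$ variance of the Wu--Yang estimator, rather than from a looser naive bound scaling with $|\anc_j|$ instead of $\mbsize$.
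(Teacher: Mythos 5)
Your proposal is correct and follows essentially the same route as the paper: a layer-wise induction in which correct recovery of $\wh\anc_j=\anc_j$ licenses an application of Proposition~\ref{prop:pps} for the PPS calls and the mechanism of Theorem~\ref{thm:ident:unequal2:short} for the TAM step, with the failure probability accumulated over the $O(\depth\,\mbsize\,d^2)$ estimator calls and $\delta^2_{\mbsize}\lesssim\log^2 d/n$ under $\mbsize\lesssim\log d$. The only cosmetic difference is that you package the statistical control as a single good event with a union bound, whereas the paper writes $\truepr(\wh\gr=\gr)=\prod_j\truepr(\mathcal E_j\given\mathcal E_{j-1})$ and expands the product; the resulting bound and the choice of tolerance $t=\entgap^*_{\unentthresh,\ppsthresh}$ are identical.
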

\noindent 
Up to log factors, the sample complexity scales quadratically with the number of nodes $d^2$ and linearly in the depth $\depth\le d$. 
In the worst case, this is cubic in the dimension. 
For example, if $\gr$ is a Markov chain then $\mbsize=1$ and $\depth=d$, thus it suffices to have $n=\Omega(d^3\log^3 d)$.
Comparatively, most of previous work \citep{chen2018causal,ghoshal2018learning,wang2018nongauss} only consider linear or parametric models. 
One recent work that provides nonparametric guarantees without assuming faithfulness is \citep{gao2020npvar}, who show that in general, $\Omega((dr/\epsilon)^{1+d/2})$ samples suffice to recover the topological ordering under an equal variance assumption similar in spirit to \ref{cond:equal}. Unlike the current work, which considers exact recovery of the full graph $\gr$, \citep{gao2020npvar} does not include the reduction to Markov boundary search that is crucial to our exact recovery results.

In fact, as the proof indicates, the sample complexity of our result also depends exponentially on $\mbsize$.
This explains the assumption that $M\les\log d$; the logarithmic assumption is analogous to sparsity and results from not making any parametric assumptions on the model. Since our setting is fully nonparametric, exponential rates in the dimension $d$ are to be expected. 
Under stronger parametric assumptions, these exponential rates can likely be avoided.
A detailed analysis of the dependency on $\mbsize$ can be found in the proofs, which are deferred to Appendix \ref{app:main}.

Finally, in practice the quantities $\unentgap,\ppsgap_{jk}$ needed in Theorem~\ref{thm:main} may not be known, which hinders the choice of tuning parameters $\unentthresh,\ppsthresh$. In Appendix~\ref{app:tuning} (see Theorem~\ref{thm:tuning}) we discuss the selection of these tuning parameters in a data-dependent way.

\subsection{Application to learning polytrees}
\label{sec:dag:pps}

Learning polytrees is one of the simplest DAG learning problems. This problem was introduced in \citet{rebane2013recovery} and in a seminal work, \citet{dasgupta1999polytree} showed that learning polytrees is NP-hard in general. In this section, we note that when the underlying DAG is a polytree (or more generally, a polyforest), Condition~\ref{cond:pps} is always satisfied, and therefore we have an efficient algorithm for learning identifiable polyforests that satisfy Condition~\ref{cond:ident:main}.

Recall that a polyforest is a DAG whose skeleton (i.e. underlying undirected graph) has no cycles.
\begin{theorem}
\label{thm:poly}
If $\truepr$ satisfies \eqref{eq:defn:bn} for some polyforest $\gr$, then Condition~\ref{cond:pps} is satisfied. 
\end{theorem}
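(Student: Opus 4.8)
The plan is to reduce Condition~\ref{cond:pps} to a data-processing inequality along the unique paths of the polyforest, combined with a strict-positivity (nondegeneracy) statement that rules out path cancellation.

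First I would pin down the Markov boundary. Since $\anc$ is ancestrally closed and $X_k\notin\anc$, every element of $\anc$ is a nondescendant of $X_k$, i.e. $\anc\subseteq\nd(X_k)$ (a descendant lying in $\anc$ would force $X_k\in\anc$). The local Markov property then gives $X_k\indep(\anc\setminus\pa(X_k))\given\pa(X_k)$, so $\pa(X_k)$ is a Markov blanket of $X_k$ in $\anc$; minimality of the I-map together with positivity makes each parent necessary, whence $\MB(X_k;\anc)=\pa(X_k)\cap\anc=:\mkvbdy$ (equal to $\pa(X_k)$ in the case $\pa(X_k)\subseteq\anc$ relevant to Algorithm~\ref{alg:uneq:short}). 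Thus in Condition~\ref{cond:pps} the candidate $X_c$ ranges over parents not already in $m$, while $X_\ell$ is a non-parent ancestor.

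Next I would exploit the unique-path structure. Fix $m\subsetneq\mkvbdy$ and $X_\ell\in\anc\setminus\mkvbdy$, and let $\pi$ be the unique skeleton path joining $X_\ell$ to $X_k$. Its terminal edge cannot reach $X_k$ through a child of $X_k$: such a path would contain a collider whose descendants all lie in $\de(X_k)$, hence outside $m\subseteq\nd(X_k)$, so it is $d$-blocked given $m$. Therefore the terminal edge is $X_c\to X_k$ for some parent $X_c$. If $\pi$ is blocked by $m$ (e.g. $X_c\in m$) then $X_k\indep X_\ell\given m$, so $I(X_k;X_\ell\given m)=0$ and it suffices to exhibit any parent outside $m$ with positive conditional mutual information. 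Otherwise $X_c\in\mkvbdy\setminus m$, and since $X_c$ is a non-collider on $\pi$, conditioning on $m\cup\{X_c\}$ blocks $\pi$, giving $X_k\indep X_\ell\given m,X_c$. The chain rule then yields the identity $I(X_k;X_c\given m)-I(X_k;X_\ell\given m)=I(X_k;X_c\given m,X_\ell)\ge 0$, reducing the desired comparison to strict positivity of the residual term.

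The crux — and the step I expect to be the main obstacle — is upgrading these inequalities to strict ones, i.e. showing $X_k\not\indep X_c\given m,X_\ell$ (and, in the blocked case, $X_k\not\indep X_c\given m$ for some parent $X_c\notin m$). This is where faithfulness would ordinarily be invoked, but it is precisely what we wish to avoid: $d$-connection alone does not force dependence. Here the polyforest structure is essential, since the only path between $X_c$ and $X_k$ is the single edge $X_c\to X_k$, so there is no second route whose contribution could cancel the direct one; this is the same cancellation-free mechanism established in Lemma~\ref{lem:poly} for Condition~\ref{cond:ident:main}. I would therefore argue, using positivity together with the absence of any alternative $X_c$–$X_k$ path, that the parent–child dependence persists under the relevant conditioning sets, giving $I(X_k;X_c\given m,X_\ell)>0$ and completing the verification. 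The delicate point in this last step is to track exactly which conditioning sets arise and to ensure the argument accounts for the \emph{other} (unconditioned) parents of $X_k$, whose presence — via colliders at $X_k$ — is the only mechanism through which the direct dependence on $X_c$ could otherwise be washed out.
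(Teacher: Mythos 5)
Your local analysis (the chain-rule identity $I(X_k;X_c\given m)-I(X_k;X_\ell\given m)=I(X_k;X_c\given m,X_\ell)$ after establishing $X_k\indep X_\ell\given m,X_c$, the blocking of child-side paths by colliders in $\de(X_k)$, and the observation that strictness must come from the absence of path cancellation) matches the paper's argument. But there is a genuine gap at the very first step: the identification $\MB(X_k;\anc)=\pa(X_k)\cap\anc$ is false whenever $\pa(X_k)\not\subseteq\anc$, and this is not a corner case --- Algorithm~\ref{alg:uneq:withpps} invokes PPS with $\anc=\wh\anc_j$ for \emph{every} $k\notin\wh\anc_j$, not only for $k\in\layer_{j+1}$, so Theorem~\ref{thm:poly} must cover nodes several layers deep whose parents are not yet in $\anc$. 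Concretely, for the chain $Z\to W\to U\to X_k$ together with $V\to X_k$ and $\anc=\{Z,W,V\}$, one has $\MB(X_k;\anc)=\{W,V\}$: the non-parent $W$ belongs to the boundary, the parent $U$ does not (it is not even in $\anc$), and the ``terminal edge'' of the path from $Z$ to $X_k$ is $U\to X_k$ with $U\notin\mkvbdy$, so your case split ``either $\pi$ is blocked by $m$ or its terminal parent lies in $\mkvbdy\setminus m$'' collapses. Your own justification of the boundary claim also fails here: the local Markov property gives $X_k\indep\nd(k)\setminus\pa(k)\given\pa(k)$, which requires conditioning on \emph{all} of $\pa(k)$ and so says nothing about blankets inside $\anc$ when some parent is missing from $\anc$. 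The paper handles this by proving instead that every member of $\MB(X_k;\anc)$ admits a directed path to $X_k$ (its Lemma in Appendix~G), and then splitting on whether the unique $X_\ell$--$X_k$ path passes through a boundary member; your argument would need to be restructured around that weaker, correct characterization.

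A secondary issue is that the step you correctly flag as the crux --- strict positivity of $I(X_k;X_c\given m,X_\ell)$, and of $I(X_k;X_c\given m)$ for some $X_c\in\mkvbdy\setminus m$ in the blocked case --- is left as a sketch (``I would therefore argue\ldots''). The paper supplies this via Lemma~\ref{lem:mi:mb}, a Markov-boundary analogue of Lemma~\ref{lem:mi} asserting $I(X_k;\mkvbdy\given \anc')>0$ whenever $\mkvbdy\setminus\anc'\ne\emptyset$, whose proof rests on minimality of the boundary and positivity rather than on the single-edge picture; without some such lemma the verification is incomplete, since $d$-connection alone does not yield strict dependence.
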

\noindent 
The detailed proof can be found in Appendix~\ref{app:poly}. As a result, it follows immediately (by combining Theorems~\ref{thm:main} and~\ref{thm:poly} with Lemma~\ref{lem:poly}) that any polytree satisfying \ref{cond:uneq:short} is learnable.

The crucial property of a poly-forest used in proving Theorem~\ref{thm:poly} is that there exists at most one directed path between any two nodes in the graph. 
However, the existence of multiple directed paths between two nodes does not necessarily imply that Condition~\ref{cond:pps} will fail: There are concrete examples of graphs satisfying Condition~\ref{cond:pps} with arbitrarily many paths between two nodes. An example is given by the DAG $\gr=(V,E)$ with $V=(Z,X_{1},\ldots,X_{k},Y)$ such that $Z\to X_{i}\to Y$ for each $i$. 
Thus, this assumption holds more broadly than suggested by Theorem~\ref{thm:poly}.
It is an interesting problem for future work to study this further.

\section{Experiments}
\label{sec:exp}
We conduct a brief simulation study to demonstrate the performance of Algorithm~\ref{alg:uneq:short} and compare against some common baselines: PC \citep{spirtes1991}, GES \citep{chickering2003}. We focus on the fully discrete setting. All implementation details can be found in Appendix~\ref{app:exp}. The code implementing TAM algorithm is available at \url{https://github.com/MingGao97/TAM}. We stress that the purpose of these experiments is simply to illustrate that the proposed algorithm can be implemented in practice, and successfully recovers the edges in $\gr$ as predicted by our theoretical results.

We evaluate the performance of aforementioned algorithms by Structural Hamming distance (SHD): This is a standard metric for DAG learning that counts the total number of edge additions, deletions, and reversals needed to convert the estimated graph into the reference one. Since PC and GES both return a CPDAG that may contain undirected edges, we evaluate these algorithms favourably by assuming correct orientation for undirected edges wherever they are present.

We simulate DAGs from three graph types: Poly-trees (Tree) , Erd\"os-R\'enyi (ER), and Scale-Free (SF) graphs. As discussed in Section~\ref{sec:dag:pps}, poly-tree models are guaranteed to satisfy Condition~\ref{cond:pps}, whereas general DAGs such as ER or SF graphs are not, so this provides a test case for when this condition may fail. We generate data according to two models satisfying the ``equal entropy'' condition \ref{cond:equal}. As discussed in Appendix~\ref{app:ext:compare}, \ref{cond:equal} implies our main identifiability Condition~\ref{cond:ident:main}.

\begin{itemize}
    \item ``Mod'' model (MOD): $X_k= (S_k\mod2)^{Z_k}\times (1 - (S_k\mod2))^{1-Z_k}$ where $S_k = \sum_{\ell\in\pa(k)} X_{\ell}$ with $Z_k\sim \BernoulliDist(0.2)$
    \item Additive model (ADD): $X_k = \sum_{\ell\in\pa(k)}X_\ell + Z_k$ with $Z_k\sim\BernoulliDist(0.2)$
\end{itemize}

Figure~\ref{fig:exp:main} (left, right) illustrates the performance in terms of structural Hamming distance (SHD) between the true graph and the estimated graph. As expected, our algorithm successfully recovers the underlying DAG $\gr$ and performs comparably to PC and GES, which also perform quite well.
We stress that the current experiments are used for simple illustration thus not well-optimized compared to existing algorithms or fully exploited the main condition either. The relatively good performance of PC/GES partially comes from the fact that our synthetic models are all in fact faithful, see Appendix~\ref{app:unfaith} for further discussion.

\begin{figure}[t]%
\includegraphics[width=1.\textwidth]{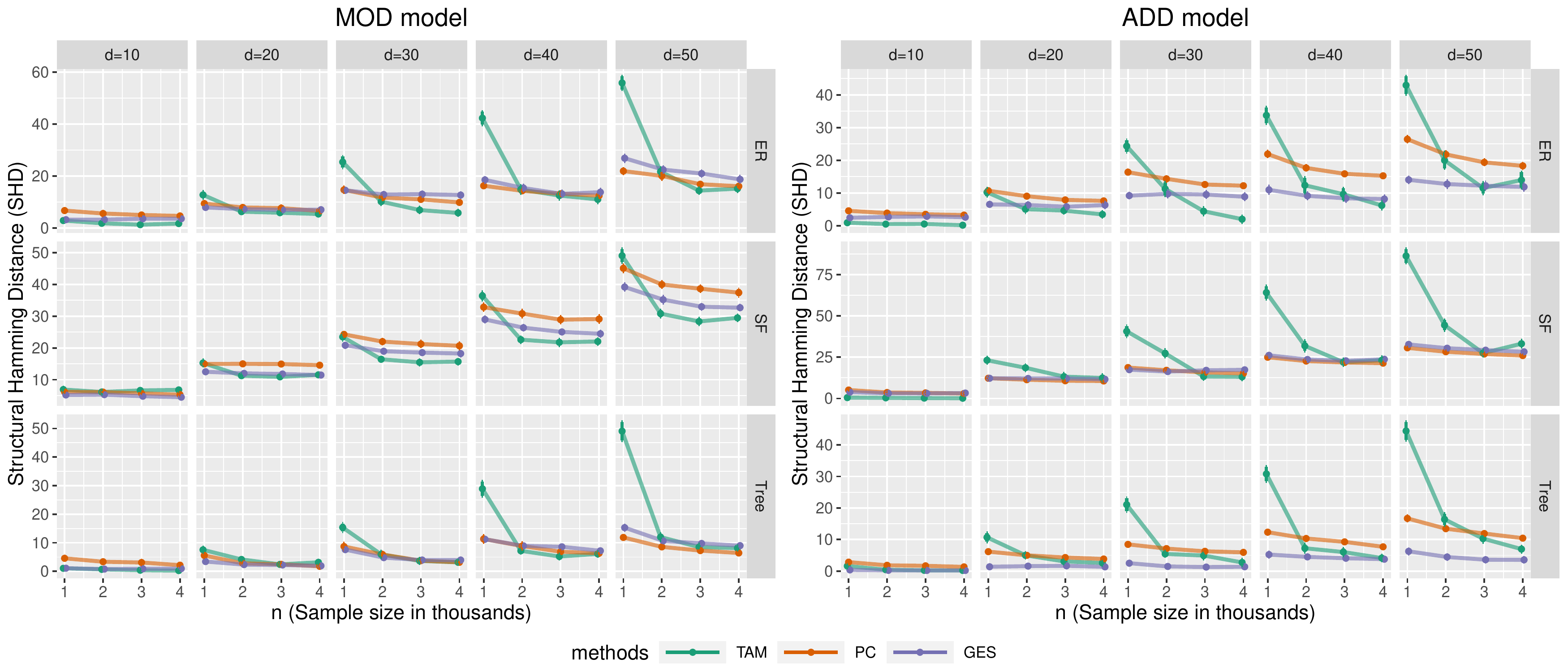}
\caption{SHD vs sample size $n$ for different dimensions and graph types. Left panel is for MOD model; Right panel is for ADD model.}
\label{fig:exp:main}
\end{figure}

\section{Conclusion}
\label{sec:conc}

The main goal of this paper has been to better understand the underlying assumptions required for DAG models to be estimated from data. To this end, we have provided a new identifiability result along with a learning algorithm, which turns out to generalize existing ones, and analyzed a greedy, local search algorithm for discovering Markov boundaries. This local search algorithm can be used to provably learn the structure of a minimal I-map in polynomial time and sample complexity as long as the Markov boundaries are not too large. 
Nonetheless, there are many interesting directions for future work. Perhaps the most obvious is relaxing the logarithmic dependence on $d$ in $\mbsize$. 
It would also interesting to investigate lower bounds on the sample complexity of this model, as well as additional identifiability conditions.

\subsection*{Acknowledgements}
We thank the anonymous reviewers for their helpful comments in improving the manuscript.
B.A. was supported by NSF IIS-1956330, NIH R01GM140467, and the Robert H. Topel Faculty Research Fund at the University of Chicago Booth School of Business.
All statements made are solely due to the authors and have not been endorsed by the NSF.

\bibliography{foo-arxiv}
\bibliographystyle{abbrvnat}

\appendix 

\section{Complete algorithm description}
\label{app:compalg}

For completeness and reproducibility, the full TAM algorithm combining Algorithms~\ref{alg:uneq:short} and~\ref{alg:pps} is detailed in Algorithm~\ref{alg:uneq:withpps}.

\begin{algorithm}[!h]
\caption{TAM algorithm for learning DAGs}
\label{alg:uneq:withpps}
\textbf{Input:} $X=(X_1,\ldots,X_d)$, $\unentthresh$, $\ppsthresh$\\
\textbf{Output:} $\widehat{\gr}$.
\begin{enumerate}
\item Initialize empty graph $\widehat{\gr}=\emptyset$ and $j=0$.
\item Set layer $\wh{\layer}_0=\emptyset$, let $\wh{\anc}_{j}=\cup_{t=0}^j \wh{\layer}_t$.
\item While $V\setminus \widehat{\anc}_j\ne\emptyset$:
\begin{enumerate}
\item For $k\notin \wh{\anc}_{j}$, apply PPS($X,k,\wh{\anc}_{j},\ppsthresh$) (See Algorithm~\ref{alg:pps}) to obtain the estimated Markov boundary $\wh{\mkvbdy}_{jk}$ along with an estimate $\wh{h}_{jk}$ of the corresponding conditional entropy $\ent(X_k\given \wh{\anc}_{j})$. 
\item Initialize $\wh{\layer}_{j+1} = \emptyset$, $\wh{S}_{j+1} = \emptyset$. Sort $\wh{h}_{jk}$ in ascending order and let $\wh{\tau}^{(0)}$ be the corresponding permutation of $V\setminus \anc_j$. 
\item For $\ell\in 0,1,2,\ldots$ until $|\wh{\tau}^{(\ell)}|=0$: \textbf{[TAM step]}
\begin{enumerate}
\item  $\wh{\layer}_{j+1}=\wh{\layer}_{j+1}\cup \{\wh{\tau}^{(\ell)}_1\}$.
\item For $k\notin \wh{\anc}_{j}\cup \wh{\layer}_{j+1} \cup \wh{S}_{j+1}$, estimate $\mi(X_k; \wh{\tau}^{(\ell)}_1 \given \wh{\mkvbdy}_{jk})$ by some estimator $\wh{I}^{(\ell)}_{jk}$
\item Set $\wh{S}_{j+1} = \wh{S}_{j+1}\cup \{k: \wh{I}^{(\ell)}_{jk} \ge \unentthresh\}$.
\item $\wh{\tau}^{(\ell+1)}=\wh{\tau}^{(\ell)}\setminus \left(\wh{\layer}_{j+1} \cup \wh{S}_{j+1}\right)$
\end{enumerate}
\item\label{alg:uneq:withpps:pa} For $k\in \wh{\layer}_{j+1}$, set $\pa_{\wh{\gr}}(k)=\wh{\mkvbdy}_{jk}$.
\item Update $j=j+1$.
\end{enumerate}
\item Return $\widehat{\gr}$.
\end{enumerate}
\end{algorithm}

\section{Graphical model background}
\label{app:gm}

In this appendix, we recall some basic facts regarding graphical models that are used throughout the proofs. This section will also help to explain the importance of the positivity assumption on $\truepr$, as well as the concept of faithfulness. For more background on graphical models, see \citet{lauritzen1996,koller2009}.

\paragraph{Uniqueness of Markov boundaries}
The \emph{Markov blanket} of a node $X_{k}$ relative to some subset $S\subset V$ is any subset $m\subset S$ such that $X_{k}\indep (S\setminus m)\given m$. A Markov boundary  is a minimal Markov blanket, i.e. a Markov blanket $m$ such that no proper subset of $m'\subsetneq m$ satisfies $X_{k}\indep (S\setminus m') \given m'$. Neither the Markov blanket nor the Markov boundary are unique in general. A key fact regarding Markov boundaries is that when $\truepr$ is strictly positive, they are unique. Recall that we denote the Markov boundary of $X_{k}$ relative to $S$ by $\MB(X_{k};S)$.
\begin{lemma}
\label{lem:mb:unique}
If $\truepr(X=x)>0$ for all $x\in\{0,1\}^{d}$, then for any $S\subset V$, the Markov boundary $\MB(X_{k};S)$ exists and is unique.
\end{lemma}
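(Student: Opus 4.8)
The plan is to reduce the claim to the \emph{intersection property} of conditional independence, which is the one graphoid axiom that genuinely requires strict positivity, and then exploit it to show that the family of Markov blankets of $X_k$ relative to $S$ is closed under intersection. Once that closure is established, uniqueness of the minimal element is automatic.

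First I would dispose of existence. The full set $S$ is trivially a Markov blanket, since $X_k \indep \emptyset \given S$ holds vacuously, so the collection of Markov blankets relative to $S$ is nonempty. Because $S$ is finite, this collection is a finite family of finite sets and hence contains at least one element that is minimal with respect to inclusion; any such element is by definition a Markov boundary. This part uses nothing beyond finiteness.

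The heart of the argument is to show that if $m_1$ and $m_2$ are both Markov blankets of $X_k$ relative to $S$, then so is $m_1 \cap m_2$. I would partition $S$ into $A = m_1 \setminus m_2$, $B = m_2 \setminus m_1$, $C = m_1 \cap m_2$, and $D = S \setminus (m_1 \cup m_2)$, so that the two blanket statements read $X_k \indep (B \cup D) \given (A \cup C)$ and $X_k \indep (A \cup D) \given (B \cup C)$, while the target is $X_k \indep (A \cup B \cup D) \given C$. Applying weak union to the first statement yields $X_k \indep B \given (A \cup C \cup D)$; combining this with the second statement via the intersection property, taking $C$ as the common conditioning base and assigning the roles $Y = A \cup D$ and $Z = B$, produces exactly $X_k \indep (A \cup B \cup D) \given C$, i.e. $m_1 \cap m_2$ is a Markov blanket. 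Strict positivity of $\truepr$ enters precisely --- and only --- at this invocation of the intersection property, and this is the step I expect to be the crux: the intersection axiom fails for general distributions, which is exactly why uniqueness can break down without positivity. I would either cite this property or prove it directly from positivity of $\truepr(X=x)$.

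Finally, uniqueness follows by minimality. If $m_1$ and $m_2$ are both Markov boundaries, the previous step shows $m_1 \cap m_2$ is a Markov blanket contained in $m_1$; minimality of $m_1$ forces $m_1 \cap m_2 = m_1$, hence $m_1 \subseteq m_2$, and the symmetric argument gives $m_2 \subseteq m_1$, so $m_1 = m_2$. The only genuinely delicate point is establishing the intersection property for strictly positive $\truepr$; the remaining manipulations are routine applications of decomposition and weak union, which hold for every distribution.
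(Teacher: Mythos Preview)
Your argument is correct and follows exactly the route the paper signals: the paper does not supply its own proof but instead cites Proposition~3.1.3 of Drton and notes that uniqueness holds whenever the intersection property is valid, which is precisely the axiom you isolate and use. Your write-up therefore just fills in the details the paper delegates to the literature, and nothing further is needed.
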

For a direct proof, see Proposition~3.1.3 in \citet{drton2009}. This lemma remains true if $\truepr$ is replaced by a (strictly positive) density function. More generally, Markov boundaries are unique as long as the \emph{intersection property} of conditional independence holds in $\truepr$ \citep{dawid1980,peters2015intersection}.

\paragraph{Minimal I-maps and orderings}

A minimal I-map of $\truepr$ is any DAG $\gr=(V,E)$ such that the following conditions hold:
\begin{enumerate}
\item $\truepr$ factorizes over $\gr$, i.e. \eqref{eq:defn:bn} holds, and
\item If any edge is removed from $E$, then \eqref{eq:defn:bn} is violated.
\end{enumerate}
In general, minimal I-maps are not unique. Given an ordering $\ord$ of the variables, a minimal I-map can be constructed as follows \citep[see e.g.,][\sec3.4.1]{koller2009}: For each $k$, define $\pa(k)$ to be $\MB(X_{k};\prec_{k})$, where $\prec_{k}:=\{j:X_{j}\ord X_{k}\}$. This procedure is well-defined as long as $\MB(X_{k};\prec_{k})$ is unique, which is guaranteed by Lemma~\ref{lem:mb:unique}. 
An important consequence of this procedure is that once the layer decomposition of a minimal I-map $\gr$ is known, the full DAG $\gr$ can be recovered by performing local search. To see this, recall that the layers $\layer_{j}$ define canonical ancestral sets $\anc_{j}$ and replace $\MB(X_{k};\prec_{k})$ above with $\MB(X_{k};\anc_{j})$, where $j$ is the largest index such that $X_{k}\notin\anc_{j}$.

\paragraph{Faithfulness and $d$-separation}
Throughout the proofs, we make use of the concept of $d$-separation defined below. Although faithfulness is never assumed, it is useful to recall its definition for completeness.

A \emph{trail} in a directed graph $\gr$ is any sequence of distinct nodes $X_{i_{1}},\ldots,X_{i_{\ell}}$ such that there is an edge between $X_{i_{m}}$ and $X_{i_{m+1}}$. The orientation of the edges does not matter. For example, $X\leftarrow Y\rightarrow Z$ would be a valid trail. 
A trail of the form $X_{i_{m-1}}\rightarrow X_{i_{m}}\leftarrow X_{i_{m+1}}$ is called a \emph{$v$-structure}.
We say a trail $t$ is \emph{active} given another set $C$ if 
(a) $\de(X_{i_{m}})\cap C\ne\emptyset$ for every $v$-structure $X_{i_{m-1}}\rightarrow X_{i_{m}}\leftarrow X_{i_{m+1}}$ in $t$
and (b) no other node in $t$ is in $C$. In other words, $t\cap C$ consists only of central nodes in some $v$-structure contained entirely in $t$.
\begin{definition}
Let $A,B,C$ be three sets of nodes in $\gr$. We say that $A$ and $B$ are $d$-separated by $C$ if there is no active trail between any node $a\in A$ and $b\in B$ given $C$.
\end{definition}
An important consequence of \eqref{eq:defn:bn} is the following: If $\gr$ satisfies \eqref{eq:defn:bn} for some $\truepr$ and $A$ and $B$ are $d$-separated by $C$ in $\gr$, then $A\indep B\given C$ in $\truepr$ (\citealp{koller2009}, Theorems~3.1, 3.2, or \sec3.2.2 in \citealp{lauritzen1996}).

Thus, if $\gr$ is a BN of $\truepr$, then $d$-separation can be used to read off a subset of the conditional independence relations in $\truepr$. Whenever the reverse implication holds---i.e. conditional independence in $\truepr$ implies $d$-separation in $\gr$, we say that $\truepr$ is \emph{faithful} to $\gr$. This condition does not hold in general, not even for minimal I-maps.

\begin{remark}
Faithfulness is a standard assumption in the literature on BNs. Assuming $\gr$ is faithful to $\truepr$ ensures that the \emph{Markov equivalence class} of $\truepr$ is identified, however, this is not the same as identifying $\gr$. More precisely, faithfulness identifies a CPDAG, which is a partially directed graph that encodes the set of conditional independence relationships shared by every DAG in the Markov equivalence class. This can be a strong assumption, especially with finite samples \citep{uhler2013}. By contrast, our approach is to circumvent faithfulness and impose assumptions that identify a bona-fide DAG $\gr$. See Appendix~\ref{app:unfaith} for an explicit example where Condition~\ref{cond:ident:main} identifies an unfaithful DAG.
\end{remark}

\section{Extensions and further examples}
\label{app:ext}

The results in Sections~\ref{sec:ident}-\ref{sec:dag} make several assumptions that are not strictly necessary. In this appendix, we briefly outline how these assumptions can be relaxed. 
In Appendix~\ref{app:mi} we show how the positivity assumption can be replaced by a slightly weaker nondegeneracy condition.
In Appendix~\ref{app:ext:general}-\ref{app:ext:compare} we discuss how Condition~\ref{cond:ident:main} can be relaxed and how it is compared with existing identifiability results. Then we discuss examples and extensions of Condition~\ref{cond:pps} in Appendices~\ref{app:pps:eg}-\ref{app:ext:pps}. Finally, in Appendix~\ref{sec:ext:gen} we discuss extensions to more general distributions.

\subsection{Positivity and a nondegeneracy condition}\label{app:mi}

Throughout the paper, we have assumed that $\truepr$ is strictly positive. In fact, all of the results will go through under the following slightly weaker condition:
\begin{condition}[Nondegeneracy]
\label{cond:mi}
$I(\rv_{k};\pa(k)\given \anc)>0$ for any ancestral set $\anc\subset[d]$ such that $\pa(k)\setminus\anc\ne\emptyset$.
\end{condition}
This condition implies that there is still some information between $X_{k}$ and $\pa(k)$, even after learning everything about $\anc$. This is quite reasonable: If $\pa(k)\setminus\anc\ne\emptyset$ is nonempty, then there is still at least one parent of $X_{k}$ unaccounted for after conditioning on $\anc$. This ``missing parent'' accounts for the ``missing mutual information'' that makes this quantity positive. Seemingly reasonable, there are some degenerate cases where Condition \ref{cond:mi} may not hold. The following lemma makes the characterization of nondegeneracy more precise:
\begin{lemma}
\label{lem:mi}
Suppose that for any two disjoint subsets $A,B\subset X$, $\truepr(A\given B)\notin\{0,1\}$. Then Condition \ref{cond:mi} holds for any minimal I-map of $\truepr$.
\end{lemma}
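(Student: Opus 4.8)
The plan is to argue by contraposition at the level of conditional independence, using the elementary equivalence $I(X_k;\pa(k)\given\anc)>0 \iff X_k\not\indep\pa(k)\given\anc$. So I would fix a minimal I-map $\gr$, a node $X_k$, and an ancestral set $\anc$ with $\pa(k)\setminus\anc\ne\emptyset$, and suppose toward a contradiction that $X_k\indep\pa(k)\given\anc$. Before manipulating independences I would record two structural consequences of $\anc$ being ancestral. First, $X_k\notin\anc$: if $X_k\in\anc$ then $\pa(k)\subset\an(k)\subset\anc$, contradicting $\pa(k)\setminus\anc\ne\emptyset$. Second, the same reasoning applied to any node $X_d\in\anc\cap\de(k)$ forces $X_k\in\an(X_d)\subset\anc$, again a contradiction, so $\anc\subset\nd(k)$.

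Next I would introduce the decomposition $P_1:=\pa(k)\cap\anc$, $P_2:=\pa(k)\setminus\anc\ne\emptyset$, and $R:=\anc\setminus\pa(k)$, so that $\pa(k)=P_1\cup P_2$ and $\anc=P_1\cup R$ with $R$ disjoint from $\pa(k)$. From the contradiction hypothesis $X_k\indep\pa(k)\given\anc$, decomposition gives $X_k\indep P_2\given P_1\cup R$. Separately, the factorization \eqref{eq:defn:bn} implies the local Markov property $X_k\indep\nd(k)\setminus\pa(k)\given\pa(k)$; since $R\subset\nd(k)\setminus\pa(k)$ by the previous paragraph, decomposition gives $X_k\indep R\given P_1\cup P_2$. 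These are the two statements I intend to combine.

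The core of the argument is to apply the intersection property of conditional independence to $X_k\indep R\given P_1\cup P_2$ and $X_k\indep P_2\given P_1\cup R$, yielding $X_k\indep P_2\cup R\given P_1$; decomposition then gives $X_k\indep P_2\given P_1$, and weak union gives $X_k\indep X_p\given\pa(k)\setminus\{X_p\}$ for each $X_p\in P_2$. This contradicts minimality of the I-map, since in a minimal I-map every edge $X_p\to X_k$ is essential, i.e. $X_k\not\indep X_p\given\pa(k)\setminus\{X_p\}$. I would verify this last fact directly from the Markov-boundary characterization $\pa(k)=\MB(X_k;\prec_k)$: because $\pa(k)\setminus\{X_p\}$ is a proper subset of the minimal Markov blanket, $X_k$ fails to be independent of the complementary predecessors given $\pa(k)\setminus\{X_p\}$, and combining this with $X_k\indep\prec_k\setminus\pa(k)\given\pa(k)$ via the contraction axiom isolates $X_p$. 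Notably this direction needs no positivity, so it is pure semigraphoid bookkeeping.

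The main obstacle is justifying the intersection property, which is exactly the step that can fail for general $\truepr$ (for instance when two variables coincide deterministically, so that each is redundant given the other yet they are jointly dependent). This is precisely why the hypothesis $\truepr(A\given B)\notin\{0,1\}$ for disjoint $A,B$ is imposed: it rules out all such deterministic relationships. I would close the argument by invoking \citet{peters2015intersection}, under whose characterization the stated nondegeneracy of every conditional probability guarantees that the intersection property holds in $\truepr$, so the combination step above is valid and the contradiction is complete. Everything outside this one step is a routine application of the decomposition, weak union, and contraction axioms.
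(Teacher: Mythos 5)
Your proposal is correct and follows essentially the same route as the paper's proof: the same decomposition of $\pa(k)$ into the part inside $\anc$ and the part $P_2=\pa(k)\setminus\anc$, the same two independence statements (the assumed one and the local Markov property $X_k\indep \anc\setminus\pa(k)\given\pa(k)$), combined to conclude $X_k\indep P_2\given P_1$ and contradict minimality. The only difference is cosmetic: where you invoke the intersection property abstractly (justified via \citet{peters2015intersection} under the stated nondegeneracy hypothesis, which for binary variables amounts to strict positivity), the paper carries out the same step by an explicit manipulation of conditional probabilities, using the hypothesis to cancel the factor $\truepr(\pa_2(k),\anc'\given\pa_1(k))$.
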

This lemma shows that as long as the dependencies implied by $\gr$ are non-deterministic, Condition \ref{cond:mi} will always be satisfied. In fact, this is guaranteed by the positivity of $\truepr$, which we have assumed already:
\begin{corollary}
\label{cor:mi:pos}
If $\truepr$ is strictly positive, then Condition \ref{cond:mi} holds for any minimal I-map of $\truepr$.
\end{corollary}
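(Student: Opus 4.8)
The plan is to obtain Corollary~\ref{cor:mi:pos} as an immediate consequence of Lemma~\ref{lem:mi}. That lemma already delivers Condition~\ref{cond:mi} for \emph{any} minimal I-map under the single nondegeneracy hypothesis that $\truepr(A\given B)\notin\{0,1\}$ for all disjoint subsets $A,B\subset X$. So the only thing left to do is verify that strict positivity of $\truepr$ implies this hypothesis; the corollary then follows with no additional argument about I-maps or conditional independence.

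First I would fix disjoint (nonempty) subsets $A,B\subset[d]$ and binary assignments $a,b$, and write the conditional probability as a ratio of marginals,
\[
\truepr(A=a\given B=b)=\frac{\truepr(A=a,\,B=b)}{\truepr(B=b)}.
\]
The key observation is that each marginal here is a sum of joint probabilities $\truepr(X=x)$ taken over the configurations consistent with the given assignment, and strict positivity makes every such summand strictly positive. Hence both numerator and denominator are strictly positive, which rules out the value $0$ and already gives $\truepr(A=a\given B=b)\in(0,1]$. To exclude the value $1$, I would use that $A$ is nonempty, so there is a competing assignment $a'\neq a$ on $A$; by the same positivity argument $\truepr(A=a',B=b)>0$. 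Since the conditional probabilities over all assignments to $A$ sum to one and at least two of them are strictly positive, each is strictly below $1$. Combining the two bounds yields $\truepr(A\given B)\notin\{0,1\}$, which is precisely the hypothesis of Lemma~\ref{lem:mi}.

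I do not expect any real obstacle here: the argument is pure bookkeeping on marginals of a strictly positive distribution, and the only point requiring a word of care is the degenerate case where $A$ (or $B$) is empty. I would dispose of this by restricting attention to nonempty sets, which matches the way the hypothesis is actually used in Condition~\ref{cond:mi}, where $A$ plays the role of the nonempty set $\pa(k)\setminus\anc$. With that caveat in place, the corollary is immediate.
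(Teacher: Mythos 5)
Your proposal is correct and takes essentially the same route as the paper: the paper also deduces the corollary immediately from Lemma~\ref{lem:mi} together with an intermediate lemma (Lemma~\ref{lem:ndtm}) asserting that strict positivity forces $\truepr(A\given B)\notin\{0,1\}$, proved by the same argument you give---positivity of the marginals rules out $0$, and the existence of a competing assignment $a'\neq a$ with $\truepr(A=a',B=b)>0$ rules out $1$.
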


Before proving Lemma~\ref{lem:mi}, we first show how Corollary~\ref{cor:mi:pos} follows as a consequence. Indeed, this follows immediately from Lemma~\ref{lem:mi} and the following lemma:
\begin{lemma}\label{lem:ndtm}
If $\truepr(X=x)>0$ for all $x\in\{0,1\}^{d}$, then for any two disjoint subsets $A,B\subset X$, $\truepr(A\given B)\notin\{0,1\}$.
\end{lemma}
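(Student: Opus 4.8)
The plan is to unfold the conditional probability as a ratio of marginal probabilities and argue directly from positivity. Fix disjoint nonempty subsets $A,B\subset X$ together with value assignments $a\in\{0,1\}^{|A|}$ and $b\in\{0,1\}^{|B|}$, and write
\[
\truepr(A=a\given B=b) = \frac{\truepr(A=a,B=b)}{\truepr(B=b)}.
\]
The first step is to observe that both the numerator and the denominator are strictly positive. Indeed, each is obtained by marginalizing the strictly positive joint mass $\truepr(X=x)$ over the set of full configurations extending the respective partial assignment, and this index set is nonempty because $d<\infty$ and the unspecified coordinates may take arbitrary values in $\{0,1\}$. A sum of strictly positive terms over a nonempty index set is strictly positive, so the ratio is well-defined and $\truepr(A=a\given B=b)>0$; in particular it is not $0$.

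The second step is to show the ratio is strictly less than $1$. Decompose the denominator by the law of total probability over the values of $A$,
\[
\truepr(B=b) = \sum_{a'}\truepr(A=a',B=b),
\]
where the sum ranges over all $2^{|A|}\ge 2$ assignments $a'$ to $A$; here I use that $A$ is nonempty. Every summand is strictly positive by the same marginalization argument, so the single term $\truepr(A=a,B=b)$ is strictly smaller than the full sum $\truepr(B=b)$. Dividing through yields $\truepr(A=a\given B=b)<1$, and combining with the previous step gives $\truepr(A=a\given B=b)\in(0,1)$, as claimed.

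There is no serious obstacle here: the result is a direct consequence of positivity together with finiteness of the state space. The only points requiring care are the implicit assumption that $A$ is nonempty (otherwise the conditional is trivially $1$) and the fact that $B=b$ has positive probability so that conditioning is legitimate; both are immediate from positivity. With Lemma~\ref{lem:ndtm} in hand, Corollary~\ref{cor:mi:pos} then follows by substituting its conclusion into the hypothesis of Lemma~\ref{lem:mi}.
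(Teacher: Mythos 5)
Your proof is correct and follows essentially the same route as the paper: positivity of all marginals gives $\truepr(A=a\given B=b)>0$, and the decomposition $\truepr(B=b)=\sum_{a'}\truepr(A=a',B=b)$ with every summand strictly positive rules out the value $1$ (the paper phrases this last step as a proof by contradiction, you argue it directly, but the content is identical). No gaps.
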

\begin{proof}
For any $a,b$, Bayes' rule implies $\truepr(A=a\given B=b)>0$. Now we show $\truepr(A=a\given B=b)\ne1$. Suppose the contrary, then
\[
\truepr(A=a,B=b) = \truepr(B=b) = \sum_{a'}\truepr(A=a',B=b)
\implies 
\sum_{a'\ne a}\truepr(A=a',B=b) = 0,
\]
which is contradictory to $\truepr(A=a, B=b)>0$ and completes the proof.
\end{proof}

\noindent
Now we prove Lemma~\ref{lem:mi}.
\medskip
\begin{proof}(Proof of Lemma~\ref{lem:mi})
We proceed by contradiction. 
Suppose $I(X_{k};\pa(k)\given \anc))=0$, then 
\[
X_k \indep \pa(k)\given \anc
\]
Let $\pa_1(k)=\pa(k)\cap A$, $\pa_2(k)=\pa(k)\setminus A$, and $\anc' = \anc\setminus\pa_1(k)$, so that $\pa(k)=\pa_1(k)\cup \pa_2(k)$, $\anc = \pa_1(k)\cup \anc'$, and $\pa_2(k)\cap \anc=\emptyset$, $\pa_2(k)\ne \emptyset$. Therefore,
\[
\truepr(X_k\given \pa_1(k),\anc')\truepr(\pa_1(k),\pa_2(k)\given \pa_1(k),\anc')=\truepr(X_k, \pa_1(k),\pa_2(k)\given \pa_1(k),\anc').
\]
Since 
\begin{align*}
\truepr(\pa_2(k)=y_2\given \pa_1(k)=y_1',\anc'=a') = &
\sum_{y_1}\truepr(\pa_1(k)=y_1,\pa_2(k)=y_2\given \pa_1(k)=y_1',\anc'=a')\\
=&\sum_{y_1=y_1'}\truepr(\pa_1(k)=y_1,\pa_2(k)=y_2\given \pa_1(k)=y_1',\anc'=a')\\ %
=&\truepr(\pa_1(k)=y_1',\pa_2(k)=y_2\given \pa_1(k)=y_1',\anc'=a').
\end{align*}
Thus we can simplify to
\[
\truepr(X_k\given \pa_1(k),\anc')\truepr(\pa_2(k)\given \pa_1(k),\anc')=\truepr(X_k, \pa_2(k)\given \pa_1(k),\anc')
\]
which amounts to
\[
X_k\indep \pa_2(k) \given (\anc',\pa_1(k)).
\]
Combined with the Markov property $X_k\indep \anc'\given (\pa_1(k),\pa_2(k))$ (i.e. since $A'\subset\nd(k)$), we have
\begin{align*}
    \truepr(X_k,\pa_2(k),\anc'\given \pa_1(k)) & = \truepr(X_k,\pa_2(k)\given \anc',\pa_1(k))\truepr(\anc'\given \pa_1(k))\\
    &= \truepr(X_k\given \anc',\pa_1(k))\truepr(\pa_2(k), \anc'\given \pa_1(k))\\
    & = \truepr(X_k,\anc'\given \pa_1(k),\pa_2(k))\truepr(\pa_2(k)\given \pa_1(k))\\
    &= \truepr(X_k\given \pa_2(k),\pa_1(k))\truepr(\pa_2(k), \anc'\given \pa_1(k)).
\end{align*}
By Condition~\ref{cond:mi}, $\truepr(\pa_2(k), \anc'\given \pa_1(k))\notin\{0,1\}$, so that
\[
\truepr(X_k\given \anc',\pa_1(k)) = \truepr(X_k\given \pa_2(k),\pa_1(k))
\]
holds for different combinations of ($\anc',\pa_2(k)$). These are two functions of $\anc'$ and $\pa_2(k)$ given $\pa_1(k)$, and are equal for all possible combinations of ($\anc',\pa_2(k)$), i.e. it is independent of what values they take on. It follows that 
\[
\truepr(X_k\given \pa_1(k)) = \truepr(X_k\given \pa_2(k),\pa_1(k)).
\]
Since $\gr$ is an I-map, the joint probability factorizes over it
\begin{align*}
\truepr(X_1,\cdots,X_d)&=\prod_{\ell=1}^d \truepr(X_\ell\given \pa(\ell))\\
&= \truepr(X_k\given \pa_1(k),\pa_2(k)) \prod_{\ell\ne k} \truepr(X_\ell\given \pa(\ell))\\ 
&= \truepr(X_k\given \pa_1(k)) \prod_{\ell\ne k} \truepr(X_\ell\given \pa(\ell)).
\end{align*}
Therefore we can remove the edges from $\pa_2(k)$ to $X_k$, which contradicts the minimality of $\gr$. The proof is complete.
\end{proof}

\subsection{More general version of Condition~\ref{cond:ident:main}}\label{app:ext:general}
According to Lemma~\ref{lem:poly}, one sufficient condition for \ref{cond:uneq:mi:short} is to have no path-cancellation between $X_i$ and $X_k$. This can be further relaxed by following Theorem~\ref{thm:ident:unequal2} 
and Algorithm~\ref{alg:uneq}.
For any ancestor $X_i\in \an_j(k)$, denote the subset $\anprime \subseteq \an_j(k)\setminus X_i$ to be the ancestors with smaller conditional entropies than $X_i$, namely,
\begin{align*}
    \anprime
    := \{X_\ell \in \an_j(k)\setminus X_i : \ent(X_\ell\given \anc_{j}) \le \ent(X_i\given \anc_{j}) \}
\end{align*}
\begin{theorem}
\label{thm:ident:unequal2}
For each $X_{k}\in V$ with $\layer(X_k)\ge 2$, and for each $j=0,\cdots,\layer(X_{k})-2$,
there exists $X_i\in \an_j(k)$, which we refer as important ancestors $\imp_j(k)$, such that following two conditions hold:
\begin{enumerate}[label=(U\arabic*)., ref=(U\arabic*), wide = 0pt]
\item $\ent(X_i\given \anc_{j}) < \ent(X_k \given \anc_{j})$\label{cond:uneq}
\item $I(X_k; (\anprime,X_i) \given \anc_{j}) > 0$\label{cond:uneq:mi},
\end{enumerate}
Then $\gr$ is identifiable from $\truepr$.
\end{theorem}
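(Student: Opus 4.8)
The plan is to prove identifiability by showing that the layer decomposition $(\layer_1,\ldots,\layer_r)$ of $\gr$ is a deterministic function of $\truepr$ alone. Once the layers are known, each parental set is recovered as $\pa(k)=\MB(X_k;\anc_{\layer(X_k)-1})$, which is uniquely determined by the positivity of $\truepr$ (Lemma~\ref{lem:mb:unique}). Hence it suffices to exhibit a procedure, depending only on conditional entropies and conditional mutual informations of $\truepr$, that peels off the layers one at a time. I would proceed by induction on $j$, assuming $\anc_j=\cup_{t=0}^{j}\layer_t$ has been correctly identified and showing that $\layer_{j+1}$ can be recovered from $\truepr$ given $\anc_j$.

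The inductive step rests on two structural facts. First, since every node of $\layer_{j+1}$ has all its parents in $\anc_j$ and there are no edges within a layer, $d$-separation gives that the nodes of $\layer_{j+1}$ are mutually conditionally independent given $\anc_j$; consequently $I(X_m;S\given\anc_j)=0$ for any $X_m\in\layer_{j+1}$ and any $S\subseteq\layer_{j+1}\setminus\{X_m\}$. Second, condition \ref{cond:uneq} guarantees that the node of $V\setminus\anc_j$ with smallest conditional entropy $\ent(\cdot\given\anc_j)$ must lie in $\layer_{j+1}$: any $X_k$ in a later layer has an important ancestor $X_i\in\an_j(k)\subseteq\layer_{j+1}$ with strictly smaller conditional entropy, so $X_k$ cannot be the minimizer.

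With these facts I would analyze the testing-and-masking loop of Algorithm~\ref{alg:uneq}: process $V\setminus\anc_j$ in ascending order of $\ent(\cdot\given\anc_j)$, maintaining a set $\wh{\layer}_{j+1}$ of detected layer nodes, and after each addition mask every remaining $X_k$ with $I(X_k;\wh{\layer}_{j+1}\given\anc_j)>0$. The correctness claim is $\wh{\layer}_{j+1}=\layer_{j+1}$ at termination, which I would establish through the invariant $\wh{\layer}_{j+1}\subseteq\layer_{j+1}$: by the first structural fact no genuine layer node is ever masked, and by the minimizer argument every node added is a genuine layer node. The crux of the latter is the claim that, as long as a later-layer node $X_k$ is unmasked, at least one element of $\anprime\cup\{X_i\}$ remains in the pool: all these nodes are layer-$(j+1)$ ancestors of $X_k$ with conditional entropy $\le\ent(X_i\given\anc_j)<\ent(X_k\given\anc_j)$, hence never masked, and if they had all already been added then $X_k$ would already satisfy $I(X_k;\wh{\layer}_{j+1}\given\anc_j)\ge I(X_k;(\anprime,X_i)\given\anc_j)>0$ (by \ref{cond:uneq:mi} and monotonicity of conditional mutual information) and thus be masked. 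So an unmasked, strictly-lower-entropy layer node always shields $X_k$ from being the minimizer, giving soundness, while the same monotonicity inequality forces every later-layer node to be masked once its shielding set enters $\wh{\layer}_{j+1}$, giving completeness.

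The main obstacle is this completeness/soundness bookkeeping for the \emph{joint} test against the accumulated set $\wh{\layer}_{j+1}$, which is the upgrade over the pointwise test $I(X_k;X_i\given\anc_j)>0$ used for Theorem~\ref{thm:ident:unequal2:short}. One must track the order in which layer nodes enter $\wh{\layer}_{j+1}$ relative to the conditional-entropy ordering, carefully handling ties in $\ent(\cdot\given\anc_j)$ (note $\anprime$ is defined with a non-strict inequality, so some shielding ancestors may be entropy-tied with $X_i$), and then invoke monotonicity of conditional mutual information to convert the joint nondegeneracy \ref{cond:uneq:mi} into a positive test value. The remaining ingredients — mutual conditional independence within a layer and uniqueness of Markov boundaries — are standard consequences of $d$-separation and positivity.
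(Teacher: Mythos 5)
Your proposal is correct and follows essentially the same route as the paper's own proof: induction on layers, the strict-entropy-minimizer argument for soundness of each addition, the observation that $(\anprime,X_i)$ are never masked and precede $X_k$ in the entropy ordering, and the monotonicity bound $I(X_k;\wt{L}^{(t)}\given\anc_j)\ge I(X_k;(\anprime,X_i)\given\anc_j)>0$ to force masking of later-layer nodes, with mutual conditional independence within $\layer_{j+1}$ ensuring no genuine layer node is ever masked. The final reduction from the layer decomposition to the full DAG via unique Markov boundaries under positivity is likewise the paper's intended route (it is stated in its graphical-models appendix rather than inside the proof itself).
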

\begin{remark}
\label{rem:gen}
For any node $X_k$ not in current layer $\layer_{j+1}$, \ref{cond:uneq} is exactly the same as \ref{cond:uneq:short}. 
\ref{cond:uneq:mi} requires its important ancestor together with other ancestors with smaller entropy in $\layer_{j+1}$ contribute to positive mutual information with $X_k$. Note that $\anprime$ can be empty, in which case \ref{cond:uneq:mi} reduces to \ref{cond:uneq:mi:short}. If this is applied to the equal entropy case in Condition~\ref{cond:equal}, \ref{cond:uneq:mi} is relaxed to 
\[
I(X_k;\layer_{j+1}\given \anc_{j})>0
\]
which requires the entropy of nodes not in $\layer_{j+1}$ to be conditional dependent with all nodes in $\layer_{j+1}$.
\end{remark}

\begin{algorithm}[t]
\caption{TAM algorithm (general version)}
\label{alg:uneq}
\textbf{Input:} $X=(X_1,\ldots,X_d)$, $\unentthresh$ \\
\textbf{Output:} $\wh{\layer}=(\wh{\layer}_1,\ldots,\wh{\layer}_{\wh{\depth}})$.
\begin{enumerate}
\item Initialize $\wh{\layer}_0=\emptyset$, let $\wh{\anc}_{j}=\cup_{t=0}^j \wh{\layer}_t$
\item For $j\in 0,1,2,\ldots$:
\begin{enumerate}
\item For $k\notin \wh{\anc}_{j}$, estimate conditional entropy $\ent(X_k\given \wh{\anc}_{j})$ by some estimator $\wh{h}_{jk}$. 
\item Initialize $\wh{\layer}_{j+1} = \emptyset$, $\wh{S}_{j+1} = \emptyset$. Sort $\wh{h}_{jk}$ in ascending order and let $\wh{\tau}^{(0)}$ be the corresponding permutation of $V\setminus \anc_j$. 
\item For $\ell\in 0,1,2,\ldots$ until $|\wh{\tau}^{(\ell)}|=0$: \textbf{[TAM step]}
\begin{enumerate}
\item Let $\wh{\layer}_{j+1}=\wh{\layer}_{j+1}\cup \{\wh{\tau}^{(\ell)}_1\}$.
\item For $k\notin \wh{\anc}_{j}\cup \wh{\layer}_{j+1} \cup \wh{S}_{j+1}$, estimate $\mi(X_k;\widehat{\layer}_{j+1}\given \wh{\anc}_{j})$ by some estimator $\wh{\mi}_{jk}^{(\ell)}$
\item Set $\wh{S}_{j+1} = \wh{S}_{j+1}\cup \{k: \wh{\mi}_{jk}^{(\ell)} \ge \unentthresh\}$.
\item $\wh{\tau}^{(\ell+1)}=\wh{\tau}^{(\ell)}\setminus \left(\widehat{\layer}_{j+1}\cup \widehat{S}_{j+1}\right)$
\end{enumerate}
\end{enumerate}
\item Return $\wh{\layer}=(\wh{\layer}_1,\ldots,\wh{\layer}_{\wh{\depth}})$.
\end{enumerate}
\end{algorithm}
Algorithm~\ref{alg:uneq} differs with Algorithm~\ref{alg:uneq:short} in only one step. When testing independence in the TAM step, instead of estimating $\mi(X_k;\widehat{\tau}_1^{(\ell)}\given\widehat{\anc}_j)$, we choose to estimate $\mi(X_k;\widehat{\layer}_{j+1}\given\widehat{\anc}_j)$ to take advantage of $\anprime$ to detect $X_k\notin\layer_{j+1}$.
Since Theorem~\ref{thm:ident:unequal2:short} and Algorithm~\ref{alg:uneq:short} are special cases of Theorem~\ref{thm:ident:unequal2} and Algorithm~\ref{alg:uneq}, we only show the proof of the later (more general) theorem and the correctness of the later algorithm, which are shown in Appendix~\ref{app:uneq2}.

\subsection{Comparison with ``equal entropy'' condition}\label{app:ext:compare}
The ``equal entropy'' Condition~\ref{cond:equal} has a straightforward relaxation which follows directly along similar lines as \citet{ghoshal2017ident}. For completeness, we quote this result below; the proof is identical to this prior work and hence omitted. Denote $h_k=\ent(X_k\given \pa(k))$.
\begin{condition}\label{cond:uneq1}
There exists a topological ordering $\tau$ such that for all $j\in {[d]}$ and $\ell \in \tau_{[j+1:d]}$, the following holds: 
\begin{enumerate}
\item If $k=\tau_j$ and $\ell$ are not in the same layer, then
\begin{align}
\label{eq:thm:ident:unequal1}
    h_k &< h_\ell + I(X_\ell; \pa(\ell)\setminus \tau_{[1:j-1]} \given X_{\tau_{[1:j-1]}}).
\end{align}
\item If $k$ and $\ell$ are in the same layer, then either $h_k=h_\ell$ or \eqref{eq:thm:ident:unequal1} holds.
\end{enumerate}
\end{condition}
\begin{theorem}
\label{thm:ident:unequal1}
If Condition~\ref{cond:uneq1} holds for some ordering $\tau$, then $\tau$ is identifiable from $\truepr$.
\end{theorem}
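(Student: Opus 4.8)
The plan is to recover $\tau$ from $\truepr$ by a recursive, stage-by-stage minimization of conditional entropy, mirroring the variance-based argument of \citet{ghoshal2017ident}. Concretely, I would argue that at stage $j$, having already recovered the prefix $\tau_{[1:j-1]}$, the next element $\tau_j$ is a minimizer of $\ent(X_\ell \given X_{\tau_{[1:j-1]}})$ over all not-yet-selected indices $\ell$. The engine of the argument is the decomposition identity
\[
\ent(X_\ell \given X_{\tau_{[1:j-1]}}) = h_\ell + \mi\big(X_\ell; \pa(\ell)\setminus \tau_{[1:j-1]} \given X_{\tau_{[1:j-1]}}\big),
\]
valid for every $\ell \in \tau_{[j:d]}$. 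This follows from the chain rule for mutual information together with the observation that $\tau_{[1:j-1]} \cup \pa(\ell) \subseteq \nd(\ell)$ (since $\tau$ is topological), so that the Markov property attached to \eqref{eq:defn:bn} gives $\ent(X_\ell \given \pa(\ell), X_{\tau_{[1:j-1]}}) = \ent(X_\ell \given \pa(\ell)) = h_\ell$.

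With this identity in hand, the two regimes of Condition~\ref{cond:uneq1} translate directly into statements about conditional entropies. For the true next node $k = \tau_j$ we have $\pa(k) \subseteq \tau_{[1:j-1]}$, so the mutual information term vanishes and $\ent(X_k \given X_{\tau_{[1:j-1]}}) = h_k$. For any later $\ell$ in a different layer, part (1) of the condition is precisely $h_k < \ent(X_\ell \given X_{\tau_{[1:j-1]}})$, while for a later $\ell$ in the same layer, part (2) gives $\ent(X_\ell \given X_{\tau_{[1:j-1]}}) = h_\ell \ge h_k$ with equality permitted only when $h_\ell = h_k$. Hence $\tau_j$ attains the minimum, strictly beating every candidate outside its own layer, and so the minimizing index is identified from $\truepr$ at each stage.

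The induction would then be carried by verifying that the recovered prefix stays an ancestral set at every stage, which is what licenses the Markov simplification in the displayed identity. I would show that any minimizer selected by the procedure has all of its parents among the previously selected nodes (a source of the residual subgraph), so the selected prefix remains ancestral; positivity of $\truepr$ (via Condition~\ref{cond:mi} and Corollary~\ref{cor:mi:pos}) ensures the mutual information term is strictly positive for any node that still has an unselected parent, ruling out the selection of a non-source. Iterating recovers a topological ordering, and since distinct valid choices differ only by permutations within a layer (the only ties allowed by part (2) of the condition), every run produces the same layer decomposition and hence the same minimal I-map.

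The main obstacle is the bookkeeping around ties and the ancestral-set invariant: one must confirm that whenever two remaining nodes tie for the minimum they necessarily lie in a common layer with equal $h$, so that selecting either keeps the prefix ancestral and leaves the residual problem in a form to which the inductive hypothesis (and Condition~\ref{cond:uneq1}) still applies. Making precise the sense in which ``$\tau$ is identifiable''\,---\,namely up to the within-layer relabelings that leave the minimal I-map unchanged\,---\,is the delicate point; once the ancestral-set invariant and the strict cross-layer separation are established, the recursion closes and identifiability follows.
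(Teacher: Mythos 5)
Your proposal is correct and is essentially the intended argument: the paper omits this proof, stating it is identical to the variance-minimization argument of \citet{ghoshal2017ident}, and your decomposition $\ent(X_\ell\given X_{\tau_{[1:j-1]}}) = h_\ell + I(X_\ell;\pa(\ell)\setminus\tau_{[1:j-1]}\given X_{\tau_{[1:j-1]}})$ is precisely the entropic analogue of the conditional-variance identity used there, with the prefix-of-a-topological-order-is-ancestral observation licensing the Markov simplification. The only slip is cosmetic: for a same-layer $\ell$ you assert $\ent(X_\ell\given X_{\tau_{[1:j-1]}})=h_\ell$, which requires $\pa(\ell)\subseteq\tau_{[1:j-1]}$, but since the mutual-information term is nonnegative the needed inequality $\ent(X_\ell\given X_{\tau_{[1:j-1]}})\ge h_\ell=h_k$ holds regardless, and the tie-handling you flag (ties occur only within a layer and only for nodes whose parents are already selected, so the prefix stays ancestral and the ordering is recovered up to within-layer permutations) resolves exactly as you sketch.
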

We can interpret Condition~\ref{cond:uneq1} in following two ways. 
As long as the conditional mutual information on the right side of \eqref{eq:thm:ident:unequal1} is positive, then the topological sort can be recovered by minimizing conditional entropies, much like the equal variance algorithm.
On the other hand, compared to Condition~\ref{cond:equal} which requires $h_{k}=h_{\ell}$, 
the conditional mutual information between child and parent nodes on the RHS is a bound on the difference $h_{k}-h_{\ell}$. Thus, Condition~\ref{cond:uneq1} can be interpreted as a relaxation of Condition~\ref{cond:equal} in which the ``violations'' of equality are controlled by this conditional mutual information.

\begin{lemma}
\label{lem:uneq2main}
Assuming \ref{cond:uneq:mi:short}, Condition~\ref{cond:uneq1} implies Condition~\ref{cond:ident:main}.
\end{lemma}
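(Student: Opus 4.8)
The plan is to show that Condition~\ref{cond:uneq1}, when combined with the nondegeneracy condition \ref{cond:uneq:mi:short}, forces the two conditions \ref{cond:uneq:short} and \ref{cond:uneq:mi:short} of Condition~\ref{cond:ident:main} to hold for every node $X_k$ with $\layer(X_k)\ge 2$ and every $j=0,\ldots,\layer(X_k)-2$. Since \ref{cond:uneq:mi:short} is assumed outright, the real content is deriving the entropy inequality \ref{cond:uneq:short}, namely that there is an important ancestor $X_i\in\an_j(k)$ with $\ent(X_i\given\anc_j)<\ent(X_k\given\anc_j)$. The natural strategy is to take the ordering $\tau$ guaranteed by Condition~\ref{cond:uneq1} and compare the positions of $X_k$ and a suitably chosen ancestor $X_i\in\an_j(k)$ in $\tau$; the key inequality \eqref{eq:thm:ident:unequal1} will translate into a statement about the conditional entropies $\ent(\cdot\given\anc_j)$ after we account for the difference between conditioning on a topological prefix $X_{\tau_{[1:j-1]}}$ and conditioning on the canonical ancestral set $\anc_j$.

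First I would fix $X_k$ and $j$, and identify which ancestor in $\an_j(k)$ to use as the candidate important ancestor $X_i$; a reasonable choice is the ancestor of $X_k$ in layer $\layer_{j+1}$ that appears \emph{earliest} (or has smallest conditional entropy) so that \eqref{eq:thm:ident:unequal1} applies with $X_i$ playing the role of the earlier-indexed node and $X_k$ the later one. Next I would invoke \eqref{eq:thm:ident:unequal1} in the form $h_i < h_k + I(X_k;\pa(k)\setminus X_{\tau_{[1:\cdot]}}\given X_{\tau_{[1:\cdot]}})$, where $h_i=\ent(X_i\given\pa(i))$ and $h_k=\ent(X_k\given\pa(k))$ are the local conditional entropies. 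The chain-rule identities relating $\ent(X_k\given\anc_j)$ to $\ent(X_k\given\pa(k))$ plus mutual-information terms should let me rewrite the inequality purely in terms of $\ent(X_i\given\anc_j)$ and $\ent(X_k\given\anc_j)$. Because $X_i\in\layer_{j+1}$ means $\pa(i)\subseteq\anc_j$, the term $\ent(X_i\given\anc_j)$ collapses neatly to $h_i$ (or is bounded by it), while for $X_k$ the relationship involves the unexplained parents and hence a positive mutual-information correction, which is exactly the quantity controlling the ``violation'' in Condition~\ref{cond:uneq1}.

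The main obstacle I anticipate is the bookkeeping that reconciles the two different conditioning sets: Condition~\ref{cond:uneq1} is phrased in terms of a \emph{linear} topological prefix $X_{\tau_{[1:j-1]}}$, whereas Condition~\ref{cond:ident:main} is phrased in terms of the \emph{layered} ancestral set $\anc_j$. I expect to need the fact (from the layer decomposition) that nodes in $\layer_{j+1}$ are conditionally mutually independent given $\anc_j$, together with the identity $\ent(X_k\given\anc_j)=\ent(X_k\given\pa(k))+I(X_k;\anc_j\setminus\pa(k)\given\pa(k))$ and its analogue for the prefix, to absorb the discrepancy into the mutual-information slack on the right-hand side of \eqref{eq:thm:ident:unequal1}. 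Since the statement explicitly allows me to assume \ref{cond:uneq:mi:short}, I do not need to re-derive the nondegeneracy; I only need to verify that the chosen $X_i$ has positive conditional mutual information with $X_k$, which is immediate from the hypothesis. Finally, having established \ref{cond:uneq:short} and \ref{cond:uneq:mi:short} for this $X_i$, I conclude $\imp_j(k)\neq\emptyset$ for every relevant $(k,j)$, which is precisely Condition~\ref{cond:ident:main}.
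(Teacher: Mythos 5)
Your overall strategy is the same as the paper's: fix the ordering $\tau$ guaranteed by Condition~\ref{cond:uneq1}, apply \eqref{eq:thm:ident:unequal1} to a chosen $X_i\in\an_j(k)$, observe that $\ent(X_i\given \anc_j)=h_i=\ent(X_i\given X_{\tau_{[1:s-1]}})$ because both conditioning sets are nondescendants of $X_i$ containing $\pa(i)\subseteq\anc_j$, and then transfer the resulting prefix inequality to the layered ancestral set $\anc_j$. The gap is in \emph{which} ancestor you pick. You propose the member of $\an_j(k)$ that appears \emph{earliest} in $\tau$ (or the one of smallest conditional entropy); the paper takes the \emph{latest}, i.e.\ the $X_i=X_{\tau_s}\in\an_j(k)$ closest to $X_k$ in $\tau$. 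This is not cosmetic. What \eqref{eq:thm:ident:unequal1} gives you is $h_i<\ent(X_k\given X_{\tau_{[1:s-1]}})$, and to conclude \ref{cond:uneq:short} you must pass to $h_i<\ent(X_k\given \anc_j)$, which requires $\ent(X_k\given X_{\tau_{[1:s-1]}})\le \ent(X_k\given \anc_j)$: the prefix must be at least as informative about $X_k$ as $\anc_j$ is. Taking $s$ maximal is exactly what guarantees $X_{\tau_{[1:s-1]}}$ contains all of $\an_j(k)\setminus\{X_i\}$ together with the ancestors of $\an_j(k)$, whence $\ent(X_k\given X_{\tau_{[1:s-1]}})=\ent(X_k\given \anc_j,\an_j(k)\setminus X_i)\le \ent(X_k\given \anc_j)$ since enlarging the conditioning set only decreases entropy.

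With the earliest choice the needed inequality can reverse. For instance, let $\anc_1=\layer_1=\{A,B\}$, let $X_i,C\in\layer_2$ with $\pa(i)=\{A\}$, and let $X_k\in\layer_3$ have parents $\{X_i,C,B\}$, with the valid topological order $\tau=(A,X_i,B,C,X_k)$. Then your $X_i$ sits at $s=2$, the prefix is $\{A\}$, and \eqref{eq:thm:ident:unequal1} only yields $h_i<\ent(X_k\given A)$; but $\ent(X_k\given \anc_1)=\ent(X_k\given A,B)\le \ent(X_k\given A)$ (typically strictly, since $B$ is a parent of $X_k$), so $h_i<\ent(X_k\given \anc_1)$ does not follow. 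Your suggestion to ``absorb the discrepancy into the mutual-information slack'' on the right of \eqref{eq:thm:ident:unequal1} does not repair this: that slack is evaluated at the prefix you chose and provides no lower bound on $\ent(X_k\given \anc_j)$. Once ``earliest'' is replaced by ``latest member of $\an_j(k)$ in $\tau$,'' the rest of your outline goes through and coincides with the paper's proof (the verification of \ref{cond:uneq:mi:short} is indeed immediate from the hypothesis, as you note).
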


\begin{proof}
Suppose $X_i=X_{\tau_s}\in \an_j(k)$ has the closest position to $X_k$ in terms of ordering $\tau$, then $X_{\tau_{[1:s-1]}}$ contains all the other ancestors of $X_k$ in $\layer_{j+1}$ except for $X_i$. Furthermore, $X_{\tau_{[1:s-1]}}$ also contains the ancestors of $\an_j(k)$ such that $P(X_k\given \anc_j) = P(X_k\given \text{ancestors of }\an_j(k))$. Therefore, we have \ref{cond:uneq:short}:
\begin{align*}
     \ent(X_i\given \anc_{j}) &= \ent(X_i\given X_{\tau_{[1:s-1]}}) \\
     &< \ent(X_k\given X_{\tau_{[1:s-1]}}) \\
     &= \ent(X_k \given \anc_{j}, \an_j(k)\setminus X_i) \\
     &\le \ent(X_k \given \anc_{j})
\end{align*}
The first inequality is by \eqref{eq:thm:ident:unequal1}, the second one uses the increasing property of conditional entropy when conditioning set is enlarged.
\end{proof}

\begin{remark}
Lemma~\ref{lem:uneq2main} implies Lemma~\ref{lem:equal2main}. In fact,
since Condition~\ref{cond:equal} implies Condition~\ref{cond:uneq1}, we have
\begin{align*}
    \text{Condition~\ref{cond:equal}}
    \implies
    \text{Condition~\ref{cond:uneq1}}
    \implies
    \text{Condition~\ref{cond:ident:main}}.
\end{align*}
\end{remark}

Finally, we conclude with an example to illustrate the difference between Condition~\ref{cond:ident:main} and Condition~\ref{cond:uneq1}.
\begin{example}\label{eg:uneq}
Suppose the graph is 
\begin{align*}
 \ \ \ \ & X_2 \\
 \ \ \nearrow & \ \ \searrow \\
 X_1 \ \ \ \ & \ \ \ \ \ \ \ \ X_4  \\
 \ \ \searrow & \ \ \nearrow \\
 \ \ \ \ & X_3 
\end{align*}
where
\begin{align*}
X_1=Z_1,   & \ \ \ \ Z_1\sim Ber(0.2), \\
X_2=-X_1+Z_2 = -Z_1+Z_2, & \ \ \ \ Z_2\sim Ber(0.1), \\
X_3=X_1+Z_3 = Z_1+Z_3, & \ \ \ \ Z_3\sim Ber(0.2).
\end{align*}
For $X_4$, we consider two models:
\begin{enumerate}[label=(M\arabic*)]
\item\label{model:ident:unequal2} $X_4 = X_2 + Ber(\sigma(\epsilon X_3 + \beta_0))$
\item\label{model:ident:unequal1} $X_4 = X_2+X_3+Z_4 = Z_2+Z_3+Z_4,\ \ \ \ Z_4\sim Ber(0.1)$,
\end{enumerate}
where $\sigma(x)=1/(1+\exp(-x))$. It is straightforward to check that each of these models does not satisfy equalities in Condition~\ref{cond:equal}, but satisfies either Condition~\ref{cond:ident:main} or Condition~\ref{cond:uneq1}. Let's first look at the first three variables:
\begin{align*}
\ent(X_1)=h(0.2)\approx 0.500
\qquad\ent(X_2) &\approx 0.733
\qquad\ent(X_3)\approx 0.778 \\
\ent(X_2\given X_1)=h(0.1)\approx 0.325<\ent(X_2)
\quad&\ent(X_3\given X_1)=h(0.2)\approx 0.500<\ent(X_3)
\end{align*}
So the subgraph on $(X_1,X_2,X_3)$ satisfies both Condition~\ref{cond:ident:main} and Conditions~\ref{cond:uneq1}. Now consider the last node: For the first model \ref{model:ident:unequal2}, $\beta_0=\sigma^{-1}(0.1)=\log\frac{0.1}{1-0.1}\approx -0.219$. 
Choose $\epsilon$ small enough so that $\sigma(\epsilon X_3 + \beta_0)\approx 0.1$, e.g. if $\epsilon=0.01$ then $\sigma(\epsilon X_3 + \beta_0)\in [0.1,0.102]$ when $X_3\in\{0,1,2\}$.
By doing so,
\begin{align*}
&\truepr(X_4=-1)=0.2\times 0.9\times 0.9 && \truepr(X_4=-X_1+0\given X_1)=0.9\times 0.9\\
&\truepr(X_4=0)=0.8\times 0.9\times 0.9+0.2\times 0.9\times 0.1\times 2 && \truepr(X_4=-X_1+1\given X_1)=0.9\times 0.1\times 2\\ 
&\truepr(X_4=1)=0.8\times 0.9\times 0.1\times 2 + 0.2\times 0.1\times 0.1 && \truepr(X_4=-X_1+2\given X_1)=0.1\times 0.1\\
&\truepr(X_4=2)=0.8\times 0.1\times0.1 && \ent(X_4\given X_1)\approx 0.525 < \ent(X_4)\\
&\ent(X_4) \approx 0.87 && \ent(X_4\given X_1,X_2)\approx h(0.1) \approx 0.325 \\
& && < \ent(X_3\given X_1,X_2)<\ent(X_4\given X_1)
\end{align*}
Therefore \ref{model:ident:unequal2} satisfies \ref{cond:uneq:short} and \ref{cond:uneq:mi:short} but not Condition~\ref{cond:uneq1}, since the order between $X_3$ and $X_4$ is flipped. On the other hand, the second model \ref{model:ident:unequal1}, where $X_1\indep X_4$, $I(X_1;X_4)=0$, thus violates \ref{cond:uneq:mi:short} and will incorporate $X_4$ into the first layer. But Condition~\ref{cond:uneq1} still holds. Note that \ref{model:ident:unequal2} is also an example of path-cancellation, which is discussed in Lemma~\ref{lem:poly}.

\end{example}

\subsection{Examples of PPS condition}\label{app:pps:eg}
In this section, we illustrate some examples of DAGs satisfying Condition~\ref{cond:pps} for which there may exist multiple directed paths between two nodes. This shows that the poly-forest assumption in Theorem~\ref{thm:poly} is sufficient but not necessary for Condition~\ref{cond:pps} to hold.

Suppose we have a simple graph on 4 nodes: $V=(Z,X_1,X_2,Y)$. The edges are 
\begin{align*}
    &Z\rightarrow X_1 \rightarrow Y \\
    &Z\rightarrow X_2 \rightarrow Y 
\end{align*}
So there are 2 paths from $Z$ to $Y$. The basic idea is to let $X_1,X_2$ have the same effect on $Y$, while $Z$ has opposite effects on $X_1,X_2$. Then when $Z$ is changed, the distribution of $Y$ is similar. Assuming a logistic model for each conditional probability distribution on this graph:
\begin{align*}
    \truepr(X_i=1\given Z=z) &= \sigma(\beta_i z + \alpha_i) \ \ \ \ i=1,2\\
    \truepr(Y=1\given X_1=x_1,X_2=x_2) &= \sigma(\beta_0(x_1+x_2)+\alpha_0)
\end{align*}
where $\sigma(x) = 1/({1+\exp(-x)})$. It suffices to choose parameters so that $\mi(Z;Y)=0$, i.e. $\truepr(Y=1\given Z=1)= \truepr(Y=1\given Z=0)$. We have
\begin{align*}
    \truepr(Y=1\given Z=1) =& \sigma(\beta_1+\alpha_1)\sigma(\beta_2+\alpha_2)\sigma(2\beta_0+\alpha_0)\\
    +&\sigma(\beta_1+\alpha_1)\sigma(-\beta_2-\alpha_2)\sigma(\beta_0+\alpha_0)\\
    +&\sigma(-\beta_1-\alpha_1)\sigma(\beta_2+\alpha_2)\sigma(\beta_0+\alpha_0)\\
    +&\sigma(-\beta_1-\alpha_1)\sigma(-\beta_2-\alpha_2)\sigma(\alpha_0)
\end{align*}
and 
\begin{align*}
    \truepr(Y=1\given Z=0) =& \sigma(\alpha_1)\sigma(\alpha_2)\sigma(2\beta_0+\alpha_0)\\
    +&\sigma(\alpha_1)\sigma(-\alpha_2)\sigma(\beta_0+\alpha_0)\\
    +&\sigma(-\alpha_1)\sigma(\alpha_2)\sigma(\beta_0+\alpha_0)\\
    +&\sigma(-\alpha_1)\sigma(-\alpha_2)\sigma(\alpha_0),
\end{align*}
so to make them equal it suffices to have
\begin{align*}
    &\beta_1+\alpha_1 = \alpha_2\\
    &\beta_2+\alpha_2 = \alpha_1
\end{align*}
This implies $\beta_1+\beta_2=0$, the effects from $Z$ are neutralized. 

By a similar argument, this example can be generalized to $n$ nodes in the middle layer, i.e. $V=(Z,X_1,\ldots,X_n,Y)$ and $Z\to X_i\to Y$ for each $i$. In this case, it suffices to have $\sum_{i=1}^n\beta_i=0$. This can be further generalized to $n$ paths from $Z$ to $Y$ with $m_1,\cdots,m_n$ nodes on each path, 
\begin{align*}
    & Z\rightarrow X_{11} \rightarrow X_{12} \rightarrow \cdots \rightarrow X_{1m_1} \rightarrow Y \\
    & Z\rightarrow X_{21} \rightarrow X_{22} \rightarrow \cdots \rightarrow X_{2m_2} \rightarrow Y \\
    &\cdots \\
    & Z\rightarrow X_{n1} \rightarrow X_{n2} \rightarrow \cdots \rightarrow X_{nm_n} \rightarrow Y
\end{align*}
The requirement now is that there exists one node in each path
\begin{align*}
    &X_{1i_1} \in \{X_{11},X_{12},\cdots,X_{1n_1}\}\\
    &X_{2i_2} \in \{X_{21},X_{22},\cdots,X_{2n_2}\}\\
    & \cdots \\
    &X_{ni_n} \in \{X_{n1},X_{n2},\cdots,X_{nm_n}\}
\end{align*}
whose coefficients satisfy the equation we derived above. 

\begin{remark}
Although the previous examples assume exact independence between $Y$ and $Z$ (i.e. $\mi(Y;Z)=0$),
this argument can be generalized as long as $\mi(Y;Z)<\inf_k \mi(Y;X_k)$.
\end{remark}

\subsection{Relaxing Condition~\ref{cond:pps}}\label{app:ext:pps}

In the previous section, we constructed examples of DAGs satisfying Condition~\ref{cond:pps} that were not poly-forests. This confirms that our results apply more broadly than Theorem~\ref{thm:poly} would suggest. In this section, we show that this condition can be eliminated altogether by sacrificing the sample complexity.
Here we simply replace the PPS algorithm with direct estimation of $\ent(X_k\given\wh{\anc}_{j})$. Then we can apply the backward phase of the IAMB algorithm \citep{tsamardinos2003algorithms} to infer the parents of each node in $\wh{\layer}_{j+1}$ from $\wh{\anc}_{j}$ in order to learn the whole graph. Algorithm \ref{alg:iamb} describes the backward phase of IAMB algorithm tailored for Algorithm~\ref{alg:uneq:short}, which essentially uses $\wh{\anc}_{j}$ as a candidate set and conditional mutual information (with a parameter $\ppsthresh$) as an independence test.
\begin{algorithm}[t]
\caption{Backward phase of IAMB algorithm}
\label{alg:iamb}
\textbf{Input:} $X=(X_1,\cdots,X_d)$, $k$, $\wh{\anc}_j$, $\ppsthresh$ \\
\textbf{Output:} Markov boundary estimate $\widehat{\mkvbdy}_{jk}$
\begin{enumerate}
\item Initialize $\widehat{\mkvbdy}_{jk} = \wh{\anc}_j$
\item $\widehat{\mkvbdy}_{jk} = \widehat{\mkvbdy}_{jk}\setminus \big\{X_\ell\in \widehat{\mkvbdy}_{jk}: \wh{I}(X_\ell;X_k \given \widehat{\mkvbdy}_{jk}\setminus X_\ell) < \ppsthresh\big\}$.
\item Return $\widehat{\mkvbdy}_{jk}$
\end{enumerate}
\end{algorithm}

Recall that the forward phase of the IAMB algorithm is the same as the PPS procedure given in Algorithm~\ref{alg:pps}. When Condition~\ref{cond:pps} fails, the estimated Markov boundary after the forward phase is no longer guaranteed to be strictly smaller than the input candidate set without further assumptions. Therefore, the forward phase does not provide a benefit in terms of the sample complexity. Denote 
\[
\entgap_I = \min_j\min_{k\in\layer_{j+1}}\min_{\ell\in\pa(k)}I(X_\ell;X_k\given \anc_j\setminus X_\ell).
\]
Condition~\ref{cond:mi} implies that $\entgap_I>0$.
Then we state the following result when Condition~\ref{cond:pps} is relaxed, recall that $\width=\max_j d_j$ is the width of the DAG.
\begin{theorem}
\label{thm:nopps}
Suppose $\gr$ satisfies the identifiability conditions in Theorem \ref{thm:ident:unequal2:short}. Applying Algorithm~\ref{alg:uneq:short}, and estimate the parents of each node with Algorithm~\ref{alg:iamb}.
If $\unentthresh\le\unentgap/2$, $\ppsthresh\le\entgap_I/2$, and
\[
n\gtrsim \left(\frac{2^d \sqrt{\width \depth / d}}{\min(\ppsthresh, \entgap/2, \unentthresh)\sqrt{\epsilon}}\vee \frac{d^3\width \depth}{(\min(\ppsthresh, \entgap/2, \unentthresh))^2\epsilon} \right),
\]
then $\wh{\gr}=\gr$ with probability $1-\epsilon$. 
\end{theorem}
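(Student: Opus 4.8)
The plan is to follow the same skeleton as the proof of Theorem~\ref{thm:main}, proceeding by induction over the layers $\layer_1,\ldots,\layer_\depth$, but with the forward greedy (PPS) analysis replaced by direct conditional-entropy estimation on the full ancestral set and the parental-recovery step replaced by the backward IAMB phase of Algorithm~\ref{alg:iamb}. The reduction is that $\estgr=\gr$ holds whenever (i) the TAM step correctly separates each $\layer_{j+1}$ from its descendants given $\anc_j$, and (ii) Algorithm~\ref{alg:iamb} correctly recovers $\pa(k)=\MB(\rv_k;\anc_j)$ for every $k\in\layer_{j+1}$. I would therefore define a single master good event $\mathcal{G}$ on which every conditional entropy $\ent(\rv_k\given\anc_j)$ and every conditional mutual information used by the algorithm, evaluated at the \emph{true} ancestral sets $\anc_j$ fixed by $\gr$, is estimated to within $t:=\min(\ppsthresh,\entgap/2,\unentthresh)$. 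On $\mathcal{G}$ an induction on $j$ gives $\wh{\anc}_j=\anc_j$ for all $j$ (so the estimates are indeed computed at the true conditioning sets), hence $\estgr=\gr$; the remaining work is to show $\pr(\mathcal{G})\ge 1-\epsilon$ under the stated sample size.

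First I would establish population-level correctness. For layer identification, Condition~\ref{cond:ident:main} supplies, for each descendant $\rv_k$ of $\layer_{j+1}$, an important ancestor $X_i\in\imp_j(k)$ with $\ent(X_i\given\anc_j)<\ent(\rv_k\given\anc_j)$ (gap at least $\entgap$) and $\mi(\rv_k;X_i\given\anc_j)>0$ (gap at least $\unentgap$); exactly as in Theorem~\ref{thm:ident:unequal2:short}, the ascending entropy ordering together with the masking threshold $\unentthresh\le\unentgap/2$ separates $\layer_{j+1}$ from its descendants. For parental recovery I would verify the batch removal in Algorithm~\ref{alg:iamb}: since $\MB(\rv_k;\anc_j)=\pa(k)$, for any non-parent $X_\ell\in\anc_j\setminus\pa(k)$ the relation $\rv_k\indep(\anc_j\setminus\pa(k))\given\pa(k)$, combined with decomposition and weak union (valid under positivity), yields $\rv_k\indep X_\ell\given(\anc_j\setminus X_\ell)$, so $\mi(X_\ell;\rv_k\given\anc_j\setminus X_\ell)=0$; whereas for a parent $X_\ell\in\pa(k)$, Condition~\ref{cond:mi} (implied by positivity) gives $\mi(X_\ell;\rv_k\given\anc_j\setminus X_\ell)\ge\entgap_I>0$. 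Hence with $\ppsthresh\le\entgap_I/2$ the threshold cleanly keeps every parent and discards every non-parent.

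Next comes the finite-sample step, which is where the behavior departs from Theorem~\ref{thm:main}. Writing each conditional mutual information as a difference of conditional entropies, it suffices to control each conditional entropy estimate. The crucial point is that here the conditioning set is the entire ancestral set $\anc_j$, whose size may be as large as $d$, so the relevant alphabet has up to $2^{d+1}$ states (rather than $2^{\mbsize}$ with $\mbsize\lesssim\log d$). Substituting alphabet size $k\asymp 2^{d}$, so $\log k\asymp d$, into the minimax entropy estimator's mean-squared-error bound $\delta^2\lesssim\bigl(k/(n\log k)\bigr)^2+(\log k)^2/n$ (the analogue of the quantity $\delta^2_{\mbsize_{\anc k}}$ used in Proposition~\ref{prop:pps}, now with $\mbsize$ replaced by $|\anc_j|\le d$), applying Chebyshev's inequality to each estimate, and taking a union bound over the $N\asymp d\width\depth$ entropy and mutual-information evaluations performed across all layers, I would require the squared-bias contribution $\tfrac{N}{t^2}\bigl(k/(n\log k)\bigr)^2\lesssim\epsilon$ and the variance contribution $\tfrac{N}{t^2}(\log k)^2/n\lesssim\epsilon$. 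Solving the first gives $n\gtrsim 2^{d}\sqrt{\width\depth/d}/(t\sqrt{\epsilon})$ (the deterministic bias enters with a $\sqrt{\epsilon}$), and solving the second gives $n\gtrsim d^3\width\depth/(t^2\epsilon)$; their maximum is precisely the stated bound.

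I expect the main obstacle to be the careful bookkeeping of this exponential alphabet blowup: arguing that conditioning on all of $\anc_j$ rather than on $\MB(\rv_k;\anc_j)$ inflates the effective support to $2^{d}$, correctly propagating the estimator's bias and variance through the conditional mutual information differences, and fixing the union-bound count $N\asymp d\width\depth$ so that the two regimes of the minimax entropy rate reproduce exactly the two terms inside the maximum. By contrast, the population-level correctness---the graphoid argument for the backward phase and the entropy-ordering argument for the TAM step---is essentially inherited from Theorem~\ref{thm:ident:unequal2:short} and Condition~\ref{cond:mi}, and should require little new effort.
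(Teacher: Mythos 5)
Your proposal is correct and follows essentially the same route as the paper's proof in Appendix~\ref{app:nopps}: population-level correctness of the TAM step is inherited from Theorem~\ref{thm:ident:unequal2:short}, the backward IAMB phase is analyzed exactly as in Lemma~\ref{lem:iamb} (non-parents have zero conditional mutual information given $\anc_j\setminus X_\ell$ while parents contribute at least $\entgap_I$), and the sample complexity follows by plugging the alphabet size $2^{|\anc_j|}\le 2^d$ into the minimax entropy MSE bound, applying Chebyshev, and union-bounding over $O(d\width\depth)$ estimates (the paper counts $d^2$ for the IAMB phase and $d\width\depth$ for the layer phase and absorbs the former using $d\le\width\depth$). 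The two regimes of $\delta_d^2\asymp(2^d/(nd))^2+d^2/n$ yield precisely the two terms in the stated bound, as you derive.
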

\noindent 
The proof is deferred to Appendix~\ref{app:nopps}. Theorem~\ref{thm:nopps} shows for a DAG satisfying the identifiability conditions in Theorem~\ref{thm:ident:unequal2:short} but without assuming Condition~\ref{cond:pps}, the current analysis in Appendix~\ref{app:main} of the algorithm has exponential sample complexity in the worst-case. In fact, the sample complexity for recovering layers and then learning parents from layers using Algorithm~\ref{alg:iamb} are similar. The latter simply inflates the former with $d/\depth$. Again, the exponential dependency comes from estimating the entropy.

\subsection{Extension to general distributions}\label{sec:ext:gen}
The proofs of the theorems in this section are analogous to that of Theorem~\ref{thm:main}, and hence omitted.

\paragraph{General discrete distributions}
The sample complexity result in Theorem~\ref{thm:main} can be easily extended to general discrete distributions with finite support size. The proof of Theorem~\ref{thm:ident:unequal2:short} applies without change. 
\begin{theorem}
\label{thm:discrete}
Suppose $\gr$ is an arbitrary DAG with discrete variables, whose support size is bounded by $N$, satisfying the identifiability conditions in Theorem \ref{thm:main}. Algorithm~\ref{alg:uneq:withpps} is applied with minimax entropy estimator and thresholds in Theorem \ref{thm:main}. If $\mbsize\lesssim \log d$ and sample size satisfies
\[
 n \gtrsim \left(\frac{d^2\depth\log^{3} d\log^{2} N}{(\entgap^*_{\unentthresh,\ppsthresh})^2\epsilon} \vee \frac{d^{1+\log N }}{\entgap^*_{\unentthresh,\ppsthresh}\log N}\sqrt{\frac{\depth}{\epsilon\log d}}\right),
\]
then $\wh{\gr}=\gr$ with probability $1-\epsilon$. 
\end{theorem}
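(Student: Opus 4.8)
The plan is to reuse the entire argument of Theorem~\ref{thm:main} essentially verbatim, changing only the quantitative input that measures how accurately entropies and conditional mutual informations can be estimated from $n$ samples. The crucial observation is that nothing in the population-level identifiability statement (Theorem~\ref{thm:ident:unequal2:short}) or in the PPS guarantee (Proposition~\ref{prop:pps}) depends on the alphabet being binary: Condition~\ref{cond:ident:main} and Condition~\ref{cond:pps} are phrased purely in terms of conditional entropies $\ent(X_k\given\anc_j)$ and conditional mutual informations $\mi(X_k;X_c\given m)$, and the gaps $\entgap,\unentgap,\widetilde{\entgap}_{jk},\ppsgap_{jk}$ remain well-defined positive quantities for any finite-support discrete distribution. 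Hence the only place the support bound $N$ enters is through the estimation error $\delta^2_\mbsize$ appearing in Proposition~\ref{prop:pps} and in the union bound over all estimation events that drives the sample complexity in Theorem~\ref{thm:main}.

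First I would record how the minimax entropy estimator of \citet{wu2016minimax} behaves on a larger alphabet. Every quantity the algorithm estimates---a conditional entropy $\ent(X_k\given\wh{\anc}_j)$ or a conditional mutual information, both reduced to differences of joint entropies---is computed on a joint distribution over $X_k$ together with a conditioning set of size at most $\mbsize$, each coordinate ranging over at most $N$ states. The effective alphabet size is therefore $S\le N^{\mbsize+1}$, and plugging this into the minimax rate gives an estimation error of the form
\[
\delta^2_\mbsize \;\lesssim\; \Big(\frac{N^{\mbsize+1}}{n\,\mbsize\log N}\Big)^{2} \;+\; \frac{\mbsize^{2}\log^{2}N}{n},
\]
where the first (bias-type) term requires $n\gtrsim S/\log S$ to sit in the consistent regime and the second is the variance-type term; this is exactly \eqref{eq:def:delta} specialized to $S=N^{\mbsize+1}$, and it reduces to the binary bound when $N=2$.

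Next I would propagate this through the same union bound used for Theorem~\ref{thm:main}. Exactly as before, the total failure probability is a sum of $O(d^2\depth\log d)$ terms, each controlled by $\delta^2_\mbsize/(\entgap^*_{\unentthresh,\ppsthresh})^2$ via Proposition~\ref{prop:pps} (for Markov-boundary recovery) together with the analogous conditional-entropy and conditional-MI concentration bounds (for layer detection and for masking). Setting this sum to be at most $\epsilon$ and solving for $n$ separates into the two branches of the stated bound: the variance term $\mbsize^2\log^2N/n$ yields, after substituting $\mbsize\lesssim\log d$ and absorbing the extra $\log d$ from the event count, the polynomial requirement $n\gtrsim d^2\depth\log^3 d\,\log^2 N/((\entgap^*_{\unentthresh,\ppsthresh})^2\epsilon)$; while the bias term, using $N^{\mbsize+1}\lesssim N\cdot N^{\log d}=N\cdot d^{\log N}$, yields the second requirement $n\gtrsim \frac{d^{1+\log N}}{\entgap^*_{\unentthresh,\ppsthresh}\log N}\sqrt{\depth/(\epsilon\log d)}$.

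The main obstacle will be bookkeeping rather than any new idea: one must verify that the identity $N^{\log d}=d^{\log N}$ correctly converts the exponential-in-$\mbsize$ bias term into the $d^{1+\log N}$ factor under the sparsity assumption $\mbsize\lesssim\log d$, and that the two terms of $\delta^2_\mbsize$ cleanly separate into the two branches of the maximum without cross terms dominating. Because every other step---identifiability, the correctness of the TAM masking, and the PPS recovery guarantee---is unchanged from the binary analysis, no part of the proof of Theorem~\ref{thm:ident:unequal2:short} needs revisiting, which is why it may be quoted without modification.
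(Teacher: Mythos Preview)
Your proposal is correct and matches the paper's approach exactly: the paper explicitly states that the proof is ``analogous to that of Theorem~\ref{thm:main}, and hence omitted,'' so replacing the binary alphabet size $2^{\mbsize}$ in \eqref{eq:def:delta} by $N^{\mbsize}$ and tracking the two terms of $\delta^2_{\mbsize}$ through the same union bound is precisely what is intended. Your bookkeeping (using $N^{\log d}=d^{\log N}$ under $\mbsize\lesssim\log d$) is the right way to arrive at the two branches of the sample-complexity bound, up to constant factors absorbed in the $\gtrsim$ notation.
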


\paragraph{Continuous distributions}
Theorem~\ref{thm:ident:unequal2:short} also applies to continuous variables if we replace Shannon entropy with differential entropy. Differential entropy does not preserve all the properties of entropy, e.g. it is not always non-negative and it is not invariant to invertible transformations. Fortunately, differential entropy preserves the essential properties for Theorem~\ref{thm:ident:unequal2:short} to hold. In particular,
since continuous mutual information is still non-negative, the positiveness in Condition~\ref{cond:mi} is still reasonable to assume.

For differential entropy estimation in Algorithm~\ref{alg:uneq:withpps}, we can adopt the minimax estimator from \citet{han2020optimal}, which has the optimal rate over Lipschitz balls. Thus we have sample complexity result in Theorem~\ref{thm:cts}, whose proof is similar with Theorem~\ref{thm:main} in Appendix~\ref{app:main}, simply replacing the estimator for entropy with the one for differential entropy and then applying the result in \citet{han2020optimal}.
\begin{theorem}
\label{thm:cts}
Suppose $\gr$ is an arbitrary DAG with continuous variables, whose densities are over Lipschitz balls with smoothness parameter $0<s\le2$, satisfying the conditions in Theorem~\ref{thm:main}. Algorithm~\ref{alg:uneq:withpps} is applied with differential entropy estimation, and thresholds in Theorem~\ref{thm:main}. If $\mbsize\lesssim \log d$ and sample size satisfies
\[
\left(n\log n \right)^{\frac{2s}{s+\log d}} \vee n \gtrsim \frac{d^2\depth\log d}{(\entgap^*_{\unentthresh,\ppsthresh})^2\epsilon},
\]
then $\wh{\gr}=\gr$ with probability $1-\epsilon$. 
\end{theorem}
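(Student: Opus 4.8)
The plan is to follow the proof of Theorem~\ref{thm:main} line by line, changing only the entropy estimator and the rate it satisfies; two things need checking. First, that identifiability and algorithmic correctness survive the passage from Shannon to differential entropy. As the surrounding text notes, the proofs of Theorem~\ref{thm:ident:unequal2:short} and the correctness of Algorithm~\ref{alg:uneq:withpps} use only three facts about $\ent$: that conditioning cannot increase entropy, i.e. $\ent(X_k\given\anc_j,\an_j(k)\setminus X_i)\le\ent(X_k\given\anc_j)$, which holds because $\ent(X\given Y)-\ent(X\given Y,Z)=I(X;Z\given Y)\ge0$ and continuous mutual information is non-negative; the nondegeneracy of Condition~\ref{cond:mi}, which again only needs $I\ge0$; and the chain rule. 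None of these invokes non-negativity of $\ent$ or invariance under reparametrization, the two properties differential entropy lacks. Hence the layer-separation argument, the TAM masking, and the PPS reduction to Markov boundaries all carry over verbatim, and the population gaps $\entgap,\unentgap,\ppsgap_{jk},\widetilde{\entgap}_{jk}$, hence $\entgap^*_{\unentthresh,\ppsthresh}$, remain strictly positive.

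Second, I would replace the Wu--Yang discrete estimator with the minimax differential-entropy estimator of \citet{han2020optimal}. Every quantity the algorithm estimates is a fixed linear combination of joint differential entropies of densities in dimension at most $\mbsize_{jk}+2\le\mbsize+2\lesssim\log d$ (a conditional entropy $\ent(X_k\given\mkvbdy_{jk})$ uses dimension $\mbsize_{jk}+1$, a conditional mutual information $I(X_k;X_\ell\given m)$ at most $\mbsize+2$). Over an $s$-Lipschitz ball in dimension $p$, \citet{han2020optimal} attains mean-squared error $\delta_p^2\asymp(n\log n)^{-2s/(s+p)}+n^{-1}$; with $p\lesssim\log d$ this gives the continuous analogue $\delta^2\lesssim(n\log n)^{-2s/(s+\log d)}+n^{-1}$ of the error term $\delta^2_{\mbsize_{\anc k}}$ appearing in Proposition~\ref{prop:pps}.

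Third, I would feed this $\delta^2$ into the identical second-moment (Chebyshev) union bound used for Proposition~\ref{prop:pps} and Theorem~\ref{thm:main}: the probability that any of the estimated conditional entropies or conditional mutual informations deviates from its population value by more than $\entgap^*_{\unentthresh,\ppsthresh}$ is at most (number of estimates)$\times\delta^2/(\entgap^*_{\unentthresh,\ppsthresh})^2$. The number of estimates over all $\depth$ layers is $O(d^2\depth)$ up to one $\log d$ factor coming from the $\mbsize+2\lesssim\log d$ terms in PPS and TAM; crucially, the continuous variance term $n^{-1}$ carries no $\log d$, unlike the discrete $\log^2 k=\mbsize^2$ term, which is why the final bound has $\log d$ rather than the $\log^3 d$ of Theorem~\ref{thm:main}. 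Requiring this failure probability to be at most $\epsilon$ and splitting $\delta^2$ into its nonparametric piece $(n\log n)^{-2s/(s+\log d)}$ and parametric piece $n^{-1}$ produces the two-term requirement $(n\log n)^{2s/(s+\log d)}\vee n\gtrsim d^2\depth\log d/\big((\entgap^*_{\unentthresh,\ppsthresh})^2\epsilon\big)$, which is exactly the claim; $\wh{\gr}=\gr$ then follows as in Theorem~\ref{thm:main}.

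The main obstacle, and the only genuinely new ingredient, is controlling the estimator's risk uniformly at the effective dimension $p\lesssim\log d$. One must verify that the joint and marginal densities actually estimated (not merely the full $d$-dimensional density) lie in Lipschitz balls, and track how the exponent $s/(s+p)$ degrades once $p$ grows like $\log d$ --- this degradation is precisely what forces both the $(n\log n)^{2s/(s+\log d)}$ term and the $\mbsize\lesssim\log d$ sparsity assumption. Everything else is a transcription of the discrete argument.
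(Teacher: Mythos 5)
Your proposal is correct and takes essentially the same route as the paper: the paper omits the proof of Theorem~\ref{thm:cts}, stating only that it is the proof of Theorem~\ref{thm:main} with the discrete entropy estimator replaced by the minimax differential-entropy estimator of \citet{han2020optimal} and its rate $(n\log n)^{-2s/(s+p)}+n^{-1}$ at effective dimension $p\lesssim\mbsize\lesssim\log d$, after observing (as you do) that the identifiability and TAM/PPS arguments rely only on non-negativity of mutual information and the chain rule rather than on non-negativity or reparametrization-invariance of entropy. Your accounting of why the $\log^3 d$ in Theorem~\ref{thm:main} collapses to $\log d$ here (the parametric variance term $n^{-1}$ carries no $\mbsize^2$ factor) is consistent with the stated bound.
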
 

\subsection{Unfaithful example}\label{app:unfaith}
To provide a comparison with the commonly assumed faithfulness assumption, here we construct a simple unfaithful example to illustrate how our approach does not rely on this assumption.

Consider the three-node DAG $Z\to Y$, $Z\to X\to Y$, where $Z$ is a common cause of $X$ and $Y$, and the effect $Z\to Y$ is cancelled by the path $Z\to X\to Y$. Then independence between $Z$ and $Y$ does not imply the $d$-separation between them. Now add one more node $W$ with an edge $W\to X$; See below:
\begin{align*}
    Z\ \ \ \ \ \   & \rightarrow \ \ \ \ Y \\
      \searrow &  \ \ \ \ \nearrow \\
    & X  \\
      \nearrow &  \\
    W \ \ \ \ &
\end{align*}
This BN is still unfaithful due to the independence between $Z$ and $Y$, but can easily be made to satisfy Condition~\ref{cond:ident:main}. To see this, in the first layer, $W$ is used to mask $(X,Y)$ (note that $Z$ is unable to, by independence). Then in the second layer, $X$ will mask $Y$. A concrete example is following:
\begin{align*}
    W & \sim \mathcal{N}(0,\tfrac12) \\
    Z & \sim \mathcal{N}(0,1) \\
    X & = W + \tfrac12 Z + \mathcal{N}(0,\tfrac12) \\
    Y & = X - \tfrac12 Z + \mathcal{N}(0,1)
\end{align*}
We have run experiments to show PC/GES will have SHD around 3 (i.e. very bad for this small model) but the proposed TAM algorithm perfectly recovers the DAG, as expected.

Furthermore, this simple example can be embedded into any DAG satisfying our identifiability condition by treating some sink node as $W$. And we may conjecture that, when unfaithfulness happens in a form of path-cancellation, if we have some other nodes in the ``ancestral'' layer, it is able to help identify the DAG as long as the  ``uncertainty'' relation in \ref{cond:uneq:short} is satisfied.

\section{Proof of Theorem \ref{thm:ident:unequal2:short}}
\label{app:uneq2}

Since Theorem \ref{thm:ident:unequal2:short} is a special case of Theorem~\ref{thm:ident:unequal2} (i.e. $\anprime=\emptyset$), it suffices to prove the latter result.

\begin{proof}[Proof of Theorem \ref{thm:ident:unequal2}]
The proof illustrates the mechanism of Algorithm \ref{alg:uneq} to identify DAGs satisfying conditions in Theorem \ref{thm:ident:unequal2}. If we have identified $\anc_{j}=\cup_{t=0}^{j}\layer_t$, our goal is to distinguish $\layer_{j+1}$ from $V\setminus \anc_{j+1}$. If it is possible, then by induction on $j$ we can complete the proof.

Denote $\ent(X_k \given \anc_{j})=h_{jk}$ for all $k \in V\setminus\anc_{j}$. Sort $h_{jk}$ in ascending order, denote this order by $\tau^{(0)}$, where $\tau^{(0)}$ is a permutation of size $|V\setminus\anc_{j}|$. Any node $X_k \in V\setminus \anc_{j+1}$ must have at least one ancestor in $\layer_{j+1}$. Meanwhile, there exists one important ancestor $X_i\in \an_j(k)$ such that $h_{ji} < h_{jk}$ by \ref{cond:uneq}. 
This implies $\tau^{(0)}_i < \tau^{(0)}_k$ and therefore, $\tau^{(0)}_1$ must be from $\layer_{j+1}$. 

We proceed by considering two operations on the nodes in $V\setminus\anc_{j}$: Testing and Masking. Specifically, we maintain two sequences of set of nodes $\widetilde{L}^{(t)}$ and $\widetilde{S}^{(t)}$, which is indexed by number of testing operations we have conducted (starts with $t=0$ and the same with the superscript of $\tau^{(t)}$). Initialize $\widetilde{L}^{(0)} = \widetilde{S}^{(0)}=\emptyset$, and put nodes being tested or masked into $\widetilde{L}^{(t-1)}$ or $\widetilde{S}^{(t-1)}$ to get $\widetilde{L}^{(t)}$ or $\widetilde{S}^{(t)}$ respectively. 

Then we for each $t=1,2,\cdots$, update 
\begin{align*}
\widetilde{L}^{(t)}=\widetilde{L}^{(t-1)} \cup \{\tau^{(t-1)}_1\}    
\end{align*}
If there are ties, include all the tied nodes into $\widetilde{L}^{(t)}$ and follow the same argument. For $k \in \tau^{(t-1)}_{[2:]}$ compute the mutual information (a.k.a. entropy reduction)
\begin{align*}
    I(X_k; \widetilde{L}^{(t)} \given \anc_{j}) = h_{jk} - \ent(X_k\given \anc_{j}, \widetilde{L}^{(t)} )
\end{align*}
and mask the nodes with positive mutual information, namely update the masked set
\begin{align*}
    \widetilde{S}^{(t)} = \widetilde{S}^{(t-1)}\cup \{k: \mi(X_k;\tilde{L}^{(t)}\given \anc_j) > 0\}.
\end{align*}
Then remove the nodes that have been conditioned or masked from $\tau^{(t-1)}$ to get
\begin{align*}
    \tau^{(t)} = \tau^{(t-1)}\setminus (\widetilde{S}^{(t)}\cup \widetilde{L}^{(t)})
\end{align*}
The following crucial properties of an important ancestor $X_i$ with $\anprime$ of $X_k$ as easy to check: 
\begin{enumerate}
    \item $X_i\indep (\layer_{j+1}\setminus X_i)\given \anc_j$, and thus $\mi(X_i;\layer_{j+1}\setminus X_i\given \anc_j)=0$, so is $\anprime$;
    \item For $X_\ell\in \anprime$, $\tau^{(t)}_\ell < \tau^{(t)}_i$ if $X_i$ and $X_\ell$ are in $\tau^{(t)}$;
    \item $(X_i,\anprime)$ will not be masked and put into $\widetilde{S}^{(t)}$ for all $t$;
    \item If $X_k\in \tau^{(t)}$, then $X_i\in \tau^{(t)}$ and $\tau^{(t)}_i<\tau^{(t)}_k$.
\end{enumerate}
1) is due to $(X_i, \anprime)\subseteq \layer_{j+1}$ and the definition of layer decomposition. 2) is by the definition of $\anprime$, thus $\tau^{(0)}_\ell < \tau^{(0)}_i$. And $\tau^{(t)}$ is a subset of $\tau^{(0)}$, hence the order is preserved. 3) is by 1) otherwise the mutual information is positive. For 4), if $X_i\notin \tau^{(t)}$, then $X_i$ has been included into $\widehat{\layer}^{(t)}$, so is $\anprime$ by 2). Then $X_k$ should have been masked before $t$ since
\begin{align*}
    I(X_k; \widetilde{L}^{(t)} \given \anc_{j}) \ge I(X_k; (X_i,\anprime) \given \anc_{j}) > 0.
\end{align*}
$\tau^{(t)}_i<\tau^{(t)}_k$ is due to the same reason as 2).

Then we conclude that $\tau^{(t)}_1$ must be from $\layer_{j+1}$ by 4).
By continuing this procedure, all the nodes are eventually tested or masked after say $t^*$ steps, i.e., $V\setminus \anc_j = \widetilde{L}^{(t^*)}\cup \widetilde{S}^{(t^*)}$.
Since nodes in $\widetilde{L}^{(t^*)}$ are composed of $\tau_1^{(t)}$ for $t=1,2,\cdots$, thus $\widetilde{L}^{(t^*)}\subseteq \layer_{j+1}$. Then we further claim for any $\ell\in \widetilde{S}^{(t^*)}$, $X_\ell\notin \layer_{j+1}$. Suppose $X_\ell$ is included at step $t$, otherwise we will have
\begin{align*}
    0 < \mi(X_\ell; \widetilde{L}^{(t)}\given \anc_j) \le \mi(X_\ell; \layer_{j+1}\given \anc_j) = 0
\end{align*}
The last equality is due to the mutual independence of nodes in $\layer_{j+1}$ given $\anc_j$. Therefore, the final $\widetilde{L}^{(t^*)}$ is exactly $\layer_{j+1}$.
\end{proof}

\section{Proof of Theorem \ref{thm:main}}\label{app:main}
We prove the theorem in two steps. After establishing some preliminary bounds, we prove Proposition~\ref{prop:pps}, which gives the sample complexity for using PPS to recover Markov boundaries and estimate the  conditional entropy under Condition~\ref{cond:pps}. Then we establish the sample complexity of the layer-wise learning framework for DAGs satisfying identifiability conditions in Theorem \ref{thm:ident:unequal2:short}.

\subsection{Preliminary bounds}\label{app:main:prebound}

To derive the sample complexity of Algorithm~\ref{alg:uneq:withpps}, we borrow the minimax estimator of entropy from \citet{wu2016minimax}. Denote this estimator by $\wh{\ent}(X)$.
\begin{lemma}
\label{lem:entest}
For discrete random variables $X_1,\cdots,X_d$ with distribution $P_1,\cdots,P_d$ and support size $K_{1},\cdots,K_{d}$, let $K=\max_k K_{k}$. If $n\gtrsim \frac{K}{\log K}$, then 
\[
\sup_{k}\truepr\left[\left|\ent(X_k)-\wh{\ent}(X_k)\right|\ge t \right]\lesssim \left[  \left(\frac{K}{n\log K}\right)^2 + \frac{\log^2 K}{n} \right]/t^2
\]
\end{lemma}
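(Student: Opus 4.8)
The plan is to deduce this tail bound directly from the minimax mean-squared-error guarantee of \citet{wu2016minimax} together with Chebyshev's inequality; no new probabilistic argument is needed beyond bookkeeping over the support sizes. Recall that $\wh{\ent}$ is precisely the polynomial-approximation-based minimax estimator analyzed in that work, whose central result states that, for a discrete distribution on an alphabet of size $K_k$ and in the regime $n\gtrsim K_k/\log K_k$, one has the uniform second-moment bound
\[
\sup_{P_k}\E\big[(\wh{\ent}(X_k)-\ent(X_k))^2\big]\lesssim \Big(\tfrac{K_k}{n\log K_k}\Big)^2+\tfrac{\log^2 K_k}{n}.
\]
First I would invoke this bound coordinatewise for each $k\in[d]$.

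Next I would pass from the coordinatewise rates to a single bound in terms of $K=\max_k K_k$. The rate function $f(m)=(m/(n\log m))^2+(\log m)^2/n$ is increasing in $m$ (both summands are), so $f(K_k)\le f(K)$ for every $k$; likewise $m\mapsto m/\log m$ is increasing, so the hypothesis $n\gtrsim K/\log K$ guarantees the regime condition $n\gtrsim K_k/\log K_k$ needed for each coordinatewise bound to be valid. Consequently $\sup_k \E[(\wh{\ent}(X_k)-\ent(X_k))^2]\lesssim f(K)$, which is exactly the bracketed quantity in the statement.

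Finally I would apply Markov's inequality to the nonnegative random variable $(\wh{\ent}(X_k)-\ent(X_k))^2$: for any $t>0$,
\[
\truepr\big[\,|\ent(X_k)-\wh{\ent}(X_k)|\ge t\,\big]=\truepr\big[(\ent(X_k)-\wh{\ent}(X_k))^2\ge t^2\big]\le \frac{\E[(\ent(X_k)-\wh{\ent}(X_k))^2]}{t^2}.
\]
Taking the supremum over $k$ and substituting the bound from the previous step yields the claim. The only point requiring genuine care is the monotonicity check that lets the maximal support size $K$ absorb all the coordinatewise rates (both in the rate function and in the regime condition); the substantive content, namely the matching upper bound on the MSE, is supplied entirely by \citet{wu2016minimax}, so I do not expect any real obstacle here beyond correctly importing their guarantee and noting that the $1/t^2$ scaling is exactly what the second-moment bound delivers through Chebyshev.
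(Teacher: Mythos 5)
Your argument is correct and is essentially the paper's own proof: both reduce the tail bound to the minimax mean-squared-error guarantee of \citet{wu2016minimax} via Chebyshev's inequality applied to the squared error. The only (cosmetic) difference is in how the individual support sizes are absorbed into $K=\max_k K_k$ --- the paper pads every variable to a common alphabet of size $K$ so that the MSE bound applies directly, whereas you keep each $K_k$ and invoke monotonicity of the rate function; both are fine up to the implicit constants (note only that $m\mapsto m/\log m$ dips slightly between $m=2$ and $m=e$, which the $\lesssim$ harmlessly absorbs).
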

\begin{proof}
First we pad all the random variables such that they have the same support size $K$ by assigning the extra support with zero probability. Then for all $X_k$
\[
\truepr\left[\left|\ent(X_k)-\wh{\ent}(X_k)\right| \ge t\right] \le \frac{\E(\ent(X_k)-\wh{\ent}(X_k))^2}{t^2}
\]
Thus, by Proposition 4 in \citet{wu2016minimax},
\begin{align*}
\sup_{k}\truepr\left[\left|\ent(X_k)-\wh{\ent}(X_k)\right| \ge t\right] 
&\le \frac{\sup_{k}\E(\ent(X_k)-\wh{\ent}(X_k))^2}{t^2} \\
&\lesssim \frac1{t^{2}}\left[\left(\frac{K}{n\log K}\right)^2 + \frac{\log^2 K}{n} \right].
\end{align*}
\end{proof}

We next establish some preliminary uniform bounds on the estimation error of conditional entropy and mutual information using Lemma~\ref{lem:entest}.
Suppose we estimate the conditional entropy $\ent(X_k\given \anc)$ for any $k\in V$ and its some ancestral set $\anc$ by
\begin{align}\label{eq:entest}
\wh{\ent}(X_k\given \anc) = \wh{\ent}(X_k,\anc) - \wh{\ent}(\anc).
\end{align}
Then because we are dealing with binary variables, by Lemma~\ref{lem:entest}
\begin{align}
\nonumber
  \sup_{k, \anc}&\truepr\left[\left|\widehat{\ent}(X_k\given \anc)-\ent(X_k\given \anc)\right|\ge t\right] \\
\nonumber
   &\le \sup_{k,\anc} \frac1{t^{2}}\left(\widehat{\ent}(X_k\given \anc)-\ent(X_k\given \anc)\right)^2 \\
\nonumber
   &\le \sup_{k,\anc} \frac1{t^{2}}\left\{ 2\left(\widehat{\ent}(X_k,\anc)-\ent(X_k,\anc)\right)^2+2\left(\widehat{\ent}(\anc)-\ent(\anc)\right)^2  \right\}\\
\label{eq:condent:bound}
   &\les \frac{\delta_{|\anc|+ 1}^2}{t^2}
\end{align}
where 
\begin{align}
\label{eq:def:delta}
\delta_{p}^2 \asymp \left[\left(\frac{2^{p}}{n p}\right)^2 + \frac{p^2}{n}\right].
\end{align}
Note that $\delta^2_{p}$ is an increasing function of $p$. When $\anc=\emptyset$, and the $|\anc|=0$. Thus $\delta^2_{|\anc|+1}$ reduces to estimating entropy of a binary random variable, which is of parametric rate $1/n$.
Similarly, if we try to estimate the conditional mutual information by using sample version of the identity
\begin{align*}
    \mi(X_k;X_\ell\given \anc) &= \mi(X_k;(X_\ell,\anc)) - \mi(X_k;\anc) \\
    &= \ent(X_k) + \ent(X_\ell,\anc) - \ent(X_k,X_\ell,\anc) \\
    &- \ent(X_k) - \ent(\anc) + \ent(X_k,\anc) \\
    &= \ent(X_\ell,\anc) - \ent(X_k,X_\ell,\anc)- \ent(\anc) + \ent(X_k,\anc)
\end{align*}
The estimation error is again dominated by the second term, which has the largest support size:
\begin{align}\label{eq:mi:bound}
    \sup_{k,\ell,\anc}\truepr \left[\left|\widehat{\mi}(X_k;X_\ell\given \anc)-\mi(X_k;X_\ell\given \anc)\right|\ge t\right] \les \frac{\delta_{|\anc|+ 2}^2}{t^2}
\end{align}
The factor of $+2$ is not important when size $|\anc|$ is large, thus for simplicity we absorb it into the constant before them. We will present the estimation error bound for $\ent(X_\ell\given \anc)$ and $\mi(X_k;X_\ell\given \anc)$ by $C\delta^2_{|\anc|}/t^2$ for some constant $C$.

\subsection{Proof of Proposition~\ref{prop:pps}}\label{app:samcom:pps}
In this section, we prove the Proposition~\ref{prop:pps}. Recall that $\mbsize_{\anc k} := |\MB(k;\anc)|$.
\begin{proof}[Proof of Proposition~\ref{prop:pps}]
For any node $k\in V$ and its ancestor set $\anc$, and node $\ell\in \anc$, \eqref{eq:mi:bound} implies
\[
\sup_{\anc'\subseteq \anc, |\anc'|\le \mbsize_{\anc k}}\truepr\left[\left|\widehat{\mi}(X_k;X_\ell\given \anc')-\mi(X_k; X_\ell\given \anc')\right|\ge t\right]
\les \frac{\delta_{\mbsize_{\anc k}}^2}{t^2}.
\]
For the first step, with probability $1-|\anc| \delta^2_{\mbsize_{\anc k}}/t^2$ we have for all $X_\ell\in \anc$
\[
\left|\widehat{I}(X_k; X_{\ell})-I(X_k; X_\ell)\right| < t
\]
Thus for all $X_{\ell'}\in \anc\setminus\mkvbdy_{\anc k}$,
\[
\widehat{I}(X_k; X_{\ell^*})-\widehat{I}(X_k; X_{\ell'}) > \tilde{\entgap}_{\anc k}-2t
\]
where $\ell^*=\argmax_{i:X_i\in \mkvbdy_{\anc k}} I(X_k;X_i)$. So we only need $t< \tilde{\entgap}_{\anc k}/2$ to ensure we include a node in $\mkvbdy_{\anc k}$ rather than in $\anc\setminus\mkvbdy_{\anc k}$. Following the same argument, when we have found a proper subset $m\subsetneq \mkvbdy_{\anc k}$, with probability $1-\left(|\anc|-|m|\right) \delta^2_{\mbsize_{\anc k}}/t^2$ we have for all $X_\ell\in \anc\setminus m$
\[
\left|\widehat{I}(X_k; X_{\ell}\given m)-I(X_k; X_\ell\given m)\right| < t.
\]
Thus for all $X_{\ell'}\in \anc\setminus\mkvbdy_{\anc k}$
\[
\widehat{I}(X_k; X_{\ell^*}\given m)-\widehat{I}(X_k; X_{\ell'}\given m) > \tilde{\entgap}_{\anc k}-2t
\]
where $\ell^*=\argmax_{i:X_i\in \mkvbdy_{\anc k}\setminus m} I(X_k;X_i\given m)$. So we only need $t\le \tilde{\entgap}_{\anc k}/2$ to ensure we do not include any nodes from $\anc\setminus\mkvbdy_{\anc k}$. At the same time, 
\[
\widehat{I}(X_k; X_{\ell^*}\given m)> I(X_k; X_{\ell^*}\given m)- t \ge 2\ppsgap_{\anc k}-t
\]
To avoid triggering the threshold, we need 
\[
\widehat{I}(X_k; X_{\ell^*}\given m)> 2\ppsgap_{\anc k}-t \ge \ppsthresh.
\]
So $t \le \ppsthresh$ will do the job. After at most $\mbsize_{\anc k}$ steps, we have recovered $\mkvbdy_{\anc k}$, then requiring $t \le \ppsthresh$ will trigger the stopping criterion. Since for any $X_{\ell'}\in \anc\setminus\mkvbdy_{\anc k}$,
\[
\left|\widehat{I}(X_k; X_{\ell'}\given \mkvbdy_{\anc k})-I(X_k; X_{\ell'}\given \mkvbdy_{\anc k})\right|=\widehat{I}(X_k; X_{\ell'}\given \mkvbdy_{\anc k}) < t \le \ppsthresh.
\]
Thus in conclusion, we can recover $\mkvbdy_{\anc k}$ for $X_k$ with probability
\begin{align*}
    \truepr(\wh{\mkvbdy}_{\anc k}=\mkvbdy_{\anc k})&\ge \prod_{i=0}^{\mbsize_{\anc k}}\left(1- \left(|\anc|-|i|\right) \frac{\delta^2_{\mbsize_{\anc k}}}{t^2}\right) \\
    & \ge 1 - \left((\mbsize_{\anc k}+1)|\anc| -\frac{\mbsize_{\anc k}(\mbsize_{\anc k}+1)}{2}\right)\frac{\delta^2_{\mbsize_{\anc k}}}{t^2} \\
    & \ge 1 - (\mbsize_{\anc k}+1)|\anc| \frac{\delta^2_{\mbsize_{\anc k}}}{t^2}.
\end{align*}
Furthermore, by combining this with \eqref{eq:condent:bound}, we can estimate the conditional entropy as
\begin{align*}
    & \truepr\left[\left|\widehat{\ent}(X_k\given \anc)-\ent(X_k\given \anc)\right|<t', \wh{\mkvbdy}_{\anc k}=\mkvbdy_{\anc k}\right]\\
    =& \truepr(\wh{\mkvbdy}_{\anc k}=\mkvbdy_{\anc k})\truepr\left[\left|\widehat{\ent}(X_k\given \anc)-\ent(X_k\given \anc)\right|<t' \given \wh{\mkvbdy}_{\anc k}=\mkvbdy_{\anc k}\right]\\
    =& \truepr(\wh{\mkvbdy}_{\anc k}=\mkvbdy_{\anc k})\truepr\left[\left|\widehat{\ent}(X_k\given \mkvbdy_{\anc k})-\ent(X_k\given \mkvbdy_{\anc k})\right|<t'\right]\\
    \ge& \Big(1 - (\mbsize_{\anc k}+1)|\anc| \frac{\delta^2_{\mbsize_{\anc k}}}{t^2}\Big) \Big(1-\frac{\delta^2_{\mbsize_{\anc k}}}{{t'}^2}\Big)\\
    \ge& \Big(1 - (\mbsize_{\anc k}+1)|\anc| \frac{\delta^2_{\mbsize_{\anc k}}}{t^2}-\frac{\delta^2_{\mbsize_{\anc k}}}{{t'}^2}\Big)
\end{align*}
Let $t'=t \le \min(\ppsthresh, \tilde{\entgap}_{\anc k}/2)$,
then we have
\[
\truepr\left[\left|\widehat{\ent}(X_k\given \anc)-\ent(X_k\given \anc)\right|<t, \wh{\mkvbdy}_{\anc k}=\mkvbdy_{\anc k}\right]\ge 1- (\mbsize_{\anc k}+2)|\anc| \frac{\delta^2_{\mbsize_{\anc k}}}{t^2}
\]
The argument above holds for all $X_k\in V$ and its ancestor set $\anc$, which completes the proof.
\end{proof}

\subsection{Proof of Theorem~\ref{thm:main}}\label{app:samcom:main}
Now we are ready to prove the main theorem on the sample complexity of Algorithm~\ref{alg:uneq:withpps}.
\medskip
\begin{proof}[Proof of Theorem~\ref{thm:main}]

Define events by $\mathcal{E}_0=\emptyset$ and
\[
\mathcal{E}_{j}=\left\{\wh{\layer}_j=\layer_j \AND \wh{\mkvbdy}_{(j-1) k}=\mkvbdy_{(j-1) k} \text{ for } k\in\layer_j\right\}
\]
for $j=1,\ldots,r$. Then
 \[
 \truepr(\wh{\gr}=\gr) = \prod_{j=1}^{\depth}\truepr(\mathcal{E}_{j} \given \mathcal{E}_{j-1}).
 \]
 For the first step, with probability $1-d\delta_{0}^2/t^2$ we have for all $k=1,\cdots,d$
\[
|\ent(X_k)-\widehat{\ent}(X_k)|< t 
\]
which implies for $k\notin \layer_1$ and its corresponding $X_i$
\begin{align*}
\widehat{\ent}(X_k)  - \widehat{\ent}(X_i) >  \Delta - 2t
\end{align*}
With $t \le \Delta / 2$, we have $X_i$ comes before $X_k$ in the order $\widehat{\tau}$ of estimated marginal entropies. Conducting the TAM step, for each $X_i$ comes forst in the ordering $\widehat{\tau}$, we estimate the mutual information $\mi(X_i;X_k)$ for all remaining nodes $X_k$ in $\widehat{\tau}$. With probability at least $1 - d\delta^2_2/ t^2$, we have for all remaining $k$
\begin{align*}
    |\mi(X_i;X_k) - \widehat{\mi}(X_i;X_k)| < t
\end{align*}
Therefore,
\begin{align*}
    \begin{cases}
    \mi(X_i;X_k) < 2t & X_i \text{ is not the ancestor of }X_k \\
    \mi(X_i;X_k) > \unentgap - 2t & X_i \text{ is the ancestor of }X_k
    \end{cases}
\end{align*}
With $\unentthresh:=t\le \unentgap/2$, $X_k$ would be masked. There are at most $d_1:=|\layer_1|$ many $X_i$'s thus there is at most $d_1\times d$ mutual information need to be estimated correctly. So with probability at least $1-d_1d\delta^2_2/t^2$, the TAM step succeeds recovering $\layer_1$. And then
\[
\truepr(\mathcal{E}_1) \ge 1 - \frac{d\delta^2_0}{\entgap^2/4} - \frac{d_1d\delta^2_2}{\unentthresh^2}
\]
Following the same argument, after $j$ loops, given the layers in $\anc_j$ are correctly identified, we invoke Proposition~\ref{prop:pps} with $\anc=\anc_j$ to have for all $k\in V\setminus\anc_{j}$, with probability at least
\[
1-\Big(\sum_{s=j+1}^{\depth}d_s\Big) \Big(2\mbsize_j\sum_{s=1}^jd_s \frac{\delta_{\mbsize_j}^2}{t^2}\Big),
\]
using PPS procedure to estimate the conditional entropeis and Markov boundaries gives
\[
|\widehat{\ent}(X_k\given \anc_{j})-\ent(X_k\given \anc_{j})|< t\ \  \AND \ \ \wh{\mkvbdy}_{jk} = \mkvbdy_{jk}.
\]
which implies for $k\notin \layer_{j+1}$ and its corresponding $X_i$,
\begin{align*}
    \widehat{\ent}(X_k\given \anc_j) - \widehat{\ent}(X_i\given \anc_j) > \Delta - 2t
\end{align*}
So $t\le \Delta/2$ amounts to $X_i$ coming before $X_k$ in the order $\widehat{\tau}$ of estimated conditional entropies of remaining nodes. Note that $t$ also needs to satisfy $t\le \min_k(\ppsthresh, \widetilde{\entgap}_{jk}/2)$. Conducting TAM step, estimate the conditional mutual information using the identity
\begin{align*}
    \mi(X_k;X_i\given \anc_j) &= \ent(X_k\given \anc_j) - \ent(X_k\given X_i,\anc_j)\\
    &= \ent(X_k\given \mkvbdy_{jk}) - \ent(X_k\given X_i,\mkvbdy_{jk})
\end{align*}
Invoking \eqref{eq:condent:bound}, since $\mkvbdy_{jk}$ is already identified, with probability at least $1 - d\delta^2_{\mbsize_j}/ t^2$, we have for all remaining $k$
\begin{align*}
    |\mi(X_i;X_k) - \widehat{\mi}(X_i;X_k)| < t
\end{align*}
Therefore,
\begin{align*}
    \begin{cases}
    \mi(X_i;X_k) < 2t & X_i \text{ is not the ancestor of }X_k \\
    \mi(X_i;X_k) > \unentgap - 2t & X_i \text{ is the ancestor of }X_k
    \end{cases}
\end{align*}
With $\unentthresh :=t\le \unentgap/2$, $X_k$ would be masked while other nodes remains unmasked. There are at most $d_{j+1}:=|\layer_{j+1}|$ manny $X_i$'s thus there is at most $d_{j+1}\times d$ conditional mutual information need to be estimated correctly. So with probability at least $1-d_{j+1}d\delta^2_{\mbsize_j}/t^2$, the TAM step succeeds recovering $\layer_{j+1}$. Combine the PPS step and TAM step together, 
\[
\truepr(\mathcal{E}_{j+1}\given \mathcal{E}_j) \ge 1 - \frac{d_{j+1}d\delta^2_{\mbsize_j}}{\unentthresh^2} - \frac{2\mbsize_j d^2\delta^2_{\mbsize_j}}{\bigg(\min_k(\entgap/2,\ppsthresh, \widetilde{\entgap}_{jk}/2)\bigg)^2}
\]
In conclusion, we have
\begin{align*}
\truepr(\wh{\gr}=\gr) 
&= \prod_{j=0}^{\depth-1}\truepr(\mathcal{E}_{j+1}\given \mathcal{E}_j) \\
&\ge \left(1 - \frac{d\delta^2_0}{\entgap^2/4} - \frac{d_1d\delta^2_2}{\unentthresh^2} \right) \prod_{j=1}^{\depth-1}\left( 1 -  \frac{d_{j+1}d\delta^2_{\mbsize_j}}{\unentthresh^2} - \frac{2\mbsize_j d^2\delta^2_{\mbsize_j}}{\bigg(\min_k(\entgap/2,\ppsthresh, \widetilde{\entgap}_{jk}/2)\bigg)^2} \right)\\
&\ge  1 - \left(1 - \frac{d\delta^2_0}{\entgap^2/4} - \frac{d_1d\delta^2_2}{\unentthresh^2}\right) - \sum_{j=1}^{\depth-1}\left(  \frac{d_{j+1}d\delta^2_{\mbsize_j}}{\unentthresh^2} - \frac{2\mbsize_j d^2\delta^2_{\mbsize_j}}{\bigg(\min_k(\entgap/2,\ppsthresh, \widetilde{\entgap}_{jk}/2)\bigg)^2}
\right) \\
&\ge 1 - \frac{4\mbsize \depth d^2}{\bigg(\min_{jk}(\entgap/2,\unentthresh,\ppsthresh, \widetilde{\entgap}_{jk}/2)\bigg)^2}\delta^2_\mbsize
\end{align*}
where $\unentthresh \le \unentgap/2$ and $\ppsthresh \le \min_{jk}\ppsgap_{jk}$. Since $\mbsize\lesssim \log d$, $\delta_\mbsize^2$ (cf. \eqref{eq:def:delta}) is dominated by its second term. Requiring $\truepr(\wh{\gr}=\gr) > 1-\epsilon$, we have the final result
\begin{align*}
n &\gtrsim \frac{d^2\depth\log^{3} d}{\bigg(\min_{jk}(\entgap/2,\unentthresh,\ppsthresh, \widetilde{\entgap}_{jk}/2)\bigg)^2\epsilon}.
\end{align*}
This completes the proof.

\end{proof}

\subsection{Tuning parameters}
\label{app:tuning}

In practice, the quantities $\unentgap,\ppsgap_{jk}$ needed in Theorem~\ref{thm:main} may not be known. Theorem~\ref{thm:tuning} below remedies this by prescribing data-dependent choices for $\unentthresh$ and $\ppsthresh$:
\begin{theorem}\label{thm:tuning}
Suppose the conditions in Theorem~\ref{thm:main} are satisfied with the stronger sample size requirement
\begin{align*}
n\gtrsim \frac{d^3\log^{3} d}{\epsilon^2 \wedge (\entgap^*_{\unentgap/2,\ppsgap_{jk}})^4},
\end{align*}
where $\entgap^*_{\unentgap/2,\ppsgap_{jk}}$ is defined in Theorem~\ref{thm:main} with $\unentgap/2$ and $\ppsgap_{jk}$ plugged in. By choosing %
\begin{align*}
\unentthresh = \ppsthresh \asymp (d^3\log d)^{1/4}\Big[\Big(\frac{d}{n\log d}\Big)^2 +\frac{\log d}{\sqrt{n}}\Big]^{1/4}
\end{align*}
in Algorithm~\ref{alg:uneq:withpps}, we have $\widehat{\gr}=\gr$ with probability $1-\epsilon$.
\end{theorem}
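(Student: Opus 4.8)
The plan is to re-run the inductive, layer-by-layer analysis from the proof of Theorem~\ref{thm:main} essentially verbatim, changing only the thresholds: the oracle choices $\unentthresh\le\unentgap/2$ and $\ppsthresh\le\min_{jk}\ppsgap_{jk}$ require the unknown gaps, whereas here I substitute the prescribed data-dependent value of $\unentthresh=\ppsthresh$. Recall that in that proof the gaps enter in exactly one way: they provide the \emph{upper} end of a two-sided sandwich on the thresholds. Concretely, recovery of each layer succeeds as long as (i) the thresholds exceed the estimation-noise scale, so that spurious tests with zero conditional mutual information are correctly discarded, and (ii) the thresholds lie below the signal gaps $\unentgap,\ppsgap_{jk}$ (and below $\entgap/2$ and $\widetilde{\entgap}_{jk}/2$), so that genuine dependencies are detected and the entropy ordering is correct. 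The estimation tolerance is then taken to be $t=\entgap^*_{\unentthresh,\ppsthresh}$, and the union bound over all tests, via the uniform Chebyshev estimates \eqref{eq:condent:bound} and \eqref{eq:mi:bound}, controls the total failure probability by a constant times $(\text{number of tests})\cdot\delta_{\mbsize}^2/t^2$.

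The key observation is that the prescribed threshold is, up to constants, $\unentthresh=\ppsthresh\asymp(N\delta_{\mbsize}^2)^{1/4}$, where $\delta_{\mbsize}^2$ is the per-estimate error scale of \eqref{eq:def:delta} with conditioning sets of size $\mbsize\les\log d$, and $N\asymp\mbsize\depth d^2\les d^3\log d$ bounds the total number of conditional-entropy and mutual-information estimates performed across the $\depth$ layers. Both $N$ and $\delta_{\mbsize}$ depend only on $(n,d)$, so this value is computable without knowledge of the gaps or of $\epsilon$. The exponent $1/4$ is chosen precisely so that $\unentthresh$ sits at the geometric-mean scale $\unentthresh^2\asymp\sqrt{N}\,\delta_{\mbsize}$ between the noise floor $\delta_{\mbsize}$ and the (unknown) signal gaps.

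I would then carry out two verifications. First, \emph{correctness}: since $\unentthresh^4\asymp N\delta_{\mbsize}^2\asymp d^3\log^3 d/n$, the inequality $\unentthresh=\ppsthresh\le G:=\entgap^*_{\unentgap/2,\ppsgap_{jk}}$ is equivalent to $n\gtrsim d^3\log^3 d/G^4$, which is one half of the sample-size hypothesis; this places the thresholds below all the relevant gaps, so that $\entgap^*_{\unentthresh,\ppsthresh}=\unentthresh$ and the masking and PPS-selection steps behave exactly as in the oracle proof. Second, \emph{high probability}: reusing \eqref{eq:condent:bound}, \eqref{eq:mi:bound} and the union bound over all $N$ tests with tolerance $t=\unentthresh$, the failure probability is at most a constant times $N\delta_{\mbsize}^2/\unentthresh^2=(N\delta_{\mbsize}^2)^{1/2}\asymp\unentthresh^2$, and demanding this be $\le\epsilon$ is equivalent to $n\gtrsim d^3\log^3 d/\epsilon^2$, the other half. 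Combining the two regimes through the $\wedge$ yields the stated requirement $n\gtrsim d^3\log^3 d/(\epsilon^2\wedge G^4)$, and the Theorem~\ref{thm:main} induction then gives $\estgr=\gr$ with probability $1-\epsilon$.

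The main obstacle is that the threshold is blind to both $\epsilon$ and the gaps, so it cannot be set at the oracle-optimal level $\asymp\delta_{\mbsize}\sqrt{N/\epsilon}$; committing to a single $(n,d)$-only scale forces it to simultaneously clear the noise floor and remain under the signal gaps. The self-referential identity ``failure probability $\asymp\unentthresh^2$'' is exactly what makes the geometric-mean choice resolve this tension, and the delicate part is checking that this one choice closes both the upper constraint $\unentthresh\le G$ and the aggregate-variance constraint at once, producing the clean split into the $\epsilon^2$ and $G^4$ regimes. Everything else is an unchanged repeat of the inductive argument and the uniform error bounds already established for Theorem~\ref{thm:main}.
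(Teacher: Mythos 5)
Your proposal is correct and follows essentially the same route as the paper: choose $\unentthresh=\ppsthresh=t$ with $t^4\asymp d^3\mbsize\,\delta_{\mbsize}^2\asymp d^3\log^3 d/n$, so that the constraint $t\le\entgap^*_{\unentgap/2,\ppsgap_{jk}}$ yields the $G^4$ regime while the union-bound failure probability $\lesssim d^3\mbsize\,\delta_{\mbsize}^2/t^2\asymp t^2$ yields the $\epsilon^2$ regime, after which the Theorem~\ref{thm:main} induction applies unchanged. Your ``geometric-mean'' framing of the threshold and the explicit split into the two constraints is exactly the computation the paper performs, just stated more transparently.
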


\begin{proof}
In the proof of Proposition~\ref{prop:pps}, for $X_k\in\layer_{j+1}$, we need $t$ to be small enough when doing estimation such that
\begin{align*}
    0 & \le \tilde{\entgap}_{jk} - 2t
    \quad\AND\quad
    t \le \ppsthresh \le 2\ppsgap_{jk} - t.
\end{align*}
Similarly, in the proof of Theorem~\ref{thm:main} we need 
\[
\unentthresh = t \le \unentgap/2.
\]
Though the $t$'s here are different, we can take the minimum of them, then it suffices to take
\[
\unentthresh=\ppsthresh = t=  (d^3\mbsize)^{1/4}\delta^{1/2}_{\mbsize}
\]
and require
\[
(d^3\mbsize)^{1/4}\delta^{1/2}_{\mbsize}\le \min_{jk}(\entgap/2,\unentgap/2,\ppsgap_{jk}, \tilde{\entgap}_{jk}/2).
\]
Then we have 
\begin{align*}
    \truepr(\wh{\gr}=\gr)&\ge 1 - \frac{4d^2\depth \mbsize \delta_\mbsize^2}{\bigg(\min_{jk}(\entgap/2,\unentthresh,\ppsthresh, \widetilde{\entgap}_{jk}/2)\bigg)^2}\\
    & \ge 1 - \frac{4d^3 \mbsize \delta_\mbsize^2}{\bigg(\min_{jk}(\entgap/2,\unentthresh,\ppsthresh, \widetilde{\entgap}_{jk}/2)\bigg)^2}\\
    &\ge 1 - 4d\sqrt{d\mbsize}\delta_{\mbsize}.
\end{align*}
Plugging in the exact form of $\delta_\mbsize$ with $\mbsize\lesssim \log d$, the final sample complexity is as desired.
\end{proof}

\section{Proof of Theorem \ref{thm:poly}}\label{app:poly}
First, we need a lemma similar to Lemma~\ref{lem:mi}:
\begin{lemma}\label{lem:mi:mb}
 $I(\rv_{k};\mkvbdy_{jk}\given \anc)>0$ for any subset $\anc\subset\anc_j$ such that $\mkvbdy_{jk}\setminus\anc\ne\emptyset$.
\end{lemma}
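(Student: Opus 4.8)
The plan is to argue by contradiction, in close analogy with the proof of Lemma~\ref{lem:mi}, but replacing the parent set with the Markov boundary and leaning on the semi-graphoid axioms together with the intersection property, which is available here since $\truepr$ is strictly positive. First I would reduce the mutual-information statement to a conditional-independence statement. Writing $W := \mkvbdy_{jk}\setminus\anc$, which is nonempty by hypothesis, and noting that $m := \mkvbdy_{jk}\cap\anc$ is already contained in the conditioning set $\anc$, the chain rule gives $I(\rv_k;\mkvbdy_{jk}\given\anc) = I(\rv_k;W\given\anc)$. Hence it suffices to show $\rv_k\not\indep W\given\anc$, so I would assume the contrary, $\rv_k\indep W\given\anc$, with the goal of contradicting the minimality of $\mkvbdy_{jk}=\MB(\rv_k;\anc_j)$.

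Next I would fix the decomposition $\anc = m\cup R$ with $R:=\anc\setminus\mkvbdy_{jk}$, and set $T:=\anc_j\setminus\mkvbdy_{jk}$, so that $R\subseteq T$ and $\mkvbdy_{jk}=m\cup W$. The defining property of the Markov boundary is $\rv_k\indep T\given\mkvbdy_{jk}$, i.e. $\rv_k\indep T\given(m,W)$; by decomposition this yields $\rv_k\indep R\given(m,W)$. Combining this with the contradiction hypothesis, rewritten as $\rv_k\indep W\given(m,R)$, via the intersection property with conditioning base $m$ produces $\rv_k\indep(W,R)\given m$, and in particular $\rv_k\indep W\given m$. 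A final application of contraction to $\rv_k\indep W\given m$ and $\rv_k\indep T\given(m,W)$ gives $\rv_k\indep(W\cup T)\given m$. Since $W\cup T=\anc_j\setminus m$, this says that $m$ is a Markov blanket of $\rv_k$ in $\anc_j$; but $m\subsetneq\mkvbdy_{jk}$ because $W\ne\emptyset$, contradicting minimality and completing the proof.

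The main obstacle is the step that removes the ``nuisance'' nodes $R\subseteq\anc\setminus\mkvbdy_{jk}$ from the conditioning set: the hypothesis conditions on all of $\anc = m\cup R$, whereas the minimality of the Markov boundary is a statement only about conditioning on subsets of $\mkvbdy_{jk}$ itself. Bridging this gap is exactly where positivity enters, through the intersection property, which from $\rv_k\indep W\given(m,R)$ and $\rv_k\indep R\given(m,W)$ deduces $\rv_k\indep(W,R)\given m$. This is the only non-semi-graphoid axiom invoked and is the heart of the argument; everything else (the mutual-information reduction, decomposition, and contraction) is routine, so once this intersection step is in place the contradiction with minimality follows immediately.
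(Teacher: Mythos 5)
Your proof is correct and follows essentially the same route as the paper: the paper's argument (which simply repeats the proof of Lemma~\ref{lem:mi} with $\pa(k)$ replaced by $\mkvbdy_{jk}$ and minimality of $\gr$ replaced by minimality of $\mkvbdy_{jk}$) is precisely an explicit, positivity-based verification of the intersection step you invoke, followed by the same contraction to exhibit a strictly smaller Markov blanket of $\rv_k$ in $\anc_j$. The only difference is presentational: you cite the semi-graphoid and intersection axioms abstractly, where the paper carries out the corresponding conditional-probability manipulations by hand.
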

The proof follows the one for Lemma~\ref{lem:mi} in Appendix~\ref{app:mi}. To see this, we can simply replace $\pa(k)$ in the arguments with $\mkvbdy_{jk}$, and replace the minimality of $\gr$ with minimality of $\mkvbdy_{jk}$. We will also need the following lemma:
\begin{lemma}
For any $X_c\in \mkvbdy_{jk}$, there is at least one directed path from $X_c$ to $X_k$.
\end{lemma}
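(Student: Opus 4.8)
The plan is to prove the stronger statement that $\mkvbdy_{jk} = \MB(X_k;\anc_j) \subseteq \an(X_k)$; since every element of $\an(X_k)$ is by definition joined to $X_k$ by a directed path, the lemma follows at once. Because $\truepr$ is strictly positive, the intersection property holds, so the Markov boundary is unique (Lemma~\ref{lem:mb:unique}) and is contained in \emph{every} Markov blanket of $X_k$ relative to $\anc_j$ (the intersection of two blankets is again a blanket). Hence it suffices to exhibit a single Markov blanket contained in $\an(X_k)$, and I propose the natural candidate $W := \an(X_k)\cap\anc_j$. Note first that, since $\anc_j$ is ancestral and $X_k\notin\anc_j$, no descendant of $X_k$ lies in $\anc_j$: if $X_d\in\de(X_k)\cap\anc_j$ then $X_k\in\an(X_d)\subseteq\anc_j$, a contradiction.

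To verify that $W$ is a Markov blanket, i.e. $X_k\indep(\anc_j\setminus W)\given W$, I would invoke the moralization criterion for $d$-separation on the set $S := \anc_j\cup\an(X_k)\cup\{X_k\}$, which is ancestral as a union of ancestral sets: the required independence holds iff $X_k$ is separated from $\anc_j\setminus W$ by $W$ in the moral graph $(\gr_S)^{\mathrm{mor}}$ of the induced subgraph $\gr_S$. Writing the partition $S\setminus\{X_k\} = U\sqcup W\sqcup D$ with $U:=\an(X_k)\setminus\anc_j$ and $D:=\anc_j\setminus\an(X_k)=\anc_j\setminus W$, the crux is to show there is \emph{no} edge of $(\gr_S)^{\mathrm{mor}}$ joining $U$ to $D$. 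For an original directed edge this is immediate from ancestrality: an edge $u\to v$ with $u\in U,\ v\in D$ would force $u\in\an(v)\subseteq\anc_j$, contradicting $u\notin\anc_j$, while $v\to u$ would force $v\in\an(u)\subseteq\an(X_k)$, contradicting $v\notin\an(X_k)$. For a moral (co-parent) edge one argues similarly: a common child $c\in S$ of $u\in U$ and $v\in D$ cannot be $X_k$ (else $v\in\pa(X_k)\subseteq\an(X_k)$), cannot lie in $\an(X_k)$ (else $v\in\an(c)\subseteq\an(X_k)$), and cannot lie in $\anc_j$ (else $u\in\an(c)\subseteq\anc_j$), which exhausts $S\setminus\{X_k\}$.

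Granting this, the connected component of $X_k$ in $(\gr_S)^{\mathrm{mor}}$ after deleting $W$ stays within $U\cup\{X_k\}$: the only neighbors of $X_k$ in the moral graph are its parents (it has no children and hence no co-parents inside $S$, since $S$ contains no descendant of $X_k$), and these lie in $U\cup W$; once $W$ is removed, the absence of $U$--$D$ edges prevents reaching $D$. Thus $X_k$ is separated from $D=\anc_j\setminus W$ by $W$, so $W$ is a Markov blanket, whence $\mkvbdy_{jk}\subseteq W\subseteq\an(X_k)$ and every $X_c\in\mkvbdy_{jk}$ admits a directed path to $X_k$. I expect the main obstacle to be the co-parent case above, as it is the only place one must reason about an auxiliary node $c$ lying outside the two sets in question; the directed-edge and final connectivity steps are routine consequences of ancestrality. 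It is worth emphasizing that this argument uses only that $\anc_j$ is ancestral and that $\truepr$ is positive, and in particular does \emph{not} invoke the polyforest hypothesis of Theorem~\ref{thm:poly}.
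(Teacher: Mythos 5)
Your proposal is correct, but it proves the lemma by a genuinely different route than the paper. The paper argues by contradiction directly at the level of trails: assuming $X_c\in\mkvbdy_{jk}$ has no directed path to $X_k$, it walks through a case analysis of every trail joining $X_c$ to $X_k$ (trails through descendants are blocked by colliders, trails through a common cause are blocked either by that ancestor lying in $\anc_j$ or by an intermediate collider outside $\anc_j$), concludes $X_k\indep X_c\given \anc_j\setminus X_c$, and then drops $X_c$ from the boundary, contradicting minimality. You instead prove the stronger containment $\mkvbdy_{jk}\subseteq\an(X_k)$ by exhibiting $W=\an(X_k)\cap\anc_j$ as a Markov blanket via the moralization criterion on the ancestral set $S=\anc_j\cup\an(X_k)\cup\{X_k\}$, and then invoke the fact (valid under positivity, via the intersection property already used for Lemma~\ref{lem:mb:unique}) that the unique minimal blanket is contained in every blanket. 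Both arguments ultimately rest on positivity and ancestrality of $\anc_j$, and neither uses the polyforest hypothesis, so both correctly establish the lemma in the generality in which it is stated. What your approach buys is a cleaner, more systematic verification: the three-way partition $U\sqcup W\sqcup D$ and the absence of moral edges between $U$ and $D$ replace the paper's somewhat informal trail-chasing (``move $X_a$ one node toward $X_c$ until there is a common cause''), and you get the reusable structural fact $\MB(X_k;\anc_j)\subseteq\an(X_k)$ as a byproduct. What the paper's approach buys is that it stays entirely within the trail-based language of $d$-separation set up in Appendix~\ref{app:gm}, without needing to import the moralization characterization. The only point worth being careful about in your write-up is the final step from ``$W$ is a blanket'' to ``$\mkvbdy_{jk}\subseteq W$'': this does require the closure of blankets under intersection, which holds here by positivity but is worth stating explicitly since the paper only records uniqueness of the minimal blanket.
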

\begin{proof}
If there is no path between $X_c$ and $X_k$, $X_c\indep X_k$, $X_c$ cannot be in $\mkvbdy_{jk}$. If $X_c$ and $X_k$ are only connected by undirected paths, $X_c$ would not be in ancestors or descendants of $X_k$. If some paths are through some descendants (children) of $X_k$, the descendants would serve as colliders to block the paths, so the effective paths must be through the ancestors of $X_k$.

For any undirected path connects them through some ancestor, say $X_a$, if the edge from $X_c$ is not pointing to $X_a$, $X_a$ would serve as a common cause. If it is pointing to $X_a$, we move $X_a$ one node toward $X_c$, until there is a common cause, otherwise $X_c$ is connected to $X_k$ with a directed path. When we find the common cause, if $X_a$ is an ancestor of $X_c$, then it is in $\anc_j$ and block the path if conditioned on. If $X_a$ is not an ancestor of $X_c$, then there must be a change of edge direction, so there exits a collider on the path between $X_a$ and $X_c$. If the collider is not in $\anc_j$, it will block the path; if it is in $\anc_j$, $X_a$ will also be in $\anc_j$, so the path is blocked when $\anc_j$ is conditioned on. As a result of all the cases above,
\[
\truepr(X_k\given \anc_j) = \truepr(X_k\given \anc_j\setminus X_c)  
\]
so $X_c$ cannot be in $\mkvbdy_{jk}$.
\end{proof}

\noindent
Finally we prove the theorem:
\medskip
\begin{proof}[Proof of Theorem \ref{thm:poly}]
Condition~\ref{cond:pps} is constructive for PPS procedure. Hence we prove it by showing that nodes not belong to the desired Markov boundary will not be chosen when conducting PPS on poly-forest. Suppose we have identified $j$-th layer, for any $X_k\in V\setminus\anc_{j}$, we try to find $\mkvbdy_{jk}$. For any node $X_\ell \in \anc_j\setminus \mkvbdy_{jk}$ we should not consider, if $X_\ell$ is disconnected to $X_k$, which means there is no path connecting them together, then $X_k\indep X_\ell\given m$, $I(X_k;X_\ell\given m)=0$ for any $m\subsetneq \mkvbdy_{jk}$, we do not need to worry about it. 

If $X_\ell$ is connected to $X_k$ through a child of $X_k$, then this must be an undirected path. Since if this is a directed path, it will be from $X_k$ to $X_\ell$, which is contradicted to $X_\ell \in \anc_j$. Moreover for this undirected path, since the edge is from $X_k$ to its child at first, there will be a change of direction, which serves as a collider blocking this and the only path connecting $X_k$ and $X_\ell$, thereafter $X_k$ and $X_\ell$ are $d$-separated by any subset $m\subsetneq \mkvbdy_{jk}$ and $X_k\indep X_\ell\given m$, $I(X_k;X_\ell\given m)=0$, we do not need to worry about it.

If $X_\ell$ is connected to $X_k$ through one parent $X_t$ or $X_\ell$ is the parent, we can divide it into two situations, whether there is $X_c\in \mkvbdy_{jk}$ on the path or not. If there is one $X_c$ on the path, since $X_c$ has exactly one directed path leading to $X_k$, this will be part of the path connecting $X_\ell$ and $X_k$. For two edges of $X_c$ on this path, $X_c$ has an edge out on the path toward $X_k$. For the other edge, no matter the direction is, either in or out, $X_c$ will block the path if conditioned on and $d$-separates $X_k$ and $X_\ell$. Then we have for any $m\subsetneq \mkvbdy_{jk}$
\[
X_k\indep X_{\ell}\given X_c,m
\]
Using this property and decomposition
\begin{align*}
    I(X_k;X_c,X_{\ell}\given m)&=I(X_k;X_c\given X_{\ell}, m) + I(X_k;X_{\ell}\given m)\\
     &= I(X_k;X_{\ell}\given X_c,m) + I(X_k;X_c\given m)=I(X_k;X_c\given m)
\end{align*}
So 
\[
I(X_k;X_c\given m)=I(X_k;X_c\given X_{\ell},m) + I(X_k;X_{\ell}\given m) > I(X_k;X_{\ell}\given m)
\]
For the last inequality, we trigger Lemma~\ref{lem:mi:mb}. So we will not select $X_{\ell}$ at any step of PPS procedure. 

If there is no $X_c$ on the path, then for any $X_c\in \mkvbdy_{jk}$, since $X_c$ is connected to $X_k$ through a directed path, and $X_\ell$ is connected to $X_k$ through one parent $X_t$ or $X_\ell$ is the parent, then $X_c$ and $X_\ell$ is connected by and only by the combination of two paths $X_c\rightarrow \cdots \rightarrow X_k$ and $X_\ell -\cdots - X_t\rightarrow X_k$. If these two paths converge at $X_k$, $X_k$ will serve as a collider to block this path. If they converge at some node before $X_k$, denoted as $X_{u}$, which is not in $\mkvbdy_{jk}$. $X_u$ cannot be $X_\ell$ otherwise it will $d$-separate $X_k$ and $X_c$ such that $X_c\notin \mkvbdy_{jk}$. Since the path $X_c\rightarrow \cdots \rightarrow X_k$ is directed, the edge on this path from $X_c$ is pointing to $X_u$. If the edge from $X_\ell$ is also pointing to $X_u$, $X_u$ will serve as a collider to block this path between $X_\ell$ and $X_c$. So $X_c$ and $X_\ell$ are $d$-separated by empty set in these two cases, furthermore $\mkvbdy_{jk}\indep X_\ell$. Since $X_\ell\indep X_k \given \mkvbdy_{jk}$. Therefore we have 
\begin{align*}
    \truepr(X_\ell, X_k,\mkvbdy_{jk}) &= \truepr(X_\ell,X_k\given \mkvbdy_{jk})\truepr(\mkvbdy_{jk}) \\
    &= \truepr(X_\ell \given \mkvbdy_{jk})\truepr(X_k\given \mkvbdy_{jk})\truepr(\mkvbdy_{jk}) \\
    &= \truepr(X_\ell) \truepr(X_k, \mkvbdy_{jk})
\end{align*}
Hence $X_\ell \indep (X_k,\mkvbdy_{jk})$ and for any $m\subsetneq \mkvbdy_{jk}$
\begin{align*}
    0 &= I(X_\ell; (X_k,\mkvbdy_{jk}))\\
    &= I(X_\ell;X_k\given m) + I(X_\ell; m) + I(X_\ell; \mkvbdy_{jk}\setminus m\given X_k,m)
\end{align*}
By non-negativity of conditional mutual information, we have $I(X_\ell;X_k\given m)=0$. So we don not need to worry about it either.

If the edge from $X_\ell$ is not pointing to $X_u$ but on the opposite, $X_\ell$ and $X_c$ may not be independent. However, for any other node $X_{c'}\in \mkvbdy_{jk}\setminus X_c$, if $X_\ell$ is connected with $X_{c'}$ through $X_k$, $X_k$ serves as a collider to block the path. If $X_\ell$ is connected with $X_{c'}$ through $X_c$, conditioning on $X_c$ will block the path. Therefore, $X_c$ $d$-separates $X_\ell$ and $X_{c'}$ so $X_\ell \indep (\mkvbdy_{jk}\setminus X_c) \given X_c$. As a result,
\begin{align*}
\truepr(X_\ell, \mkvbdy_{jk}\setminus X_c\given X_c) =& \truepr(X_\ell \given X_c)\truepr(\mkvbdy_{jk}\setminus X_c\given X_c)\\
\truepr(X_\ell \given X_c) =& \truepr(X_\ell \given \mkvbdy_{jk})
\end{align*}
Thus
\begin{align*}
    \truepr(X_\ell, X_k, \mkvbdy_{jk}) = & \truepr(X_\ell\given \mkvbdy_{jk})\truepr(X_k\given \mkvbdy_{jk})\truepr(\mkvbdy_{jk})\\
    =& \truepr(X_\ell \given X_c)\truepr(X_k, \mkvbdy_{jk})
\end{align*}
Meanwhile,
\[
     \truepr(X_\ell, X_k, \mkvbdy_{jk}) = \truepr(X_\ell\given X_k,\mkvbdy_{jk})\truepr(X_k,\mkvbdy_{jk})
\]
Hence
\[
\truepr(X_\ell\given X_k,\mkvbdy_{jk}) = \truepr(X_\ell \given X_c)\ \ \ \ X_\ell \indep (X_k,\mkvbdy_{jk}\setminus X_c) \given X_c\ \ \ \ I(X_\ell; (X_k,\mkvbdy_{jk}\setminus X_c) \given X_c)=0
\]
By the same decomposition of mutual information, we can have for any subset $m$, $I(X_\ell;X_k \given X_c, m)=0$, thereafter we will not select $X_{\ell}$ at any step of PPS procedure. 
\end{proof}

\section{Condition \ref{cond:uneq:mi:short} and poly-forests}\label{app:lem:poly}

Condition \ref{cond:uneq:mi:short} is a nondegeneracy condition on the distribution $P$ that may be violated, for example, when there is path cancellation. In this appendix, we show that a sufficient (but not necessary) condition for Condition \ref{cond:uneq:mi:short} is that there exists at most one directed path between any two nodes in the graph, in other words, when path cancellation is impossible.

\begin{lemma}\label{lem:poly}
If $\gr$ is a poly-forest, then Condition \ref{cond:uneq:mi:short} holds.
\end{lemma}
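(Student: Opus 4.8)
My plan is to fix $X_k$ with $\layer(X_k)\ge 2$, an index $j\le\layer(X_k)-2$, and an ancestor $X_i\in\an_j(k)$, and to show $I(X_k;X_i\given\anc_j)>0$, i.e.\ that $X_k$ and $X_i$ are conditionally dependent given $\anc_j$. I would separate this into a graphical claim and a probabilistic claim: first that $X_i$ and $X_k$ are \emph{not} $d$-separated by $\anc_j$ in $\gr$, and then that this graphical connection forces genuine dependence once $\gr$ is a poly-forest and $\truepr$ is strictly positive.

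For the graphical step, since $X_i\in\an_j(k)$ is an ancestor of $X_k$, there is a directed path $X_i=V_0\to V_1\to\cdots\to V_m=X_k$. Each internal node $V_t$ ($1\le t\le m-1$) is a proper descendant of $X_i$, so the longest directed path from a source into $V_t$ is strictly longer than the one into $X_i$; as the layer index is exactly this longest-path length, $\layer(V_t)\ge\layer(X_i)+t=j+1+t>j+1$, whence $V_t\notin\anc_j$. The endpoints also avoid the conditioning set: $X_i\in\layer_{j+1}$ is disjoint from $\anc_j$, and $X_k\notin\anc_{j+1}\supseteq\anc_j$ because $j\le\layer(X_k)-2$. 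A directed path contains no $v$-structure, and none of its nodes lies in $\anc_j$, so the path is active given $\anc_j$; hence $X_i$ and $X_k$ are not $d$-separated by $\anc_j$.

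The probabilistic step is where I expect the real difficulty. The defining property of a poly-forest is that its skeleton has no cycles, so the directed path above is the \emph{unique} trail between $X_i$ and $X_k$. This single-connectedness removes the only mechanism --- cancellation between two or more distinct active trails --- by which $d$-connection can fail to produce dependence; the remaining danger is a degenerate, ``exact XOR''-type cancellation along the path itself, which is precisely what strict positivity of $\truepr$ (Corollary~\ref{cor:mi:pos}) is meant to exclude. Concretely, I would argue by induction on the path length $m$: the base case $m=1$ has $X_i\in\pa(k)\setminus\anc_j$ and follows from the non-degeneracy guaranteed by Corollary~\ref{cor:mi:pos}; for the inductive step I would condition on $\anc_j$, factor the conditional law of $(V_1,\dots,V_m)$ as a product of edge transition kernels along the single path --- the off-path parents of each $V_t$ being either fixed by $\anc_j$ or independent by the forest structure --- and show that the composed kernel remains non-constant in $X_i$. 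The crux, and the step I would treat most carefully, is exactly this composition: positive dependence on each individual edge does not by itself compose into endpoint dependence, so the argument must genuinely invoke both the unique-trail structure and strict positivity to rule out accidental cancellation when the off-path noise is marginalized out.
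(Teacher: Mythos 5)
Your graphical setup---reducing to the unique directed path $X_i=V_0\to V_1\to\cdots\to V_m=X_k$, checking that its internal nodes lie outside $\anc_j$, and then viewing the conditional law along the path as a composition of edge kernels---matches the paper's proof, which passes to the subgraph on $V\setminus\anc_j$ and works with the chain $X\to Y_1\to\cdots\to Y_p\to Z$. The problem is that you stop exactly at the step that carries the entire content of the lemma. You correctly observe that ``positive dependence on each individual edge does not by itself compose into endpoint dependence,'' and you propose to exclude cancellation using the unique-trail structure together with strict positivity (via Corollary~\ref{cor:mi:pos}). Those two ingredients are provably insufficient: for variables with three or more states one can construct a strictly positive Markov chain $X\to Y\to Z$ (a polytree, and a minimal I-map) with $X\not\indep Y$ and $Y\not\indep Z$ but $X\indep Z$, by choosing the vector $P(Y=\cdot\given X=x)-P(Y=\cdot)$ orthogonal to both the all-ones vector and $P(Z=1\given Y=\cdot)$. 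So any argument of the shape you describe must fail in general, and the missing idea is the \emph{binary} support assumed throughout the paper. The paper's proof closes the gap with an explicit two-point computation: assuming $X\indep Z$ yields $\sum_{y}P(Z\given Y=y)\bigl[P(Y=y\given X)-P(Y=y)\bigr]=0$, and because $Y$ is binary the bracketed differences at $y=0$ and $y=1$ are negatives of one another, so the identity forces $P(Z\given Y=1)=P(Z\given Y=0)$, contradicting the effectiveness of the last edge; the same telescoping handles longer chains. Nothing in your sketch supplies this step or a substitute for it, and your diagnosis attributes the rescue to the wrong hypothesis.

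A secondary gap sits in your base case. Corollary~\ref{cor:mi:pos} gives $I(X_k;\pa(k)\given\anc_j)>0$ for the \emph{whole} parent set, which does not imply $I(X_k;X_i\given\anc_j)>0$ for an individual parent $X_i$: a node generated as a noisy XOR of two independent fair-coin parents has strictly positive joint distribution and positive mutual information with the parent pair, yet zero mutual information with each parent separately. Passing from the joint statement to the single-coordinate one is therefore another instance of the same unproved marginalization claim, not a consequence of nondegeneracy alone, so both the base case and the inductive step of your plan rest on the piece of the argument that is actually missing.
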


\begin{proof}
Let's drop the subscript and use the notation $X,Z$ for $X_i,X_k$. For poly-forest, there exists and only exists one direct path from ancestor $X$ to descendant $Z$. With loss of generality, let $\anc_j=\emptyset$, since the Markov property of the subgraph $G[V\setminus \anc_j]$ does not change. If $X\in\pa(Z)$, by minimality, all edges are effective, thus $I(X;Z) > 0$.

If the directed path is formed by three nodes $X\to Y \to Z$. Suppose the contrary, $X\indep Z$, we have following:
\begin{align*}
    P(X)P(Z) &= P(X,Z) \\
    & = \sum_{y\in\{0,1\}}P(X,Y=y,Z) \\
    & = P(X)\sum_{y\in\{0,1\}}P(Z\given Y=y)P(Y=y\given X)
\end{align*}
The first equality is by independence, the third one is by Markov property. Then by positivety of the probability, we have
\begin{align*}
    \sum_{y\in\{0,1\}}P(Z\given Y=y)P(Y=y\given X) &= P(Z) \\
    &= \sum_{y\in\{0,1\}}P(Z\given Y=y)P(Y=y)
\end{align*}
After rearrangement,
\[
\sum_{y\in\{0,1\}}P(Z\given Y=y)\bigg[P(Y=y\given X) - P(Y=y)\bigg]=0
\]
Since $X\not \indep Y$, thus $P(Y=y\given X)\ne P(Y=y)$, therefore,
\[
\frac{P(Z\given Y=1)}{P(Z\given Y=0)} = -\frac{P(Y=0\given X) - P(Y=0)}{P(Y=1\given X) - P(Y=1)} = -\frac{1 - P(Y=1\given X) - 1  + P(Y=1)}{P(Y=1\given X) - P(Y=1)} = 1
\]
Thus $P(Z\given Y=1)=P(Z\given Y=0)$ contradicts that $Y\not\indep Z$.

More generally, let the directed path be formed by $X\to Y_1 \to \cdots \to Y_p \to Z$. Suppose $X\indep Z$, we have:
\begin{align*}
    P(X)P(Z) &= P(X,Z) \\
    & = \sum_{y_1\in\{0,1\}}\cdots\sum_{y_d\in\{0,1\}}P(X,Y_1=y_1,\cdots,Y_d=y_d,Z) \\
    & = P(X)\sum_{y_d\in\{0,1\}}\sum_{y_1\in\{0,1\}}P(Z\given Y_d=y    _d)P(Y_d=y_d\given Y_1=y_1)P(Y_1=y_1\given X)
\end{align*}
After rearrangement, 
\[
\frac{P(Z\given Y_d=1)}{P(Z\given Y_d=0)} = - \frac{\sum_{y_1\in \{0,1\} } [P(Y_1=y_1\given X) - P(Y_1=y_1)] P(Y_d=1\given Y_1=y_1) }{\sum_{y_1\in \{0,1\} } [P(Y_1=y_1\given X) - P(Y_1=y_1)] P(Y_d=0\given Y_1=y_1)}
\]
Look at the numerator
\begin{align*}
    &\sum_{y_1\in \{0,1\} } \bigg[P(Y_1=y_1\given X) - P(Y_1=y_1)\bigg] P(Y_d=1\given Y_1=y_1) \\
    =& \sum_{y_1\in \{0,1\} } \bigg[P(Y_1=y_1\given X) - P(Y_1=y_1)\bigg] \bigg[1 - P(Y_d=0\given Y_1=y_1)\bigg] \\
    =& - \sum_{y_1\in \{0,1\} } \bigg[P(Y_1=y_1\given X) - P(Y_1=y_1)\bigg] P(Y_d=0\given Y_1=y_1) \\
    & + \sum_{y_1\in \{0,1\} } P(Y_1=y_1\given X) - \sum_{y_1\in \{0,1\} } P(Y_1=y_1) \\
    =& - \sum_{y_1\in \{0,1\} } \bigg[P(Y_1=y_1\given X) - P(Y_1=y_1)\bigg] P(Y_d=0\given Y_1=y_1)
\end{align*}
is the negative of denominator, thus $P(Z\given Y_d=1)=P(Z\given Y_d=0)$ contradicts that $Z\not\indep Y_d$, which completes the proof.
\end{proof}

\section{Proof of Theorem \ref{thm:nopps}}
\label{app:nopps}
First we bound the sample complexity of the backward phase of IAMB, as shown in Algorithm \ref{alg:iamb}.
\begin{lemma}\label{lem:iamb}
Let $\wh{\mkvbdy}_{jk}$ be the output of Algorithm \ref{alg:iamb} with $\ppsthresh\le\entgap_I/2$ and the plug-in mutual information estimator \eqref{eq:mi:bound}. Then
\[
\truepr(\wh{\mkvbdy}_{jk}=\mkvbdy_{jk}=\pa(k))\ge 1 - \frac{|\anc_j|\delta^2_{|\anc_j|}}{\ppsthresh^2}
\]
where $\delta_{\anc_j}^2$ follows the definition in \eqref{eq:def:delta} with $|S|$ replaced by $|\anc_j|$.
\end{lemma}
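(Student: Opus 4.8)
The plan is to show that the single backward sweep retains every parent and discards every non-parent by uniformly controlling the $|\anc_j|$ conditional mutual information tests. Since the lemma is invoked for a node $k\in\layer_{j+1}$, the population characterization established in Section~\ref{sec:mb} gives $\mkvbdy_{jk}=\MB(X_k;\anc_j)=\pa(k)$, so it suffices to prove that Algorithm~\ref{alg:iamb} returns exactly $\pa(k)$ with the stated probability.

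First I would pin down two population facts about the quantity $I(X_\ell;X_k\given\anc_j\setminus X_\ell)$ that governs each removal test. For $X_\ell\in\pa(k)$, the definition of $\entgap_I$ gives $I(X_\ell;X_k\given\anc_j\setminus X_\ell)\ge\entgap_I>0$, so parents carry strictly positive signal. For $X_\ell\in\anc_j\setminus\pa(k)$, the conditioning set $\anc_j\setminus X_\ell$ is a superset of $\pa(k)$, so the Markov property $X_k\indep(\anc_j\setminus\pa(k))\given\pa(k)$ forces $\truepr(X_k\given\anc_j\setminus X_\ell)=\truepr(X_k\given\pa(k))$; marginalizing $X_\ell$ out then yields $X_k\indep X_\ell\given\anc_j\setminus X_\ell$, i.e. the corresponding conditional mutual information is exactly $0$.

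Next I would apply the uniform estimation bound \eqref{eq:mi:bound}. Each of the at most $|\anc_j|$ tests conditions on a set of size at most $|\anc_j|$, so a union bound gives that, with probability at least $1-|\anc_j|\delta^2_{|\anc_j|}/t^2$, all estimates satisfy $|\wh{I}(X_\ell;X_k\given\anc_j\setminus X_\ell)-I(X_\ell;X_k\given\anc_j\setminus X_\ell)|<t$ simultaneously. Setting $t=\ppsthresh$ and using the hypothesis $\ppsthresh\le\entgap_I/2$, on this event every parent has estimated conditional mutual information exceeding $\entgap_I-\ppsthresh\ge\ppsthresh$ and is therefore kept, while every non-parent has estimated conditional mutual information below $0+\ppsthresh=\ppsthresh$ and is therefore removed. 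Hence $\wh{\mkvbdy}_{jk}=\pa(k)=\mkvbdy_{jk}$ on the good event, which has probability at least $1-|\anc_j|\delta^2_{|\anc_j|}/\ppsthresh^2$, as claimed.

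The main step to get right is the vanishing of the conditional mutual information for non-parents: one must verify that conditioning on the superset $\anc_j\setminus X_\ell\supseteq\pa(k)$, rather than on $\pa(k)$ alone, preserves $X_k\indep X_\ell\given\anc_j\setminus X_\ell$, which follows cleanly from the I-map factorization and positivity. If Algorithm~\ref{alg:iamb} is instead read as an iterative one-at-a-time removal, I would add a short induction showing that the running conditioning set always remains a superset of $\pa(k)$, so the same two population facts persist at every step; either reading yields the identical bound.
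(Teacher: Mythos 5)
Your proposal is correct and follows essentially the same route as the paper's proof: a uniform bound via \eqref{eq:mi:bound} over the $|\anc_j|$ tests, the observation that non-members of $\mkvbdy_{jk}=\pa(k)$ have exactly zero conditional mutual information given $\anc_j\setminus X_\ell$ while parents have at least $\entgap_I$, and then the threshold $\ppsthresh\le\entgap_I/2$ separates the two groups. Your explicit justification of the vanishing CMI for non-parents (via weak union from $X_k\indep(\anc_j\setminus\pa(k))\given\pa(k)$) is a slightly more careful rendering of a step the paper states tersely, but the argument is the same.
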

\begin{proof}
Following the same argument in \eqref{eq:mi:bound}, we have
\[
\sup_{\ell\in\anc_j}\truepr\left(\left|\widehat{I}(X_k;X_\ell\given \anc_j\setminus X_\ell) - I(X_k;X_\ell\given \anc_j\setminus X_\ell)\right| \ge t\right) \le \frac{\delta^2_{|\anc_j|}}{t^2}
\]
Note that for nodes $X_\ell$ not in $\mkvbdy_{jk}$, 
\[
I(X_k;X_\ell\given \anc_j\setminus X_\ell)=I(X_k;X_\ell\given \mkvbdy_{jk}) = 0
\]
Therefore with probability at least $1 - |\anc_j|\delta^2_{|\anc_j|}/t^2$, we have for all $\ell\in \anc_j$
\[
    |\widehat{I}(X_k;X_\ell\given \anc_j\setminus X_\ell) - I(X_k;X_\ell\given \anc_j\setminus X_\ell)| < t
\]
Therefore, for $\ell\in \mkvbdy_{jk}$ and $\ell'\in\anc_j\setminus \mkvbdy_{jk}$,
\[
    \widehat{I}(X_k;X_\ell\given \anc_j\setminus X_\ell) > \entgap_I-t \ \ \ \ \widehat{I}(X_k;X_{\ell'}\given\anc_j\setminus X_{\ell'})<t
\]
Let $t = \ppsthresh \le \entgap_I / 2$, we can remove all $\ell\in \anc_j\setminus \mkvbdy_{jk}$ rather than any one in $\mkvbdy_{jk}$, then the desired Markov boundary is recovered.
\end{proof}
We are now ready to prove Theorem \ref{thm:nopps}.
\begin{proof}[Proof of Theorem \ref{thm:nopps}]
Then we use Lemma~\ref{lem:iamb} and take intersection over all nodes
\[
\truepr\left(\wh{\mkvbdy}_{jk}=\pa(k)\ \ \forall k\in[d]\right)\ge  1 - \frac{d}{\ppsthresh^2}\max_j|\anc_j|\delta^2_{|\anc_j|} \ge 1-\frac{d^2}{\ppsthresh^2}\delta^2_d
\]
where 
\[
\delta_d^2 \asymp \Big(\frac{2^d}{nd}\Big)^2 + \frac{d^2}{n}
\]
The last inequality is due to $|\anc_j|\le |\anc_{\depth-1}| \le d$. Furthermore, use the estimator \eqref{eq:entest} for conditional entropy directly on $\anc_j$ instead of $\mkvbdy_{jk}$,
\[
\sup_{k\in V\setminus\anc_j}\truepr\left(\left|\widehat{\ent}(X_k \given \anc_j) - \ent(X_k\given \anc_j)\right| \ge t\right) \le \frac{\delta_{|\anc_j|}^2}{t^2}.
\]
Following the proof of main theorem in Appendix~\ref{app:main}, we can show that without PPS procedure, we can recover layers with
\[
\truepr(\wh{\layer}=\layer) \ge 1 - \frac{4d\width \depth}{(\entgap/2 \wedge \unentthresh)^2}\max_j\delta_{|\anc_j|}^2 \ge 1 - \frac{4d\width\depth}{(\entgap/2 \wedge \unentthresh)^2}\delta^2_d.
\]
Thus we have recovery for the whole graph with
\begin{align*}
\truepr\left(\wh{\gr}=\gr\right) &= \truepr\left(\wh{\layer}=\layer\right)\truepr\left(\wh{MB}=\pa(k)\ \ \forall k\in[d] \given \wh{\layer}=\layer\right) \\
& \ge 1 - \left(\frac{d^2}{\ppsthresh^2} + \frac{4d\width\depth}{(\entgap/2 \wedge \unentthresh)^2}\right)\delta^2_d\gtrsim 1 - \frac{d\width\depth}{(\min(\ppsthresh, \entgap/2, \unentthresh))^2}\delta^2_d
\end{align*}
Plug in the upper bound of $\delta_d^2$, require for $\truepr(\wh{\gr}=\gr)>1-\epsilon$, we have desired sample complexity.
\end{proof}

\section{Experiment details}\label{app:exp}
We describe the details of experiments conducted in this appendix.

\subsection{Experiment settings}
For graph types, we generate
\begin{itemize}
    \item \textit{Poly-Tree (Tree)}. Uniformly random tree by generating a Pr\"{u}fer sequence with random direction assigned for each edge.
    \item \textit{Erd\H{o}s R\'{e}nyi (ER)}. Graphs whose edges are selected from all possible $\binom{d}{2}$ edges independently with specified expected number of edges. 
    \item \textit{Scale-free networks (SF)}. Graphs simulated according to the Barabasi-Albert model. 
\end{itemize}
For models, we consider the dependency between parents and children. We control the constant conditional entropy $\ent(X_k\given \pa(k))$ for all $k=1,2,\ldots,d$ to satisfy the Condition~\ref{cond:equal}, which implies \ref{cond:uneq:short}. 
We generate data from following models
\begin{itemize}
    \item mod model (MOD): $X_k= (S_k\mod2)^{Z_k}\times (1 - (S_k\mod2))^{1-Z_k}$ where $S_k = \sum_{\ell\in\pa(k)} X_{\ell}$ with $Z_k\sim \BernoulliDist(p)$
    \item additive model (ADD): $X_k = \sum_{\ell\in\pa(k)}X_\ell + Z_k$ with $Z_k\sim\BernoulliDist(p)$
\end{itemize}

Total number of replications is $N=30$. For each of them, we generated random datasets with sample size $n\in\{1000,2000,3000,4000\}$ for graphs with $d \in \{10,20,30,40,50\}$ nodes and $p=0.2$.

\subsection{Implementation and baselines}
We implement our algorithm with entropy estimator proposed in \citet{wu2016minimax}, which is available at \url{https://github.com/Albuso0/entropy}. We treat joint entropy as a multivariate discrete variable to estimate. We fix $\ppsthresh=0.005$ and $\unentthresh=0.001$, in particular, we do no hyperparameter tuning.

We further compare following DAG learning algorithms as baselines:
\begin{itemize}
    \item PC algorithm is standard structure learning approach
    and  The implementation is available at \url{https://github.com/bd2kccd/py-causal}. The independence test is chosen as discrete BIC test \texttt{dis-bic-test}. Remaining parameters are set as default or recommended in tutorial.
    \item Greedy equivalence search (GES) is standard baseline for DAG learning. The implementation is available at \url{https://github.com/bd2kccd/py-causal}. The score is chosen as discrete BIC score \texttt{dis-bic-score}. Remaining parameters are set as default or recommended in tutorial.
\end{itemize}
The experiments were conducted on an internal cluster using an Intel E5-2680v4 2.4GHz CPU with 64 GB memory.

\end{document}